\documentclass[11pt]{amsart}

\pagestyle{myheadings}
\setlength{\textwidth}{6.25in}
\setlength{\textheight}{8.4in}
\setlength{\oddsidemargin}{.15in}
\setlength{\topmargin}{-.2in}

\usepackage{algorithm}
\usepackage{algpseudocode}
\usepackage{epigraph}
\usepackage{mathtools}
\usepackage{latexsym}
\usepackage{mathrsfs}
\usepackage{pstricks}
\usepackage{listings}
\usepackage{hyperref}
\usepackage{pgfplots}
\usepackage{appendix}
\hypersetup{
  unicode=true,
  pdfauthor={},
  pdftitle={Periodic/subharmonic solutions in second order difference equations:
            existence and bifurcation results},
  pdfsubject={},
  pdfkeywords={}; {}
}
\usepackage{pst-all}	%call the pstricks package
\usepackage{mathdots}
\usepackage{xcolor}
\usepackage{enumitem}
\usepackage{todonotes}
\usepackage{amssymb,bm}
\usepackage{amsmath}
\usepackage{amsfonts}
\usepackage{amsthm}
\usepackage{tikz}
\usepackage{epsfig}
\usepackage{verbatim}
\usepackage{float}
\usepackage{tabularx}
\usepackage{bm}
\usepackage{mathtools}
\usepackage{blkarray, bigstrut}
\usepackage{subcaption}

\definecolor{codegray}{rgb}{0.5,0.5,0.5}
\lstdefinestyle{mystyle}{
    %backgroundcolor=\color{backcolour},   
    commentstyle=\color{codegreen},
    keywordstyle=\color{magenta},
    numberstyle=\tiny\color{codegray},
    stringstyle=\color{codepurple},
    basicstyle=\ttfamily\footnotesize,
    breakatwhitespace=false,         
    breaklines=true,                 
    captionpos=b,                    
    keepspaces=true,                 
    numbers=left,                    
    numbersep=5pt,                  
    showspaces=false,                
    showstringspaces=false,
    showtabs=false,                  
    tabsize=2
}
  \lstdefinelanguage{GAP}{
    basicstyle=\ttfamily,
    keywords={true, false, function, return, fail, if, in, while, do, od, else, elif, fi, break, continue},
    keywordstyle=\color{blue}\bfseries,
    otherkeywords={% Operators
      >, <, ==
    },
    breaklines=true,      
    identifierstyle=\color{black},
    sensitive=True,
    comment=[l]{\#},
    commentstyle=\color{cyan},
    stringstyle=\color{red},
    morestring=[b]',
    morestring=[b]"
  }

\lstset{style=mystyle}
\providecommand{\U}[1]{\protect\rule{.1in}{.1in}}

\setlength{\textwidth}{15truecm}
\setlength{\textheight}{21truecm}
\setlength{\oddsidemargin}{.65truecm}
\setlength{\evensidemargin}{.25truecm}
\setlength{\marginparwidth}{2.4truecm}
\setlength{\topmargin}{-.15in}
\newcolumntype{Y}{>{\raggedleft\arraybackslash}X}
\def\bc{{\mathbb{C}}}
\def\bq{{\mathbb{Q}}}
\def\bn{{\mathbb{N}}}

\def\br{{\mathbb{R}}}

\def\bz{{\mathbb{Z}}}

\def\br{\mathbb R}

\def\vs{\vskip.3cm}
\def\noi{\noindent}

\def\gdeg{G\text{\rm -deg}}
\def\Gammadeg{\bz_2^2 \times \Gamma \text{\rm -deg}}
\def\s1deg{S^1\text{\rm -deg}}

\def\Om{\Omega}

\def\sign{\text{\rm sign\,}}

\DeclareMathOperator{\id}{Id}

\newcommand\cV{\ensuremath{\mathcal V}}

\newrgbcolor{violet}{.6 .1 .8}
  \newrgbcolor{lightyellow}{1 1 .8}
  \newrgbcolor{lightblue}{.80 1 1}
  \newrgbcolor{mygreen}{0 .66 .05}
  \definecolor{mygreen}{rgb}{0,.66,.05}
  \definecolor{lightyellow}{rgb}{1,1,.80}
  \newrgbcolor{orange}{1 .6 0}
  \newrgbcolor{GreenYellow}{.85 1 .31}
  \newrgbcolor{Yellow}{1  1  0}
  \newrgbcolor{Goldenrod}{1  .90  .16}
  \newrgbcolor{Dandelion}{1  .71  .16}
  \newrgbcolor{Apricot}{1  .68  .48}
  \newrgbcolor{Peach}{1  .50  .30}
  \newrgbcolor{Melon}{1  .54  .50}
  \newrgbcolor{YellowOrange}{1  .58  0}
  \newrgbcolor{Orange}{1  .39  .13}
  \newrgbcolor{BurntOrange}{1  .49  0}
  \newrgbcolor{Bittersweet}{1.  .4300  .24}
  \newrgbcolor{RedOrange}{1  .23  .13}
  \newrgbcolor{Mahogany}{1.  .4475  .4345}
  \newrgbcolor{Maroon}{1.  .4084  .5376}
  \newrgbcolor{BrickRed}{1.  .3592  .3232}
  \newrgbcolor{Red}{1  0  0}
  \newrgbcolor{OrangeRed}{1  0  .50}
  \newrgbcolor{RubineRed}{1  0  .87}
  \newrgbcolor{WildStrawberry}{1  .04  .61}
  \newrgbcolor{CarnationPink}{1  .37  1}
  \newrgbcolor{Salmon}{1  .47  .62}
  \newrgbcolor{Magenta}{1  0  1}
  \newrgbcolor{VioletRed}{1  .19  1}
  \newrgbcolor{Rhodamine}{1  .18  1}
  \newrgbcolor{Mulberry}{.6668  .1180  1.}
  \newrgbcolor{RedViolet}{.9538  .4060  1.}
  \newrgbcolor{Fuchsia}{.5676  .1628  1.}
  \newrgbcolor{Lavender}{1  .52  1}
  \newrgbcolor{Thistle}{.88  .41  1}
  \newrgbcolor{Orchid}{.68  .36  1}
  \newrgbcolor{DarkOrchid}{.60  .20  .80}
  \newrgbcolor{Purple}{.55  .14  1}
  \newrgbcolor{Plum}{.50  0  1}
  \newrgbcolor{Violet}{.98 .15 .95}
  \newrgbcolor{RoyalPurple}{.25  .10  1}
  \newrgbcolor{BlueViolet}{.84  .38  .98}
  \newrgbcolor{Periwinkle}{.43  .45  1}
  \newrgbcolor{CadetBlue}{.38  .43  .77}
  \newrgbcolor{CornflowerBlue}{.35  .87  1}
  \newrgbcolor{MidnightBlue}{.4414  .9259  1.}
  \newrgbcolor{NavyBlue}{.06  .46  1}
  \newrgbcolor{RoyalBlue}{0  .50  1}
  \newrgbcolor{Blue}{0  0  1}
  \newrgbcolor{Cerulean}{.06  .89  1}
  \newrgbcolor{Cyan}{0  1  1}
  \newrgbcolor{ProcessBlue}{.04  1  1}
  \newrgbcolor{SkyBlue}{.38  1  .88}
  \newrgbcolor{Turquoise}{.15  1  .80}
  \newrgbcolor{TealBlue}{.1572  1.  .6668}
  \newrgbcolor{Aquamarine}{.18  1  .70}
  \newrgbcolor{BlueGreen}{.15  1  .67}
  \newrgbcolor{Emerald}{0  1  .50}
  \newrgbcolor{JungleGreen}{.01  1  .48}
  \newrgbcolor{SeaGreen}{.31  1  .50}
  \newrgbcolor{Green}{0  1  0}
  \newrgbcolor{ForestGreen}{.1992  1.  .2256}
  \newrgbcolor{PineGreen}{.3100  1.  .5575}
  \newrgbcolor{LimeGreen}{.50  1  0}
  \newrgbcolor{YellowGreen}{.56  1  .26}
  \newrgbcolor{SpringGreen}{.74  1  .24}
  \newrgbcolor{OliveGreen}{.6160  1.  .4300}
  \newrgbcolor{RawSienna}{.53  .28  .16}
  \newrgbcolor{Sepia}{1.  .7510  .70}
  \newrgbcolor{Brown}{.41  .25  .18}
  \newrgbcolor{TAN}{.86  .58  .44}
  \newrgbcolor{Gray}{1.  1.  1.}
  \newrgbcolor{Black}{1  1  1}
  \newrgbcolor{White}{1  1  1}

%%%%%%% custom commands
\newcommand{\Mod}[1]{\ (\mathrm{mod}\ #1)}

\newtheorem{theorem}{Theorem}[section]
\newtheorem{proposition}{Proposition}[section]
\newtheorem{lemma}{Lemma}[section]
\newtheorem{corollary}{Corollary}[section]

\newtheorem{definition}{Definition}[section]

\newtheorem{remark}{Remark}[section]

\newtheorem{remark-definition}{Remark and Definition}[section]
\newtheorem{rem-not}{Remark and Notation}[section]

\begin{document}

\title[Global Bifurcation of Time-Dependent Solutions]{Global Bifurcation in Symmetric Systems of Nonlinear Wave Equations}

\author{Carlos Garcia-Azpeitia}\address{Departamento de Matem\'aticas y Mec\'anica, IIMAS-UNAM, Apdo. Postal 20-126, Col. San \'Angel,
Mexico City, 01000,  Mexico}
\email{cgazpe@mym.iimas.unam.mx}

\author{ Ziad Ghanem}\address{Department of Mathematical Sciences, University of Texas at Dallas, Richardson, TX 75080, USA}
\email{Ziad.Ghanem@UTDallas.edu}

\author{Wieslaw Krawcewicz}\address{Department of Mathematical Sciences, University of Texas at Dallas, Richardson, TX 75080, USA}
\email{Wieslaw.Krawcewicz@UTDallas.edu}

\date{}

\maketitle

\begin{abstract}
In this paper, we use the equivariant  degree theory to establish a global bifurcation result for the existence of non-stationary branches of solutions to a nonlinear, two-parameter family of hyperbolic wave equations with local delay and non-trivial damping. As a motivating example, we consider an application of our result to a system of $N$ identical vibrating strings with dihedral coupling relations.
\end{abstract}

\noi \textbf{Mathematics Subject Classification:} Primary: 37G40, 35B06; Secondary: 47H11, 35J57,  35J91

\medskip

\noi \textbf{Key Words and Phrases:} nonlinear wave equation, symmetric bifurcation, unbounded branches,  non-stationary solutions, $S^1$-equivariant degree.

%---

\section{Introduction} \label{sec:intro}
Let $\Gamma$ be a finite group,  $V := \br^N$ an orthogonal $\Gamma$-representation and denote by $L^\Gamma(V)$ the space of all $\Gamma$-equivariant linear transformations of $V$. We consider the following parameterized by $(\alpha, \beta) \in \br \times \br$ system of delayed, hyperbolic equations
\begin{equation}\label{eq:system}
\begin{cases}
\nu^2 \partial^2_t u - \partial^2_x u + \delta \partial_t u + \beta S_\tau u = f(u) - A(\alpha)u, \quad u(t,x) \in V; \\
u(t,-\frac{\pi}{2})= u(t,\frac{\pi}{2}) = 0, \quad x \in [-\frac{\pi}{2},\frac{\pi}{2}], \; t \geq 0; \\
u(t + 2\pi, x) = u(t,x) ,
\end{cases}
\end{equation}
where $(\nu,\delta,\tau) \in \br \times \br \times \br^+$ are fixed parameters, $S_\tau u(t,x) := u(t-\tau,x)$ is the $\tau$-delay operator, $A: \br \rightarrow L^\Gamma(V)$ is a continuous family of $\Gamma$-equivariant matrices and $f: V \rightarrow V$ is a differentiable function satisfying the conditions:
\vs
  \begin{enumerate}[label=($A_\arabic*$)]
\item\label{a1} $f(-u) = -f(u)$ for all $u \in V$; 
\item\label{a2} $\sigma f(u) = f(\sigma u)$ for all $u \in V, \; \sigma \in \Gamma$;
%\item\label{a3} $f(u) = o(u)$, that is
%\[
%\lim\limits_{u \rightarrow 0} \frac{| f(u) |}{| u |} = 0;
%\]
\item\label{a3} there exist $a,b > 0$ and $\eta_0,\eta_1 \geq 2$ such that
\begin{align*}
    \begin{cases}
    | f(u) | \leq a | u |^{\eta_0/2}; \\
    | f'(u) | \leq b | u |^{\eta_1/2}.
\end{cases}
\end{align*}
\end{enumerate}
In the context of the most natural application for \eqref{eq:system} --- that of an arrangement of
$N$ coupled identical vibrating strings subjected to nonlinear forces, non-trivial damping, and environmental feedback --- the vector $u(t,x) = (u_1(t,x), u_2(t,x), \ldots, u_N(t,x))$ represents the vertical displacement of our strings at any time $t \geq 0$ and position $x$ along the interval $[-\frac{\pi}{2},\frac{\pi}{2}]$, the bifurcation parameters $\alpha$ and $\beta$ are related to the coupling strength and the feedback intensity, respectively,
the {\it wave frequency} $\nu$ depends on the material properties (such as density, mass, tension), the {\it damping coefficient} $\delta$ indicates the total frictional forces influencing the strings and the {\it delay variable }$\tau$ is understood as the amount of time needed for vibrational outputs to feedback into the system.
\vs
Notice that the boundary value problem \eqref{eq:system} admits the zero solution for every parameter pair $(\alpha,\beta) \in \br \times \br$. The {\it nontrivial} functions which satisfy \eqref{eq:system} for particular parameter pairs can be categorized as follows: 
\begin{itemize}
    \item {\it stationary solutions}, depending only on the position variable $x \in [-\frac{\pi}{2},\frac{\pi}{2}]$ and which, though more interesting than the trivial solution, can often be identified using classical methods (e.g. by reducing \eqref{eq:system} to a system of ODEs);
    \item {\it non-stationary solutions}, expressing dependency on the time variable $t \in \br^+$ and which may exhibit exotic spatio-temporal symmetries that enhance our understanding of the model.
\end{itemize}
Our primary objectives are $(\rm i)$ to demonstrate how equivariant degree theory can be used in order to identify parameter values $(\alpha_0,\beta_0) \in \br \times \br$ at which branches of non-trivial periodic solutions to \eqref{eq:system} emerge from the zero solution, $(\rm ii)$ to determine 
under what conditions these branches consist only of non-stationary solutions and $(\rm iii)$ to describe their possible global and symmetric properties. Although models similar to \eqref{eq:system} have undergone
treatment as local bifurcation problems (cf. \cite{Koso1,Koso2,Koso3}), to the best of our knowledge, our present study is the first time that the global bifurcation of solutions to a nonlinear wave equation with non-trivial damping and delay has been addressed.
\vs
A prerequisite for the application of any degree-theory based argument to global bifurcation in \eqref{eq:system} is the reformulation of our system as a fixed point equation in an appropriate functional space $\mathscr H$ with a nonlinear operator in the form of a compact perturbation of identity. In order to appreciate the difficulty of this task, consider the differential operator $\mathscr F: \br \times \br \times \mathscr H \rightarrow \mathscr H$ given by
\[
\mathscr F(\alpha,\beta,u):= u - \mathscr L^{-1}(f(u) - A(\alpha)u + u - \beta S_\tau u),
\]
where 
\[
\mathscr L: \mathscr H \rightarrow \mathscr H, \quad \mathscr L:= \nu^2 \partial_t^2 - \partial_x^2 + \delta \partial_t + \id,
\]
and notice that \eqref{eq:system} is equivalent to the operator equation 
\[
\mathscr F(\alpha,\beta,u) = 0.
\]
The main complication arises from the fact that the existence, continuity and compactness of the inverse operator $\mathscr L^{-1}$ depends on the model variables $\nu$ and $\delta$. For example, it can be shown that $\delta > 0$ is a sufficient condition for the continuity of $\mathscr L^{-1}$. Indeed, this is the setting Kosovalić et al. consider in \cite{Koso1,Koso2,Koso3}, where Lyapanuv-Schmidt procedures, based on the implicit function theorem, are implemented to study the local Hopf bifurcation of periodic solutions for a model similar to \eqref{eq:system}. 
\vs
It can easily be verified that $\mathscr L^{-1}$ is also continuous under the conditions $\delta = 0$, $\nu \in \bq$, in which case, local bifurcation can be approached with techniques similar to those used to prove the existence of periodic solutions, also called free vibrations, in conservative hyperbolic equations (cf. \cite{AmZe80,Ar,Ki79,Ki00,Ra78,Ga17,Ga19,GaLe20,Br78}). When $\delta = 0$ and $\nu$ belongs to a particular set of exceptional irrational frequencies, one can obtain local bifurcation results using the implicit function theorem as handled in \cite{Ba,Ra}. However, in the general case of $\delta = 0$ and $\nu$ irrational, one encounters the so-called small divisor problem, where periodic solutions exist only inside a Cantor-type set of Diophantine frequencies $\nu$. Existence proofs for periodic solutions to \eqref{eq:system} in this last case require an extremely delicate Nash-Moser procedure similar to those implemented in KAM theory for the proof of quasiperiodic solutions (cf. \cite{Be07,Bo95,CrWa93,GaCr15}).
\vs
In the following section, after reformulating our problem in a suitable Sobolev space $\mathscr H$, we prove that the conditions $\nu \in \bq$ and $\delta > 0$ are sufficient to guarantee the compactness of $\mathscr L^{-1}$. Since the growth conditions \ref{a3} guarantee 
the continuity of the relevant superposition operator (cf. \cite{Moshe}), we can conclude that the assumptions \ref{a1}--\ref{a3}, $\delta > 0$ and  $\nu \in \bq$ permit the application of degree theory to study the global bifurcation of \eqref{eq:system}. We emphasize two additional novelties of our approach: $(\rm i)$ the superposition operator framework accommodates a broader class of nonlinearities than is typically considered---for instance, the approach of Kosovolich et al. in \cite{Koso1, Koso2, Koso3} is limited to analytic nonlinearities, allowing for an argument based on Banach algebras which is not employed in this paper--- $(\rm ii)$ since the variable $\nu \in \bq$ is fixed and our bifurcation parameters $(\alpha,\beta) \in \br \times \br$ are unrelated to the frequency of our model, the bifurcation results obtained in this paper are not Hopf bifurcations but rather represent a distinct bifurcation phenomenon. 
\vs
The remainder of the paper is organized as follows:
In Section \ref{sec:local_global_bif}, we recall equivariant analgoues of the classical Krasnosel'skii and Rabinowitz theorems, and apply  equivariant degree theory methods to establish local and global bifurcation results for \eqref{eq:system}. While laying the groundwork for these results, we describe a symmetric decomposition of our functional space $\mathscr H$, collect spectral data relating to the linearization $D_u \mathscr F(\alpha,\beta,0): \mathscr H \rightarrow \mathscr H$ and consider a particular fixed point reduction of our problem which enables us to distinguish between stationary and non-stationary solutions.
\vs
In order to demonstrate the versatility of our method, we dedicate Section \ref{sec:n_strings} to an analysis of the dynamical system described by \eqref{eq:system} with the coupling matrix
\begin{align*}
A(\alpha) := \zeta(\alpha)(L + \id):V \rightarrow V,
\end{align*}
where $\zeta: \br \rightarrow \br$ is any continuous, strictly monotonic function and $L:V \rightarrow V$ is the graph Laplacian of an undirected graph with $N$ vertices that are invariant under the permutation action of $\Gamma$. Without specifying the group $\Gamma$, we can always assume that a complete list $\{ \mathcal V_j \}_{j=0}^r$ of the irreducible $\Gamma$-representations is made available.
Corresponding to every irreducible $\Gamma$-representation $\mathcal V_j$, the
$\Gamma$-equivariant, symmetric positive-semidefinite matrix $L$ has some number $m_j$ of real, non-negative eigenvalues $z_{j,k}$, each associated with an eigenspace $E_{j,k} \simeq \mathcal V_j$, such that one has
\[
V = \bigoplus_{j=0}^r \bigoplus_{k=1}^{m_j} \mathcal V_j, \quad A(\alpha) = \bigoplus_{j=0}^r \bigoplus_{k=1}^{m_j} \zeta(\alpha)(z_{j,k}+1) \id_{\mathcal V_j}.
\]
We observe $\rm (i)$ that our system \eqref{eq:system} admits the symmetry group
\[
G:= S^1 \times \bz_2 \times \bz_2 \times \Gamma,
\]
$\rm(ii)$ that our operator $\mathscr F: \mathscr H \rightarrow \mathscr H$ is a $G$-equivariant compact perturbation of identity with respect to the isometric $G$-action
\[
(e^{i \theta},\kappa_1,\kappa_2,\gamma)u(t,x) := \kappa_1 \gamma u(t + \theta, \kappa_2 x), \quad u \in \mathscr H,
\]
where $(\theta,\kappa_1,\kappa_2,\gamma) \in S^1 \times \bz_2 \times \bz_2 \times \Gamma$ and $\rm (iii)$ that our functional space $\mathscr H$ admits a $G$-isotypic decomposition modeled on the irreducible $G$-representations
\[
\mathcal W_m \otimes \mathcal V_j^0 \text{ and } \mathcal W_m \otimes \mathcal V_j^1, \quad m \in \bn \cup \{0\}, \; j \in \{0,1,\ldots,r \},
\]
where the notations $\mathcal V_j^0$ and $\mathcal V_j^1$ indicate that the $j$-th irreducible $\Gamma$-representation $\mathcal V_j$ has been equipped with the corresponding $\bz_2 \times \bz_2 \times \Gamma$-action
\begin{align*}
(\kappa_1,\kappa_2,\gamma)u :=    \begin{cases}
        \kappa_1 \gamma u & \text{ if } u \in \mathcal V_j^0; \\
        \kappa_1 \kappa_2 \gamma u & \text{ if } u \in \mathcal V_j^1,
    \end{cases}
\end{align*}
and where, for each $m \in \bn$, we use $\mathcal W_m \simeq \bc$ to denote the irreducible $S^1$-representation equipped with the action
\[
e^{i \theta} w := e^{im \theta}w, \quad \theta \in S^1, w \in \mathcal W_m,
\]
and $\mathcal W_0 \simeq \br$ to denote the irreducible $S^1$-representation on which $S^1$ acts trivially. 
\vs
At this point, our main equivariant bifurcation results, Theorems \ref{thm:main_local_bif} and \ref{thm:main_global_bif}, can be applied to demonstrate the existence of unbounded branches of non-stationary solutions to \eqref{eq:system} exhibiting symmetries from all possible maximal orbit types emerging from an infinite number of critical points $(\alpha_0,\beta_0,0) \in \br \times \br \times \mathscr H$.
\begin{theorem} \rm \label{thm:intro_arbitrary_Gamma}
    The trivial solution to \eqref{eq:system} with the assignments $A(\alpha):= \zeta(\alpha)(L+\id)$, $\zeta:\br \rightarrow \br$ differentiable, strictly monotonic and under the conditions $\nu \in \bq$, $\delta > 0$, $\tau \neq \pi \bq$, undergoes a global bifurcation at the critical parameter values (if they exist):
\[
\alpha_{m,n,j,k} := \zeta^{-1}\left(\frac{\nu^2 m^2 - n^2 - \delta m \cot(m \tau)}{z_{j,k}+1} \right), \quad \beta_{m,n,j,k} : = \frac{\delta m}{\sin(m \tau)},
\]
for every index quadruple $(m,n,j,k) \in 2 \bn - 1 \times \bn \times \{0,\ldots,r\} \times \{1,\ldots, m_j\}$. In particular, for each maximal orbit type $(H)$ in the isotropy lattice of the irreducible $S^1 \times \bz_2 \times \bz_2 \times \Gamma$-representation $\mathcal W_m \otimes \mathcal V_j^0$, if $n$ is even, and $\mathcal W_m \otimes \mathcal V_j^1$, if $n$ is odd, the trivial solution $(\alpha_{m,n,j,k},\beta_{m,n,j,k},0) \in \br \times \br \times \mathscr H$ is a branching point for an unbounded branch of non-stationary periodic solutions $(\alpha,\beta,u) \in \br \times \br \times \mathscr H$ with symmetries at least $(H)$.
\end{theorem}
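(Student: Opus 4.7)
The plan is to specialize the general equivariant bifurcation framework of Theorems~\ref{thm:main_local_bif} and \ref{thm:main_global_bif} to the linearization of $\mathscr F(\alpha,\beta,u)$ at $u=0$ with $A(\alpha)=\zeta(\alpha)(L+\id)$. Recall that $\mathscr H$ admits a $G$-isotypic decomposition indexed by quadruples $(m,n,j,k)$, where $m$ labels the $t$-Fourier mode (carrying $\mathcal W_m$), $n$ labels a Dirichlet eigenfunction of $-\partial_x^2$ on $[-\pi/2,\pi/2]$ (whose parity under $x\mapsto -x$ determines whether the $\Gamma$-factor enters as $\mathcal V_j^0$ or $\mathcal V_j^1$), and $k$ selects the $z_{j,k}$-eigenspace of $L$ inside $\mathcal V_j$. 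On each such block, the linearization reduces to multiplication by the symbol
\[
\Lambda_{m,n,j,k}(\alpha,\beta) \;=\; -\nu^2 m^2 + n^2 + i\delta m + \beta e^{-i m\tau} + \zeta(\alpha)(z_{j,k}+1),
\]
and setting its real and imaginary parts to zero recovers exactly the critical pairs $(\alpha_{m,n,j,k},\beta_{m,n,j,k})$ of the statement. The hypotheses $\tau\notin\pi\bq$ and $m\in 2\bn-1$ are what force $\sin(m\tau)\ne 0$, while strict monotonicity of $\zeta$ makes $\alpha_{m,n,j,k}$ well-defined and guarantees that the kernel of $D_u\mathscr F(\alpha_{m,n,j,k},\beta_{m,n,j,k},0)$ is a single irreducible $G$-subrepresentation of type $\mathcal W_m\otimes \mathcal V_j^{\epsilon(n)}$.

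Having isolated the critical set, I would compute the $G$-equivariant bifurcation invariant at each crossing: the jump of the $G$-equivariant degree of $\id-D_u\mathscr F(\alpha,\beta,0)$ along a short arc transverse to the critical curve through $(\alpha_{m,n,j,k},\beta_{m,n,j,k})$. Because the crossing is simple in the irreducible sense above, the standard formula used in the proof of Theorem~\ref{thm:main_local_bif} produces a nonzero $(H)$-coordinate for every maximal orbit type $(H)$ in the isotropy lattice of the kernel representation. This delivers, via Theorem~\ref{thm:main_local_bif}, local branches of nontrivial solutions with isotropy at least $(H)$, and the equivariant Rabinowitz alternative of Theorem~\ref{thm:main_global_bif} then upgrades each to a global continuum. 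Non-stationarity is automatic: $m\ne 0$ makes $\mathcal W_m$ a nontrivial $S^1$-representation, forcing every kernel element, and hence every bifurcating solution with isotropy at least $(H)$, to depend nontrivially on $t$.

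The main obstacle I expect is the globalization step. The Rabinowitz alternative admits an a priori loophole: a local branch could return to the trivial family at another critical parameter of matching orbit type, cancelling the degree jump and producing only a bounded continuum. Excluding this requires an isotropy-indexed accounting over the doubly-infinite collection $\{(\alpha_{m,n,j,k},\beta_{m,n,j,k})\}$ to show that, for each maximal orbit type $(H)$, the relevant coordinate of the bifurcation invariant cannot be cancelled elsewhere. This is possible because distinct quadruples $(m,n,j,k)$ yield distinct parameter pairs --- again by injectivity of $\zeta$ and $\tau\notin\pi\bq$ --- and because the maximal orbit types can be tracked irreducible-by-irreducible. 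A secondary and more routine point is verifying that assumption~\ref{a3} keeps $\mathscr F$ a $C^1$ compact perturbation of the identity near $0$, which follows from the continuity of the superposition operator induced by $f$ together with the compactness of $\mathscr L^{-1}$ established earlier.
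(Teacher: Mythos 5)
Your overall strategy (linearize, read off critical parameters from the block symbol, compute a local equivariant invariant, invoke Krasnosel'skii and then Rabinowitz) matches the paper's. However, there are three genuine gaps.

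First, you treat non-stationarity of the \emph{global} branch as automatic: ``$m\neq 0$ forces every bifurcating solution with isotropy at least $(H)$ to depend on $t$.'' But the paper's definition of a branch is the maximal connected component $\mathscr C$ of $\overline{\mathscr S}$ containing a continuum through the critical point, and having ``symmetries at least $(H)$'' only asserts $\mathscr C\cap\mathscr S^H$ is a continuum, not that every point of $\mathscr C$ lies in $\mathscr S^H$. Away from the bifurcation point the branch could a priori pick up lower-symmetry, possibly stationary, solutions. The paper handles this by performing the entire Rabinowitz argument inside the fixed-point subspace $\mathscr H^{\bm H}$ for $\bm H=\{(1,1,1,e_\Gamma),(-1,-1,1,e_\Gamma)\}$, where every nonzero function is $\pi$-antiperiodic in $t$ and hence non-stationary; this also explains the restriction to odd $m$, which your proof presupposes but never derives, and lets one dispense with the stationary-kernel hypothesis \ref{b1t}. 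Without this reduction your non-stationarity claim for the global branch does not follow.

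Second, you assert the kernel at each critical pair is ``a single irreducible $G$-subrepresentation,'' and that ``distinct quadruples yield distinct parameter pairs.'' Neither holds in general: $\beta_{m,n,j,k}=\delta m/\sin(m\tau)$ depends only on $m$, so for fixed odd $m$ many triples $(n,j,k)$ share the same $\beta$, and nothing prevents the corresponding $\alpha$-values from coinciding as well (the paper's remark after the theorem explicitly allows resonant, higher-multiplicity critical points). The actual mechanism the paper uses to rule out cancellation---both between summands at a single critical point and between different critical points in the Rabinowitz alternative---is not injectivity of the parameter map but sign coherence of the crossing numbers: since $\zeta$ is strictly monotonic, $\partial_\alpha\zeta_{j,k}(\alpha)$ has a fixed sign, so all $\rho_{m,n,j,k}=\operatorname{sign}\bigl(-\partial_\alpha\zeta_{j,k}(\alpha_0)\sin(m\tau)\bigr)$ with the same $m$ share a sign; this is exactly the hypothesis of Corollaries \ref{cor:main_local_bif} and \ref{cor:main_global_bif}, which you never invoke.

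Third, a smaller point: you say strict monotonicity of $\zeta$ ``guarantees that the kernel is a single irreducible,'' but monotonicity only ensures $\alpha_{m,n,j,k}$ is well-defined and that the Jacobian $D\mu_{m,n,j,k}$ is nonsingular (together with $\tau\notin\pi\bq$), hence that $\Lambda^{\bm H}$ is discrete. The simplicity claim is both unnecessary and false in general; replacing it with the sign-coherence argument is what makes the proof go through.
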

\begin{remark} \rm
Theorem \ref{thm:intro_arbitrary_Gamma} imposes no restrictions on the resonance or multiplicity of critical points and therefore does not rely on restrictive kernel dimension conditions at critical parameter values which typically accompany non-topological approaches to bifurcation problems. Notice also that every branch of non-trivial solutions to \eqref{eq:system} detected by our method is unbounded and consists only of non-stationary functions (this last fact, on account of a fixed point reduction of our bifurcation problem to a space of anti-periodic solutions and suggested by our parity limitation on the index $m$).
\end{remark}
Finally, in Section \ref{sec:example_2}, by considering the symmetry implications of Theorem \ref{thm:intro_arbitrary_Gamma} for the special case of $\Gamma = D_N$, we identify three distinct types of maximal elements in the isotropy lattices $\mathcal W_m \otimes \mathcal V_j^0$ and $\mathcal W_m \otimes \mathcal V_j^1$. Full generator descriptions of the maximal orbit types for all $(m,j) \in 2 \bn -1 \times \{0,\ldots,r\}$ are provided and we consider motivating examples of functions exhibiting each of the corresponding symmetry types (cf. Figure \ref{fig:eigenfunctions}). 
\vs
For convenience, the Appendices include an explanation of notations used, and brief introductions to the equivariant Brouwer degree, the $S^1$-equivariant degree, and the twisted equivariant degree theories. Readers who are interested in the natural generalizations of these (local) degree theories to their infinite-dimensional counterparts are referred to \cite{book-new,AED}.
\section{Functional Space Reformulation of \eqref{eq:system}} \label{sec:reformulation}
In this section, the bifurcation problem \eqref{eq:system} is prepared for application of the twisted equivariant degree by a two-parameter operator equation reformulation in a suitable functional space. 
\vs
Define the set $\Om:= S^1\times (-\frac{\pi}{2},\frac{\pi}{2})$, where we have made the identification $S^1 \simeq \br/2\pi \bz$, and consider the Sobolev space 
\begin{align}\label{def:space_H}
\mathscr H := H^2(\Om; V) \cap H^1_0(\Om; V),
\end{align}
equipped with the norm
\begin{align} \label{def:sobolev_norm}
    \|u\|_{2}:=\max\{\|D^s u\|_{L^2}: |s|\le 2\},
\end{align}
where $s := (s_1,s_2)$, $|s|:=s_1+s_2 \leq 2$, and $D^s u :=\frac{\partial ^{|s|} \varphi}{\partial^{s_1}t\partial^{s_2}x}$.
\vs
In what follows, $V^c$ is used to indicate the complexification of the orthogonal $\Gamma$-representation $V$ (identified, in the usual way, with the tensor product $\bc \otimes V$) and we denote by $\mathscr H^c:=  H^2(\Om; V^c) \cap H^1_0(\Om; V^c)$ the corresponding complexification of the space \eqref{def:space_H}. 
\vs
Every function $u\in \mathscr H^c$ admits a complex Fourier expansion of the form
\begin{equation}\label{eq:Fourier}
u(t,x) = \sum\limits_{(m,n) \in \bz \times \bn} e^{imt}v_n(x) c_{m,n}, \quad c_{m,n} \in V^c,
\end{equation}
where 
\[
v_n(x) :=
\begin{cases}
    \cos(nx), \quad & \text{ if } n \text{ is odd}; \\
    \sin(nx), \quad & \text{ otherwise.}
\end{cases}
\]
On the other hand, a function of the form \eqref{eq:Fourier} belongs to $\mathscr H$ if and only if the following conditions are satisfied for all $(m,n) \in \bn \times \bn$:
\begin{align*}
%\label{condition:fourier_coefficients}
    \begin{cases}
         c_{-m,n} = \overline{c_{m,n}}; \\
         c_{0,n} \in V.
    \end{cases}
\end{align*}
Therefore, the standard Sobolev norm \eqref{def:sobolev_norm} is equivalent to the norm $\| \cdot \|_{\mathscr H}: \mathscr H \rightarrow \br$ defined with respect to the Fourier expansion \eqref{eq:Fourier} of any element in $\mathscr H$, as follows
\begin{align}\label{def:norm_H}
  \| u \|^2_{\mathscr H}:= \sum_{\substack{m \geq 0\\ n \in \bn}} | c_{m,n} |^2 (m^2 + n^2 + 1)^2.
\end{align}
We also consider the closed, densely-defined, differential operator 
\begin{align}\label{def:operator_L}
\mathscr L: \mathscr H \rightarrow L^2(\Om; V), \quad \mathscr L := \nu^2 \partial_t^2 - \partial_x^2 + \delta \partial_t + \id,
\end{align}
which has the complex eigenvalues 
\begin{align}\label{def:eigenvalues_L}
\xi_{m,n} := -\nu^2 m^2 + n^2 + i\delta m + 1, \quad (m,n) \in \bn \cup \{0\} \times \bn,
\end{align}
with the corresponding
complex eigenspaces
\begin{align}\label{def:eigenspace_L_complex}
\mathscr E^c_{m,n} := \{ e^{imt}v_n(x) c: c \in V^c \} \subset \mathscr H^c.
\end{align}
\begin{remark} \rm \label{rm:eigenvalues_L}
Notice that the eigenvalues $\xi_{m,n} \in \sigma (\mathscr L)$ satisfy
\begin{align} \label{condition:eigenvalues_L}
    \begin{cases}
        \xi_{m,n} \neq 0, \quad (m,n) \in \bn \cup \{0\} \times \bn; \\
        \overline{\xi_{m,n}}=\xi_{-m,n}.
    \end{cases}
\end{align}
\end{remark}
\begin{lemma} \rm \label{lemm:eigenvalues_L_estimate}
    If $\nu \in \bq$ and $\delta > 0$, then there exists $C > 0$ such that, for all $m \geq 0$ and $n \in \bn$, one has
    \begin{align} \label{rel:estimate_global_mn}
            | \xi_{m,n} | \geq C( n + m ).
    \end{align}
\end{lemma}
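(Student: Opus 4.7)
The plan is to compute
\[
|\xi_{m,n}|^{2} = (n^{2}-\nu^{2}m^{2}+1)^{2} + \delta^{2} m^{2}
\]
directly and then argue by a simple case-split on the ratio of $n$ to $|\nu|\,m$. The only delicate point is the near-resonant regime $n\approx |\nu|\,m$, where the real part of $\xi_{m,n}$ can be small; there, the positivity of the damping $\delta$ is what keeps the estimate alive.

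First I would dispose of $m=0$: since $|\xi_{0,n}| = n^{2}+1 \geq n = n+m$, the inequality holds with constant $1$. For $m \geq 1$ I would introduce the threshold $n \geq \sqrt{2}\,|\nu|\,m$. Above this threshold, $n^{2}-\nu^{2}m^{2}+1 \geq \tfrac{1}{2}n^{2}+1 \geq n$, so the real part alone yields $|\xi_{m,n}| \geq n$; combined with the trivial imaginary-part bound $|\xi_{m,n}|\geq \delta m$ and the elementary inequality $\max(a,b)\ge \tfrac{1}{2}(a+b)$, one obtains
\[
|\xi_{m,n}| \;\geq\; \tfrac{1}{2}\min(1,\delta)\,(n+m).
\]
Below the threshold, i.e.\ when $n < \sqrt{2}\,|\nu|\,m$, the bound $n+m \leq (\sqrt{2}\,|\nu|+1)\,m$ combines with $|\xi_{m,n}|\geq \delta m$ to give
\[
|\xi_{m,n}| \;\geq\; \frac{\delta}{\sqrt{2}\,|\nu|+1}\,(n+m).
\]
Taking $C$ to be the minimum of the three constants so produced then completes the argument.

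I anticipate no serious obstacle here; the estimate is essentially elementary. It is worth noting that the rationality assumption $\nu \in \bq$ is not actually invoked at this step---what carries the resonant strip is solely the hypothesis $\delta > 0$. The condition $\nu \in \bq$ will become decisive only in the subsequent step, where one needs to upgrade the linear lower bound \eqref{rel:estimate_global_mn} to the quadratic spectral separation required for the compactness (as opposed to mere continuity) of $\mathscr L^{-1}$; for irrational $\nu$ one would there encounter the small-divisor phenomena alluded to in the introduction.
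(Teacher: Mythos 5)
Your proof is correct, and the case analysis is sound: the threshold $n \geq \sqrt{2}\,|\nu|\,m$ cleanly separates the regime where the real part alone dominates from the regime where the imaginary part $\delta m$ provides the lower bound, and the elementary $\max(a,b)\geq \tfrac12(a+b)$ trick ties everything together. The route is genuinely different from the paper's. The paper sets $\nu = p/q$ and splits on whether $n = \tfrac{p}{q}m$ exactly or not; in the off-resonant case it extracts a lower bound from the real part using the arithmetic fact $|n - \tfrac{p}{q}m| \geq \tfrac{1}{q}$, and only in the exact-resonance case falls back on the damping. Your argument sidesteps the arithmetic entirely: the damping $\delta m$ covers the whole strip $n < \sqrt{2}\,|\nu|\,m$, not just the exact resonance line, so there is no need to quantify how far $n^2 - \nu^2 m^2$ can be from zero off resonance. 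This is simpler and more robust. Your observation that $\nu \in \bq$ is not actually invoked is correct as far as it goes, and it is a real simplification over the paper's proof.

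One caveat concerns your closing speculation that rationality ``will become decisive'' in the passage from the linear bound to the quadratic bound needed for $\mathscr L^{-1}$. That is not quite right: since $(n+m)^2 \geq n^2 + m^2$, the linear bound \eqref{rel:estimate_global_mn} already yields $|\xi_{m,n}|^2 \geq C^2(n^2+m^2)$, which is exactly what Lemma \ref{lemm:compact_L} uses, so rationality is not needed there either when $\delta > 0$. The small-divisor difficulty the paper's introduction refers to arises in the undamped case $\delta = 0$, where there is no imaginary part to rescue the resonant strip; with $\delta > 0$ the damping resolves it for any $\nu$, rational or not. This does not affect the validity of your proof of the present lemma, but the remark as stated would mislead a reader about where the hypothesis $\nu\in\bq$ is actually doing work.
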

\begin{proof}
Without loss of generality, suppose that $\nu = \frac{p}{q}$ for some $p,q \in \bn$ and notice that
    \[
    | \xi_{m,n} | \geq | n^2 - \frac{p^2}{q^2}m^2 | - 1 + \delta m.
    \]
Let us first show that there exist positive numbers $D,M > 0$ with
\begin{align} \label{rel:estimate_local_mn}
    | \xi_{m,n} | \geq D( n + m),
\end{align}
for all $m \geq M$.
Indeed, we can always choose $M > 0$ large enough such that
\begin{align} \label{rel:lower_bound_m}
    \delta m \geq 2, \; \forall_{m \geq M},
\end{align}
Then, in the case that $n = \frac{p}{q}m$, one has
   \begin{align*}
           && | \xi_{m,n} | &\geq \delta  m - 1 &&   \\
           && & \geq \frac{\delta}{2} m = \frac{\delta}{4}(\frac{q}{p}n + m) && \text{(from } \frac{\delta}{2}m -1 > 0 \text{)} \\
           && & \geq D' (n + m), &&
   \end{align*}
and \eqref{rel:estimate_local_mn} follows with 
\[
D' := \frac{ \delta}{4} \min(\frac{q}{p}, 1).
\]
Supposing instead that $n \neq \frac{p}{q} m$, one has
   \begin{align*}
           && | \xi_{m,n} | &\geq |n - \frac{p}{q} m| | n + \frac{p}{q} m| + 
 \delta  m - 1 &&   \\
           && & \geq |n - \frac{p}{q} m| | n + \frac{p}{q} m| + \frac{\delta}{2}m, && \text{(from } \frac{\delta}{2}m -1 > 0 \text{)}
   \end{align*}
which, together with
\[
   | n - \frac{p}{q} m | = | \frac{qn-pm}{q} | > \frac{1}{q},
\]
yields
   \begin{align*}
           && | \xi_{m,n} | &\geq \frac{1}{q}(n + \frac{p}{q} m) + \frac{\delta}{2}m &&  \text{(from } | n - \frac{p}{q} m | > \frac{1}{q} \text{)} \\
           && & \geq D^{''}(n + m), && 
   \end{align*}
%\begin{align*}
%   | \xi_{m,n} | &\geq \frac{1}{q}(n + \frac{p}{q} m) + \frac{\delta}{2}m \\
%   & \geq D^{''}(n + m),
%\end{align*}
where \eqref{rel:estimate_local_mn} follows with
\[
D^{''} := \frac{1}{q}\min\{1,\frac{p}{q}+\frac{\delta}{2}q\}.
\]
Next, let us show that there exists a number $N > 0$, such that for any $m \in \bn \cup \{0\}$ with $\delta m < 2$ and for all $n \geq N$ the relation \eqref{rel:estimate_local_mn} is satisfied. Indeed, suppose that $m < M$ (here $M > 0$ is understood as the smallest positive number for which \eqref{rel:lower_bound_m} holds) and choose $N > 0$ sufficiently large such that 
\[
n^2 \geq n + \nu^2 \frac{4}{\delta^2} + 1, \; \forall_{n \geq N}.
\]
Then, since 
   \begin{align*}
           &&  n^2 - \nu^2 m^2 &> n^2 - \nu^2  \frac{4}{\delta^2} &&  \text{(from } \delta m < 2 \text{)} \\
           && & \geq n+1 > 0, && \text{(from } n^2 \geq n + \nu^2 \frac{4}{\delta^2} + 1 \text{)}
   \end{align*}
and
\begin{align*}
    \nu^2\frac{4}{\delta^2} - \nu^2 m^2 > 0, &&  \text{(from } \delta m < 2 \text{)}
\end{align*}
one has
   \begin{align*}
           && | \xi_{m,n} | & > n^2 - \nu^2m^2 - 1 + \delta m &&  \text{(from } n^2 - \nu^2m^2 > 0 \text{)} \\
           && & \geq n + \nu^2 \frac{4}{\delta^2} - \nu^2m^2 + \delta m && \text{(from } n^2 \geq n + \nu^2 \frac{4}{\delta^2} + 1 \text{)} \\
           && & \geq n + \delta m, && \text{(from } \nu^2\frac{4}{\delta^2} - \nu^2 m^2 > 0 \text{)}
   \end{align*}
such that \eqref{rel:estimate_local_mn} follows with
\[
D^{'''} := \min (1,\delta).
\]
Now, from Remark \ref{rm:eigenvalues_L}, it is enough to take
\begin{align*} %\label{eq:estimate_constant_mn}
  C = \min_{\substack{0 \leq m < M\\ 0 < n < N}}\{D', D^{''}, D^{'''}, \frac{|\xi_{m,n}|}{ m+n}\},  
\end{align*}
and the relation \eqref{rel:estimate_global_mn} is satisfied for all $(m,n) \in \bn \cup \{0\} \times \bn$.
\end{proof}
\vs
\noindent Motivated by the estimate established in Lemma \ref{lemm:eigenvalues_L_estimate}, we introduce the following assumptions on the scalar-variables $\delta$ and $\nu$ from system \eqref{eq:system}.
\begin{enumerate} [label=($A_0$)] 
    \item\label{a0} $\delta > 0$ and $\nu \in \bq$.
\end{enumerate}
The following Lemma is essential to verifying the applicability a Leray-Schauder degree based approach to the bifurcation problem \eqref{eq:system}.
\begin{lemma} \rm \label{lemm:compact_L}
    Under condition \ref{a0}, the operator $\mathscr L^{-1}|_{H^1(\Om; V)}: H^1(\Om; V) \rightarrow \mathscr H$ is bounded.
\end{lemma}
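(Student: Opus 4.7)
The plan is to exploit the fact that $\mathscr L$ acts diagonally on the Fourier basis $\{e^{imt}v_n(x)\}_{(m,n)\in\bz\times\bn}$, so that $\mathscr L^{-1}$ also acts coefficient-wise, and then to combine the estimate of Lemma \ref{lemm:eigenvalues_L_estimate} with an elementary comparison of the $H^1$- and $\mathscr H$-weights to obtain the desired continuity bound.

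First, I would fix $u \in H^1(\Om;V)$ and expand it as in \eqref{eq:Fourier}, namely $u(t,x) = \sum e^{imt} v_n(x) c_{m,n}$, subject to the reality conditions $c_{-m,n} = \overline{c_{m,n}}$ and $c_{0,n} \in V$. By Remark \ref{rm:eigenvalues_L}, every $\xi_{m,n}$ is non-zero, so that the formal inverse
\[
\mathscr L^{-1} u := \sum_{(m,n) \in \bz \times \bn} \xi_{m,n}^{-1}\, e^{imt} v_n(x)\, c_{m,n}
\]
is well-defined coefficient-wise. Because $\overline{\xi_{m,n}} = \xi_{-m,n}$, the reality conditions are preserved, and since $v_n(\pm\pi/2) = 0$ for every $n \in \bn$, every partial sum lies in $H^1_0(\Om;V)$; thus a bound on $\|\mathscr L^{-1} u\|_{\mathscr H}$ will automatically place the full series in $\mathscr H = H^2 \cap H^1_0$.

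Next, I would apply Lemma \ref{lemm:eigenvalues_L_estimate} to write $|\xi_{m,n}|^{-2} \leq C^{-2}(m+n)^{-2}$ and combine this with the elementary inequality $(m^2+n^2+1) \leq 2(m+n)^2$, which holds for every $(m,n) \in (\bn \cup \{0\}) \times \bn$ because $n \geq 1$. Together with the characterization \eqref{def:norm_H} of $\|\cdot\|_{\mathscr H}$ and the standard Fourier equivalence $\|u\|_{H^1}^2 \asymp \sum_{m \geq 0,\, n \in \bn} |c_{m,n}|^2 (m^2+n^2+1)$, these give
\[
\|\mathscr L^{-1}u\|_{\mathscr H}^2 \;=\; \sum_{\substack{m \geq 0 \\ n \in \bn}} \frac{(m^2+n^2+1)^2}{|\xi_{m,n}|^2}\, |c_{m,n}|^2 \;\leq\; \frac{2}{C^2}\sum_{\substack{m \geq 0 \\ n \in \bn}} |c_{m,n}|^2 (m^2+n^2+1) \;\lesssim\; \|u\|_{H^1}^2,
\]
which yields the required boundedness. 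The principal analytical effort has already been absorbed into Lemma \ref{lemm:eigenvalues_L_estimate}, so the only remaining subtleties are ensuring the elementary weight comparison is uniform at small indices (which is why $n \geq 1$ is essential), and confirming that the coefficient-wise definition of $\mathscr L^{-1}u$ respects the Dirichlet boundary conditions rather than merely producing an element of $H^2(\Om;V)$.
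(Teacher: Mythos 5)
Your proof is correct and follows the same Fourier-coefficient argument as the paper: combine the eigenvalue bound from Lemma \ref{lemm:eigenvalues_L_estimate} with the weighted-$\ell^2$ characterizations of $\|\cdot\|_{\mathscr H}$ and $\|\cdot\|_{H^1}$ to obtain $\|\mathscr L^{-1}u\|_{\mathscr H}^2 \lesssim \|u\|_{H^1}^2$. Your explicit elementary inequality $(m^2+n^2+1)\le 2(m+n)^2$ (valid since $n\ge 1$) is in fact slightly more careful than the paper's parenthetical suggestion $C'=1/C^2$, which fails at $m=0$; the correct constant is $C'=2/C^2$, exactly as you derived.
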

\begin{proof}
Let's equip the Sobolev space $H^1(\Om; V) \subset L^2(\Om; V)$ with the norm
\begin{align*}
%\label{def:norm_H1}
  \| u \|_{H^1}^2 := \sum\limits_{\substack{m \geq 0\\ n \in \bn}} | c_{m,n} |^2 (m^2 + n^2 + 1)^1,   
\end{align*}
and show that $\mathscr L^{-1} u \in \mathscr H$ for every $u \in H^1(\Om; V)$. Indeed, rearranging the terms of relation \eqref{rel:estimate_global_mn}, one can verify that there exists a $C' > 0$ (for example, take $C' = \frac{1}{C^2}$) with
\begin{align*} 
%\label{rel:estimate_3}
     C' \geq \frac{m^2 + n^2 + 1}{| \xi_{m,n} |^2},
\end{align*}
such that, for any $u \in H^1(\Om; V)$, one has 
\begin{align*}
    \| \mathscr L^{-1}u \|_{\mathscr H}^2 &= \sum_{\substack{m \geq 0\\ n \in \bn}} \frac{| c_{m,n} |^2}{|\xi_{m,n}|^2} (m^2 + n^2 + 1)^2 \\
    & \leq C' \sum_{\substack{m \geq 0\\ n \in \bn}} 
    | c_{m,n} |^2 (m^2 + n^2 + 1) = C'
    \| u \|_{H^1}^2.
\end{align*}
\end{proof}
\vs
Choosing
\begin{align} \label{def:lebesgue_embedding_order}
 \eta > \max \{\eta_1,\eta_2\},   
\end{align}
the {\it Nemytskii operator}
\begin{align} \label{def:nemystskii_operator}
N_f: W^{1,\eta}(\Om;V) \rightarrow H^1(\Om;V), \quad N_f(u)(t,x) : = f(u(t,x)),
\end{align}
is continuous (cf. \cite{Moshe}) and (by the Rellich-Kondrachov Theorem)  the Sobolev embedding 
\begin{align*} %\label{def:sobolev_embedding}
j: \mathscr H \rightarrow W^{1,\eta}(\Om; V), \quad j(u)(t,x) := u(t,x),
\end{align*}
is compact. 
\begin{lemma} \rm \label{lemm:nemystskii_operator_frechet}
Under condition \ref{a3},
the Nemystskii operator \eqref{def:nemystskii_operator} is Fréchet differentiable at the origin with
\[
D N_f(0) \equiv 0.
\]
\end{lemma}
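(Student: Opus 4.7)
From condition \ref{a3} one immediately reads off $f(0) = 0$ and $f'(0) = 0$, so $N_f(0) = 0$ and the only candidate for the Fréchet derivative at the origin is the zero operator. The plan is therefore to verify directly that
\[
\|N_f(u)\|_{H^1} = o\!\left(\|u\|_{W^{1,\eta}}\right) \quad \text{as } \|u\|_{W^{1,\eta}} \to 0.
\]
A key preparatory step is to sharpen the primary growth bound on $f$ using $f(0) = 0$ and the derivative bound in \ref{a3} via the integral form of the mean value theorem: for every $u \in V$,
\[
|f(u)| \le \int_0^1 |f'(tu)|\,|u|\,dt \le \frac{b}{\eta_1/2+1}\,|u|^{\eta_1/2+1}.
\]
Since $\eta_1 \ge 2$, this shows $f$ vanishes to order at least two at the origin, even if the primary estimate $|f(u)| \le a|u|^{\eta_0/2}$ happens to be only linear (the borderline case $\eta_0 = 2$).

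Next I would split $\|N_f(u)\|_{H^1}^2 = \|f(u)\|_{L^2}^2 + \|\nabla f(u)\|_{L^2}^2$ and bound each piece by a super-linear power of $\|u\|_{W^{1,\eta}}$. For the first term, integrating the sharpened pointwise estimate and using the continuous embedding $W^{1,\eta}(\Om;V) \hookrightarrow L^{\eta_1+2}(\Om;V)$ on the bounded domain $\Om$ (guaranteed by taking $\eta$ sufficiently large, as permitted by \eqref{def:lebesgue_embedding_order}) yields
\[
\|f(u)\|_{L^2}^2 \le c^2 \int_{\Om} |u|^{\eta_1+2} \le C_1 \|u\|_{W^{1,\eta}}^{\eta_1+2}.
\]
For the second term, the chain rule together with \ref{a3} gives $|\nabla f(u)| \le b|u|^{\eta_1/2}|\nabla u|$, and H\"older's inequality with conjugate exponents $\eta/\eta_1$ and $\eta/2$ produces
\[
\|\nabla f(u)\|_{L^2}^2 \le b^2 \int_{\Om} |u|^{\eta_1}|\nabla u|^2 \le C_2 \|u\|_{L^\eta}^{\eta_1}\|\nabla u\|_{L^\eta}^2 \le C_2' \|u\|_{W^{1,\eta}}^{\eta_1+2}.
\]
Combining these two estimates gives $\|N_f(u)\|_{H^1} \le C\,\|u\|_{W^{1,\eta}}^{(\eta_1+2)/2}$, and since $(\eta_1+2)/2 \ge 2 > 1$, the right-hand side is indeed $o(\|u\|_{W^{1,\eta}})$.

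The main obstacle is the borderline regime $\eta_0 = \eta_1 = 2$, in which the primary growth bound from \ref{a3} is merely linear, so direct application of $|f(u)| \le a|u|$ would only produce $\|f(u)\|_{L^2} = O(\|u\|_{W^{1,\eta}})$---insufficient for Fr\'echet differentiability. The mean value upgrade above is what converts this into the super-linear estimate $|f(u)| \le c|u|^{\eta_1/2+1}$ that allows the argument to close; correspondingly, I would need to ensure that the integrability exponent $\eta$ in \eqref{def:lebesgue_embedding_order} is taken strictly larger than (not merely equal to) the growth exponents, so that the conjugate-exponent conditions required by the H\"older step in the gradient estimate close up on the bounded domain $\Om$.
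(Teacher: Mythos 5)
Your proposal is correct, and it takes a genuinely different --- and more careful --- route than the paper's own argument. The paper passes from $\|f'(v)\,\partial_t v\|_{L^2}^2$ to $\|f'(v)\|_{L^2}^2\,\|\partial_t v\|_{L^2}^2$ in \eqref{eq:boundedness_nemystskii_operator}, attributing this to H\"older; but $\|gh\|_{L^2}\le\|g\|_{L^2}\|h\|_{L^2}$ is not a H\"older estimate (for an $L^2$-norm of a product one needs conjugate exponents summing to $\tfrac12$, e.g.\ $L^4\times L^4$, or $L^\infty\times L^2$). Your gradient estimate with conjugate exponents $\eta/\eta_1$ and $\eta/2$, valid for $\eta\ge\eta_1+2$ as \eqref{def:lebesgue_embedding_order} permits, is the correct form of this step. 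The paper then concludes by asserting $\lim\|f(v)\|_{L^2}/\|v\|_{L^2}=0$ from the primary bound $|f(u)|\le a|u|^{\eta_0/2}$ alone; in the borderline case $\eta_0=2$ this yields only boundedness, not decay, and even for $\eta_0>2$ it requires an uncomfortable comparison of $\|v\|_{L^\eta}$ against $\|v\|_{L^2}$ that is not supplied. Your mean-value upgrade $|f(u)|\le\frac{b}{\eta_1/2+1}\,|u|^{\eta_1/2+1}$, pulled from the derivative bound, resolves both points at once and lands on the clean super-linear estimate $\|N_f(u)\|_{H^1}\lesssim\|u\|_{W^{1,\eta}}^{(\eta_1+2)/2}$, from which Fr\'echet differentiability at the origin with zero derivative follows for every admissible $\eta_0,\eta_1\ge 2$. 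Same conclusion, but your route closes two places where the paper's proof, taken literally, is either invalid or silent on the borderline regime.
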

\begin{proof}
It will be sufficient to demonstrate that the bounded linear operator
\[
D N_f(0): W^{1,\eta}(\Om;V) \rightarrow H^1(\Om;V), \quad [D N_f(u) v](t,x) := f'(u(t,x))v(t,x),
\]
satisfies the condition
\begin{align} \label{eq:frechet_condition}
\lim\limits_{\| v \|_{W^{1,\eta}} \to 0} \frac{\| N_f(v) - N_f(0) - D N_f(0) v \|_{H^1}}{\| v \|_{W^{1,\eta}}}  = 0.  
\end{align}
Indeed, notice that assumption \ref{a3} implies 
\begin{align} \label{eq:little_o_notation_f}
    \lim\limits_{u \rightarrow 0} \frac{| f(u) |}{| u |} = 0.
\end{align}
In turn, \eqref{eq:little_o_notation_f} implies
\begin{align*}
    f(0) = f'(0) = 0,
\end{align*}
such that \eqref{eq:frechet_condition} becomes
\begin{align} 
\label{eq:frechet_condition_simplified}
     \lim\limits_{\| v \|_{W^{1,\eta}} \to 0} \frac{\| N_f(v) \|_{H^1}}{\| v \|_{W^{1,\eta}}} = 0.
\end{align}
Combining \ref{a3} and \eqref{def:lebesgue_embedding_order} with the triangle and Hölder inequalities, one obtains
\begin{align} \label{eq:estimate_f}
    \| f(v) \|_{L^2} &= \| a |v|^{\frac{\eta_0}{2}} \|_{L^2} \nonumber \\
    &=  a \left( \int_\Om |v|^{\eta_0} \right)^{\frac{1}{2}} \nonumber \\
    & \leq a 2^{\frac{\eta - \eta_0}{2\eta}}\pi^{1 - \eta_0/\eta} \left( \int_\Om |v|^{\eta_0 \frac{\eta}{\eta_0}} \right)^{\frac{1}{\eta}\frac{\eta_0}{2}} \nonumber \\
    &= a 2^{\frac{\eta - \eta_0}{2\eta}}\pi^{1 - \eta_0/\eta} \| v \|_{L^\eta}^{\frac{\eta_0}{2}}.
\end{align}
Similarly, one finds
\begin{align} \label{eq:estimate_f'}
    \| f'(v) \|_{L^2} \leq b 2^{\frac{\eta - \eta_1}{2\eta}}\pi^{1 - \eta_1/\eta} \| v \|_{L^\eta}^{\frac{\eta_1}{2}},
\end{align}
where, by assumption, we have $a,b,c < \infty$ and $v \in W^{1,\eta}(\Om;V)$. 
Using the standard chain-rule to find the pair of partial derivatives
\begin{align*}
    \partial_t N_f(v)(t,x) = f'(v(t,x)) \partial_t v(t,x), \quad \partial_x N_f(v)(t,x) = f'(v(t,x)) \partial_x v(t,x),
\end{align*}
and applying the Hölder inequality again yields
\begin{align} \label{eq:boundedness_nemystskii_operator}
    \| N_f(v) \|_{H^1}^2 &= \| N_f(v) \|_{L^2}^2 + \| f'(v) \partial_t v \|_{L^2}^2 + \| f'(v) \partial_x v \|_{L^2}^2   \\
    & \leq   \| f(v) \|_{L^2}^2 + \|  f'(v) \|_{L^2}^2 \| \partial_t v \|_{L^2}^2  + \| f'(v)  \|_{L^2}^2 \| \partial_x v \|_{L^2}^2. \nonumber
\end{align}
On the other hand, from the assumption $\eta > 2$, we are guaranteed the existence of a constant $C > 0$ such that
\begin{align}\label{eq:estimate_sobolev_embedding}
\| v \|^2_{H^1} \leq C \| v \|^2_{W^{1,\eta}}, \quad v \in W^{1,\eta}(\Om;V).    
\end{align}
Together with the identity \eqref{eq:frechet_condition_simplified}, the estimates \eqref{eq:boundedness_nemystskii_operator} and \eqref{eq:estimate_sobolev_embedding} yield the desired
condition \eqref{eq:frechet_condition} as follows
\begin{align*}
  \lim\limits_{\| v \|_{W^{1,\eta}} \to 0} \frac{\| N_f(v) \|_{H^1}^2}{\| v \|_{W^{1,\eta}}^2} 
  & \leq \lim\limits_{\| v \|_{W^{1,\eta}} \to 0} C\frac{\| f(v) \|_{L^2}^2 + \|  f'(v) \|_{L^2}^2 \| \partial_t v \|_{L^2}^2  + \| f'(v)  \|_{L^2}^2 \| \partial_x v \|_{L^2}^2}{\| v \|_{L^2}^2 + \| \partial_t v \|_{L^2}^2 + \| \partial_x v \|_{L^2}^2} \\
  & \leq \lim\limits_{\| v \|_{W^{1,\eta}} \to 0} C\frac{\| f(v) \|_{L^2}^2}{\| v \|_{L^2}^2} + 2C\|  f'(v) \|_{L^2}^2 = 0, 
\end{align*}
where
\[
\lim\limits_{\| v \|_{W^{1,\eta}} \to 0}  \frac{\| f(v) \|_{L^2}}{\| v \|_{L^2}} = 0,
\]
follows from \eqref{def:lebesgue_embedding_order}, \eqref{eq:estimate_f} and
\[
\lim\limits_{\| v \|_{W^{1,\eta}} \to 0} \|  f'(v) \|_{L^2} = 0,
\]
follows from \eqref{eq:estimate_f'}.
\end{proof}
\vs
Finally, we introduce the two parameter family of operators 
$\mathscr F: \br \times \br \times \mathscr H \rightarrow \mathscr H$
\begin{align} \label{def:operator_F}
    \mathscr F(\alpha,\beta,u) := u - \mathscr L^{-1}(N_f(j(u)) + T(\beta)j(u) - A(\alpha)j(u) ),
\end{align}
where $T: \br \rightarrow L^\Gamma(V)$ is the continuous map
\[
T(\beta)u := u - \beta S_{\tau} u. 
\]
Notice that the equation
\begin{align} \label{eq:operator_equation}
\mathscr F(\alpha,\beta,u) = 0,
\end{align}
is equivalent to the system \eqref{eq:system} in the sense that $(\alpha,\beta,u) \in \br \times \br \times \mathscr H$ satisfies \eqref{eq:system} if and only if it is a solution to \eqref{eq:operator_equation}. In what follows, \eqref{eq:operator_equation} will be called the {\it operator equation} associated with the bifurcation problem \eqref{eq:system}.
\begin{remark} \label{rm:completely_continuous_field} \rm
From the compactness of $j: \mathscr H \rightarrow W^{1,\eta}(\Om; V)$, the continuity of $N_f: W^{1,\eta}(\Om;V) \rightarrow H^1(\Om;V)$ and, under condition \ref{a0}, the boundedness (cf. Lemma \ref{lemm:compact_L}) of $\mathscr L^{-1}|_{H^1(\Om; V)}: H^1(\Om; V) \rightarrow \mathscr H$, it follows that
$\id -\mathscr F(\alpha,\beta): \mathscr H \rightarrow \mathscr H$ is compact for all parameter pairs $(\alpha,\beta) \in \br \times \br$.
\end{remark}
\section{Local and Global Bifurcation in \eqref{eq:system}} \label{sec:local_global_bif}
In this section we present a general and replicable twisted-equivariant degree approach for solving a two-parameter symmetric bifurcation problem of the form \eqref{eq:operator_equation}. 
\vs
Consider the group
\[
G:= S^1 \times \bz_2 \times \bz_2 \times \Gamma,
\]
and notice that the space $\mathscr H$ is a natural Hilbert $G$-representation with respect to the isometric $G$-action given by
\begin{align} \label{def:isometric_G_action}
    (e^{i\theta}, \kappa_1, \kappa_2, \sigma)u(t,x) := \kappa_1 \sigma u(t + \theta, \kappa_2 x), \quad (t,x) \in \Om, \; u \in \mathscr H,
\end{align}
where $(\theta, \kappa_1, \kappa_2, \sigma) \in S^1 \times \bz_2 \times \bz_2 \times \Gamma$ and in particular, $\kappa_1,\kappa_2 = \pm 1$. Moreover, under assumptions \ref{a0}--\ref{a3}, the two-parameter family of operators $\mathscr F: \br \times \br \times \mathscr H \rightarrow \mathscr H$ $\rm (i)$ is $G$-equivariant with respect to the group action \eqref{def:isometric_G_action}, $\rm (ii)$ satisfies $\mathscr F(\alpha,\beta,0) = 0$ 
for all parameter pairs $(\alpha,\beta) \in \br \times \br$ and $\rm (iii)$ is differentiable at $0 \in \mathscr H$ with
    \[
    D_u \mathscr F(\alpha, \beta,0) = \id - \mathscr L^{-1}(T(\beta)-A(\alpha)): \mathscr H \rightarrow \mathscr H.
    \]
In what follows, we will use the notation
    \begin{align} \label{def:operator_A}
        \mathscr A(\alpha,\beta)u := u - \mathscr L^{-1}(T(\beta)u-A(\alpha)u), \quad u \in \mathscr H.
    \end{align}
The set of all solutions to the operator equation \eqref{eq:operator_equation} can be separated into the set of {\it trivial solutions}
\[
M := \{ (\alpha,\beta,0) \in \br \times \br \times \mathscr H \},
\]
and the set of {\it non-trivial solutions}
\[
\mathscr S := \{ (\alpha,\beta,u) \in \br \times \br \times \mathscr H: \mathscr  F(\alpha,\beta,u) = 0, \; u \neq 0 \}.
\]
Moreover, given any orbit type $(H) \in \Phi_1^t(G)$ (cf. Appendix \ref{sec:appendix_twisted_deg}) we can always consider the $H$-fixed-point set 
\[
\mathscr S^H := \{ (\alpha,\beta,u) \in \mathscr S : G_u \leq H \},
\]
consisting of all non-trivial solutions to \eqref{eq:operator_equation} with {\it symmetries at least $(H)$}, i.e. $(\alpha,\beta,u) \in \mathscr S^H$ if and only if $\mathscr F(\alpha,\beta,u) = 0$, $u \in \mathscr H \setminus  \{0\}$ and
\[
h u(t,x) = u(t,x) \text{ for all } h \in H \text{ and } (t,x) \in \Om.
\]
\subsection{The Local Bifurcation Invariant and Krasnosel'skii's Theorem}\label{sec:local-bif-inv}
For simplicity of notation, we identify $\br \times \br$ with the complex plane $\bc$ by associating each pair of parameters $(\alpha,\beta) \in \br \times \br$ with the complex number $\lambda := \alpha + i \beta$. In order to formulate a Krasnosel'skii-type local equivariant bifurcation result for the equation \eqref{eq:operator_equation}, it will be necessary to introduce a lexicon of bifurcation terminology, following \cite{book-new}: 
\vs
First, we clarify what is meant by a bifurcation of the equation \eqref{eq:operator_equation}:
\begin{definition} \rm
    A trivial solution $(\lambda_0,0) \in M$ is said to be a {\it bifurcation point} for \eqref{eq:operator_equation} if every open neighborhood of the point $(\lambda_0,0)$ has a non-empty intersection with the set of non-trivial solutions $\mathscr S$.
\end{definition}
It is well-known that a necessary condition for a trivial solution $(\lambda,0) \in M$ to be a bifurcation point for the equation \eqref{eq:operator_equation} is that the linear operator $\mathscr A(\lambda): \mathscr H \rightarrow \mathscr H$ is {\it not} an isomorphism. This leads to the following definition:
\begin{definition} \rm
    A trivial solution $(\lambda_0,0) \in M$ is said to be a \textit{regular point} for \eqref{eq:operator_equation} if $\mathscr A(\lambda_0): \mathscr H \rightarrow \mathscr H$ is an isomorphism and a \textit{critical point} otherwise. We call the set of all critical points 
\begin{align*} %\label{def:critical_set}
    \Lambda := \{ (\lambda,0) \in \br^2 \times \mathscr H : \mathscr A(\lambda): \mathscr H \rightarrow \mathscr H \text{ is not an isomorphism}\},    
\end{align*}
the {\it critical set} for \eqref{eq:operator_equation}. A critical point $(\lambda_0,0) \in \Lambda$ is said to be \textit{isolated} if there exists an $\varepsilon > 0$ neighborhood $B_{\varepsilon}(\lambda_0) := \{ (\lambda,0) \in \bc \times \mathscr H : \vert \lambda - \lambda_0 \vert < \varepsilon \}$ with
    \[ \overline{B_{\varepsilon}(\lambda_0)}\cap \Lambda = \{ (\lambda_0,0) \}.
    \]
\end{definition}
The next two definitions concern the continuation and symmetric properties of non-trivial solutions emerging from the critical set.
\begin{definition} \rm
 An isolated critical point $(\lambda_0,0) \in \Lambda$ is said to be a {\it branching point} for \eqref{eq:operator_equation} if there exists a non-trivial continuum $K \subset \overline{ \mathscr S}$ with $K \cap M = \{ (\lambda_0,0) \}$ and the maximal connected component $\mathscr C \subset \overline{\mathscr S}$ containing $K$ is called the \textit{branch} of nontrivial solutions with branching point $(\lambda_0,0)$.   
\end{definition}
\begin{definition} \rm
 A branch of solutions $\mathscr C$ is said to have {\it symmetries at least $(H)$} if there exists a non-trivial continuum $K \subset \overline{ \mathscr S}$ with $\mathscr C \cap \mathscr S^H = K$.
\end{definition}
Now that we have attended to the necessary preliminaries, let $(\lambda_0,0) \in \Lambda$ be an isolated critical point for \eqref{eq:operator_equation} with a deleted $\varepsilon$-neighborhood 
\begin{align} \label{def:deleted_epsilon_regular_nbhd}
\{ (\lambda,0) \in \br \times \br \times \mathscr H : 0 < \vert \lambda - \lambda_0 \vert < \varepsilon \},    
\end{align}
on which $\mathscr A(\lambda) : \mathscr H \rightarrow \mathscr H$ is an isomorphism, and choose $\upsilon > 0$ sufficiently small such that
\begin{align} \label{eq:lbi_admissibility_condition}
\{ (\lambda,u) \in \br \times \br \times \mathscr H : | \lambda - \lambda_0 | = \varepsilon, \; \Vert v \Vert_{\mathscr H} \leq \upsilon \} \cap \overline{\mathscr S} = \emptyset.    
\end{align}
We call the $G$-invariant set
\begin{align} \label{def:isolating_cylinder}
 \mathscr O := \{ (\lambda,u) \in \br \times \br \times \mathscr H : | \lambda - \lambda_0 | < \varepsilon, \; \Vert u \Vert_{\mathscr H} < \upsilon \},   
\end{align}
an {\it isolating cylinder} at $(\lambda_0,0)$ and 
a $G$-invariant function $\theta: \br \times \br \times \mathscr H \rightarrow \br$ is said to be an {\it auxiliary function} on $\mathscr O$ if it satisfies
\begin{align} \label{def:auxiliary_function_conditions}
   \begin{cases}
\theta(\lambda,0) < 0 \quad & \text{ for } |\lambda - \lambda_0 | = \varepsilon; \\
\theta(\lambda,u) > 0  \quad & \text{ for } |\lambda - \lambda_0 | \leq \varepsilon \text{ and } \Vert u \Vert_{\mathscr H} = \upsilon.
\end{cases}    
\end{align}
\begin{remark} \rm \label{rm:example_auxilliary_function}
For example, we can always use the auxiliary function
\begin{align} \label{def:auxiliary_function}
    \theta(\lambda,u) := \frac{\varepsilon}{2} - |\lambda - \lambda_0 | + \frac{2\varepsilon}{\upsilon} \| u \|_{\mathscr H}.
\end{align}   
\end{remark}
Given any auxiliary function $\theta$, the {\it complemented operator}
\begin{align} \label{def:complemented_operator_F}
\mathscr F_\theta : \br \times \br \times \mathscr H \rightarrow \br \times \mathscr H, \quad   \mathscr F_\theta(\lambda,u):= (\theta(\lambda,u),\mathscr F(\lambda,u)),
\end{align}
is an $\mathscr O$-admissible $G$-map (cf. \eqref{eq:lbi_admissibility_condition}, Appendix \ref{sec:appendix_twisted_deg}). We can now define the {\it local bifurcation invariant at $\lambda_0$}, as follows
\begin{align} \label{def:local_bif_inv}
\omega_{G}(\lambda_0) := \gdeg(\mathscr F_\theta,\mathscr O),
\end{align}
where $\gdeg$ indicates the twisted $G$-equivariant degree (cf. Appendix \ref{sec:appendix_twisted_deg}). The reader is referred to \cite{book-new,AED} for proof that the above construction of the local bifurcation invariant was independent of our choice of auxiliary function $\theta$. On the other hand, the following Krasnosel'skii-type local bifurcation result is a direct consequence of the existence property for the twisted equivariant degree (cf. Appendix \ref{sec:appendix_twisted_deg}).
\begin{theorem} \rm \label{thm:abstract_local_bif}
Suppose that $(\lambda_0,0) \in \Lambda$ is an isolated critical point for \eqref{eq:operator_equation} under conditions \ref{a0}--\ref{a3}. If there is a orbit type $(H) \in \Phi_1^t(G)$ for which 
\[
\operatorname{coeff}^H(\omega_{G}) \neq 0,
\]
(cf. \eqref{def:coefficient_operator_notation} for discussion  of the notation `$\operatorname{coeff}^H$' in the context of the $\bz$-module $A_1^t(G):= \bz[\Phi_1^t(G)]$) then there exists a branch $\mathscr C$ of non-trivial solutions to \eqref{eq:operator_equation} bifurcating from $(\lambda_0,0)$ with symmetries at least $(H)$.
\end{theorem}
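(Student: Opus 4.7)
The plan is to argue by contradiction using the existence property of the twisted $G$-equivariant degree, combined with a topological continuation argument to upgrade the resulting accumulation of non-trivial solutions into a genuine branch. Let $\mathscr O$ be any isolating cylinder at $(\lambda_0,0)$ and $\theta$ a corresponding auxiliary function as in \eqref{def:auxiliary_function_conditions}, so that by definition $\omega_G(\lambda_0) = \gdeg(\mathscr F_\theta,\mathscr O)$. The existence property of the twisted equivariant degree (cf.\ Appendix \ref{sec:appendix_twisted_deg}) asserts that if $\operatorname{coeff}^H(\omega_G(\lambda_0)) \neq 0$, then $\mathscr F_\theta$ admits a zero $(\lambda_*,u_*) \in \mathscr O$ whose isotropy group contains a conjugate of $H$. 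I claim this zero is necessarily non-trivial: the trivial zeros of $\mathscr F_\theta$ in $\mathscr O$ are of the form $(\lambda,0)$ with $|\lambda-\lambda_0| = \varepsilon/2$ and carry full isotropy $G$, and since $(G) \not\in \Phi_1^t(G)$ (the twisted orbit types being by construction those whose $G$-orbit is one-dimensional with a non-trivial $S^1$-component), such trivial zeros cannot contribute to a non-zero coefficient at $(H) \in \Phi_1^t(G)$. Hence $u_* \neq 0$ and $G_{u_*} \supseteq H'$ for some $H'$ conjugate to $H$.

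Having established that every isolating cylinder at $(\lambda_0,0)$ contains a non-trivial solution with symmetries at least $(H)$, I would iterate this observation for a shrinking sequence of cylinders to produce a sequence $(\lambda_n,u_n) \in \mathscr S^H$ converging to $(\lambda_0,0)$, exhibiting $(\lambda_0,0)$ as an accumulation point of $\mathscr S^H$. To promote this accumulation into a non-trivial continuum $K \subset \overline{\mathscr S^H}$ with $K \cap M = \{(\lambda_0,0)\}$, I invoke a standard Whyburn-type topological lemma applied to the compact set $\overline{\mathscr S^H} \cap \overline{\mathscr O}$ --- its compactness follows from the compact-perturbation-of-identity structure of $\mathscr F$ (cf.\ Remark \ref{rm:completely_continuous_field}) together with the isolation of $(\lambda_0,0)$ in $\Lambda$. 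The sought branch $\mathscr C$ is then the maximal connected component of $\overline{\mathscr S}$ containing $K$, and by construction $\mathscr C \cap \mathscr S^H \supseteq K$ is a non-trivial continuum, so $\mathscr C$ has symmetries at least $(H)$ in the sense of the paper's definition.

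The main technical obstacle lies in the Whyburn step --- extracting a connected sub-continuum of $\overline{\mathscr S^H}$ attached to $(\lambda_0,0)$. Although this is a classical manoeuvre in degree-theoretic bifurcation arguments, it depends essentially on both the local compactness of the non-trivial solution set and on the isolation of $(\lambda_0,0)$ within $\Lambda$; absent either ingredient, one is left with mere accumulation rather than a true branch. The degree-theoretic heart of the proof, by contrast, is entirely formulaic once the existence axiom is in hand: it amounts to writing down $\omega_G(\lambda_0)$, noting that its coefficient at $(H)$ is non-zero by hypothesis, and reading off the required symmetric zero directly from the existence property of $\gdeg$.
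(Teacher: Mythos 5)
The paper gives no proof of this theorem beyond the single sentence that it is ``a direct consequence of the existence property for the twisted equivariant degree,'' deferring the continuum-extraction argument entirely to \cite{book-new,AED}. Your proposal is therefore more detailed than the paper's own treatment, and it follows the same overall route those references take: use the existence axiom to detect a symmetric non-trivial zero, then use compactness, isolation of the critical point, and a Whyburn separation argument to obtain a continuum. Two remarks, one minor and one substantive.

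The minor point concerns your non-triviality argument. You observe that trivial zeros $(\lambda,0)$ of $\mathscr F_\theta$ carry full isotropy $(G) \notin \Phi_1^t(G)$, and conclude the detected zero must be non-trivial. The conclusion is right but the stated reason is incomplete: the existence axiom only asserts $(G_{x}) \geq (H)$, and $(G) \geq (H)$ always holds, so the axiom as stated does not by itself exclude the trivial zeros. What actually saves the argument is the \emph{construction} of $\gdeg$ via the cumulative $S^1$-degree in the recurrence formula \eqref{def:twisted_degree_recurrence}: $\deg_{S^1}$ counts only non-trivial $S^1$-orbits, so the set of $S^1$-fixed zeros (which contains the entire trivial line) contributes nothing to any coefficient of $\omega_G(\lambda_0) \in A_1^t(G)$. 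You should appeal to that mechanism directly rather than to the absence of $(G)$ from the index set.

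The substantive gap is in the Whyburn step. You write that you ``promote this accumulation into a non-trivial continuum'' by ``invoking a standard Whyburn-type topological lemma,'' with compactness of $\overline{\mathscr S^H}\cap\overline{\mathscr O}$ and isolation of $(\lambda_0,0)$ as the essential ingredients. That is not enough: a sequence $(\lambda_n,u_n)\to(\lambda_0,0)$ of isolated non-trivial solutions is perfectly compatible with compactness and isolation, and in that case $(\lambda_0,0)$ accumulates non-trivial solutions without belonging to any non-trivial continuum. Whyburn's lemma is a \emph{separation} theorem, not a promotion device; it enters a proof by contradiction in which the degree is invoked a second time. Concretely: assume the component $\mathscr C_0$ of $(\lambda_0,0)$ in $X := \overline{\mathscr S^H}\cap\overline{\mathscr O}$ is $\{(\lambda_0,0)\}$; Whyburn then splits $X$ into disjoint compacts $X_A \ni (\lambda_0,0)$ and $X_B \supset X\cap\partial\mathscr O$; one then manufactures a smaller $G$-invariant admissible region $\mathscr O'$ containing $X_A$, disjoint from $X_B$, and uses the additivity/excision/homotopy properties (together with the fact that the local bifurcation invariant is independent of the choice of isolating cylinder and auxiliary function) to conclude $\omega_G(\lambda_0) = \gdeg(\mathscr F_{\theta'},\mathscr O') = 0$, contradicting $\operatorname{coeff}^H(\omega_G(\lambda_0)) \neq 0$. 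Your ``absent either ingredient, one is left with mere accumulation'' is therefore incomplete: absent the second degree argument, one is \emph{always} left with mere accumulation, even with compactness and isolation in hand.
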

\subsection{The $G$-Isotypic Decomposition of Our Functional Space}\label{sec:G_isotypic_decomp}
In order to effectively make use Theorem \ref{thm:abstract_local_bif} to determine the existence of a branch of non-trivial solutions to \eqref{eq:operator_equation} bifurcating from the zero solution, we must derive a more practical formula for the computation of the local bifurcation invariant \eqref{def:local_bif_inv}. Our first step in this direction is to describe the $G$-isotypic decomposition of $\mathscr H$, followed by an analysis of the spectrum for the two-parameter family of linear operators $\mathscr A$. 
\vs
Assuming that a complete list of the irreducible $\Gamma$-representations $\{\mathcal V_j \}_{j=0}^r$ is made available, $V$ has the unique $\Gamma$-isotypic decomposition
\begin{align*} %\label{eq:Gamma_isotypic_decomp}
    V = V_0 \oplus V_1 \oplus \cdots \oplus V_r, \quad 
\end{align*}
where each $\Gamma$-isotypic component $V_j$ is modeled on the corresponding $\Gamma$-irreducible representation $\mathcal V_j$ in the sense that $V_j$ is equivalent to the direct sum of some finite number of copies of $\mathcal V_j$, i.e.
\[
V_j \simeq \mathcal V_j \oplus \cdots \oplus \mathcal V_j.
\]
The exact number of irreducible $\Gamma$-representations $\mathcal V_j$ `contained' in the $j$-th $\Gamma$-isotypic component, denoted $m_j \in \bn$, is called the {\it $\mathcal V_j$-isotypic multiplicity} of $V$ and can be calculated according to the ratio
\begin{align*}
m_j := \dim V_j / \dim \mathcal V_j.
\end{align*}
Since $A(\alpha): V \rightarrow V$ is $\Gamma$-equivariant for all $\alpha \in \br$, every $A(\alpha)$ admits the block matrix representation
\begin{align*} %\label{eq:Gamma_block_decomp}
  A(\alpha) = \bigoplus_{j=0}^r A_j(\alpha), \quad A_j(\alpha) :=A|_{V_j}(\alpha):V_j \rightarrow V_j. 
\end{align*}
To simplify our computations, we introduce an additional condition on the family of $\Gamma$-equivariant matrices $A: \br \rightarrow L^\Gamma(V)$:
\begin{enumerate} [label=($A_4$)] 
    \item\label{a4} there exists a continuous family of diagonal matrices $D: \br \rightarrow L^\Gamma(V)$ and a non-singular matrix $P \in GL^\Gamma(V)$ such that
    \[
    D(\alpha) = P^{-1}A(\alpha)P.
    \]
\end{enumerate}
Under Condition \ref{a4}, each $A_j(\alpha):V_j \rightarrow V_j$ admits $m_j$ continuous eigenvalues $\{\zeta_{j,k}:\br \rightarrow \br \}_{k=1}^{m_j}$
corresponding to the same number of eigenspaces $ E_{j,1}, E_{j,2}, \ldots, E_{j,m_j} \subset V$ where 
\[
V_j =  E_{j,1} \oplus  E_{j,2} \oplus \cdots \oplus  E_{j,m_j}, \quad E_{j,k} \simeq \mathcal V_j,
\]
and such that, for every $j \in \{0,1,\ldots, r\}$, $\alpha \in \br$, one has
\begin{align} \label{eq:Gamma_Z2Z2_block_matrix_decomp}
\renewcommand\arraystretch{1}
A_j(\alpha) &=
\begin{blockarray}{c@{\,}ccccc}
&\text {\scriptsize$E_{j,1} $}&\text {\scriptsize$E_{j,2}$} & \cdots &\;\; \text {\scriptsize$E_{j,m_j}$} &\\
\begin{block}{c@{\,}[cccc]c}
& \zeta_{j,1}(\alpha) \id_{\mathcal V_j} & 0 & \cdots & 0\smash[b]{\vphantom{\bigg|}} &\;\;\; \text {\scriptsize$E_{j,1}$ }\\
& 0& \zeta_{j,2}(\alpha) \id_{\mathcal V_j} & \cdots &0&\;\text {\scriptsize$E_{j,2}$} \\
& \vdots & \vdots & \ddots &\; \;\; \;\vdots &\vdots \\
&0 & 0& \cdots & \zeta_{j,m_j}(\alpha) \id_{\mathcal V_j}&\;\;\; \text {\scriptsize $E_{j,m_j}$} \\
\end{block}
\end{blockarray}  \nonumber \\
&= \bigoplus_{k=1}^{m_j} \zeta_{j,k}(\alpha) \id_{\mathcal V_j}. 
\end{align}
\vs
As $\Gamma$-subrepresentations of $V$, the eigenspaces $E_{j,k} \subset V$ can be made into natural $\bz_2 \times \bz_2 \times \Gamma$-representations with antipodal $\bz_2 \times \bz_2$-action. Specifically, if we denote by $\{\mathcal V_j^0 \}_{j=0}^r$ the list of irreducible $\bz_2 \times \bz_2 \times \Gamma$-representations equipped with the $\bz_2 \times \bz_2 \times \Gamma$-action
\begin{align} \label{def:z2_gamma_action}
(\kappa_1,\kappa_2,\sigma) u := \kappa_1 \sigma u, \quad u \in \mathcal V_{j}^0,
\end{align}
where $(\kappa_1,\kappa_2,\sigma) \in \bz_2 \times \bz_2 \times \Gamma$, $\kappa_1,\kappa_2 = \pm 1$ and by $\{\mathcal V_j^1 \}_{j=0}^r$ the list of irreducible $\bz_2 \times \bz_2 \times \Gamma$-representations on which both `copies' of $\bz_2$ act antipodally, i.e.
\begin{align} \label{def:z2z2_gamma_action}
(\kappa_1,\kappa_2,\sigma) u := \kappa_1 \kappa_2 \sigma u, \quad u \in \mathcal V_{j}^1,
\end{align}
then, for every  $j \in \{0,1,\ldots,r\}$ and $k \in \{1,2,\ldots, m_j \}$ we can write $E_{j,k} \simeq \mathcal V_j^0$ (resp. $E_{j,k} \simeq \mathcal V_j^1$) to indicate that the subspace $E_{j,k} \subset V$ has been equipped with the $\bz_2 \times \bz_2 \times \Gamma$-action \eqref{def:z2_gamma_action} (resp. \eqref{def:z2z2_gamma_action}). \vs
On the other hand, for each $m \in \bn$, we denote by $\mathcal W_m \simeq \bc$ the irreducible $S^1$-representation equipped with the {\it $m$-folding $S^1$-action}
\begin{align*} %\label{def:S1_action}
    e^{i \theta}w := e^{i m\theta} w, \quad \theta \in S^1, \; w \in \mathcal W_m,
\end{align*}
and by $\mathcal W_0 \simeq \br$, the irreducible $S^1$-representation on which $S^1$ acts trivially. If we recall the complex eigenspace \eqref{def:eigenspace_L_complex} associated with each eigenvalue $\xi_{m,n} \in \bc$ of the differential operator \eqref{def:operator_L} with $m \geq 0$, $n \in \bn$ and equip the corresponding real $\mathscr L$-invariant subspace
\begin{align*}
%\label{def:eigenspace_L_real}
    \mathscr E_{m,n} &:= \{ v_n(x)(e^{imt}c + e^{-imt} \overline{c}) : c \in V^c \} \\
    & = \{v_n(x)(\cos(mt)a+\sin(mt)b): a,\; b\in V\} \subset \mathscr H, %\nonumber
\end{align*} 
with the diagonal $m$-folding $S^1$-action
\begin{align*} %\label{def:S1_diagonal_action}
 e^{i\theta} u(t,x) := u(t+m \theta,x), \quad (t,x) \in \Om, \; u \in \mathscr E_{n,m},
\end{align*}
one has
\[
\mathscr E_{n,m} \simeq \mathcal W_m \otimes V, \quad (m,n) \in \bn \cup \{0\} \times \bn,
\]
such that the $S^1$-isotypic decomposition of $\mathscr H$ is given by
\begin{align*} %\label{eq:S1_isotypic_decomp}
  \mathscr H := \overline{\bigoplus_{m = 0}^{\infty} \mathscr H_m}, \quad \mathscr H_m := \overline{\bigoplus_{n=1}^{\infty} \mathscr E_{m,n}},  
\end{align*}
where the closure is taken in $\mathscr H$. 
\vs
In the standard way, the $G$-isotypic components of $\mathscr H$ are given as $\Gamma \times \bz_2 \times \bz_2$ {\it refinements} of the $S^1$-isotypic components $\mathscr H_m$. Namely, for every $(m,n) \in \bn \cup \{0\} \times \bn$ and $j \in \{0,1,\ldots,r\}$ we define the $G$-invariant subspace
\begin{align*} %\label{def:E_mn^j}
\mathscr E_{m,n,j} := \bigoplus_{k=1}^{m_j} \mathscr E_{m,n,j,k}, \quad \mathscr E_{m,n,j,k} := \left\{ v_n(x)(\cos(mt) a + sin(mt) b)  : a, b \in E_{j,k} \right\}.
\end{align*}
Clearly, for each $(m,n) \in \bn \cup \{0\} \times \bn$, $j \in \{0,1,\ldots,r\}$ and $k \in \{1,2,\ldots, m_j \}$ one has
\[
\mathscr E_{m,n,j,k} \simeq \mathcal W_m \otimes \mathcal V_{j}^{[2 \nmid n]}, 
%\text{ and } \mathscr E_{m,n,j} \simeq \mathcal W_m \otimes \mathcal V_{j}^{[2 \nmid n]}.
\]
where, for convenience of notation, we have employed the Iverson brackets to associate any logical predicate $P$ with an element of the set $\{0,1\}$ according to the rule 
\begin{align*} %\label{Iverson_bracket}
    [P] := \begin{cases}
        1 \quad & \text{ if } P \text{ is true}; \\
        0 \quad & \text{ otherwise},
    \end{cases}
\end{align*}
such that the $G$-isotypic decomposition of $\mathscr H$ can be described in terms of the $G$-isotypic components
\begin{align} \label{eq:G_isotypic_component}
\mathscr H_{m,j}^{0} := \overline{\bigoplus_{n \in 2 \bn} \mathscr E_{m,n,j}}, \text{ and } \mathscr H_{m,j}^{1} := \overline{\bigoplus_{n \in 2 \bn + 1} \mathscr E_{m,n,j}},
\end{align}
as follows
\begin{align} \label{eq:G_isotypic_decomp}
  \mathscr H = \overline{\bigoplus_{j = 0}^r \bigoplus_{m = 0}^{\infty} \mathscr H_{m,j}^{0} \oplus \mathscr H_{m,j}^{1}}.  
\end{align}
To be clear, each of the $G$-isotypic components
$\mathscr H_{m,j}^{0}$ (resp. $\mathscr H_{m,j}^{1}$) is modeled on the irreducible $G$-representation $\mathcal V^0_{m,j} := \mathcal W_m \otimes \mathcal V_j^0$ (resp. $\mathcal V^1_{m,j} := \mathcal W_m \otimes \mathcal V_j^1$). Notice, in particular, that for $m = 0$ one has $\mathcal W_m \simeq \br$ such that $\mathcal V_{m,j}^0 \simeq \mathcal V_{j}^0$ (resp. $\mathcal V_{m,j}^1 \simeq \mathcal V_{j}^1$) for all $j \in \{0,1,\ldots,r\}$.
\vs
With the $G$-isotypic decomposition of $\mathscr H$ at hand, we can begin to describe the spectral properties of the operator \eqref{def:operator_A}. For example, by Schur's Lemma, we know that the $G$-equivariant linear operator $\mathscr A(\alpha, \beta): \mathscr H \rightarrow \mathscr H$ respects the $G$-isotypic decomposition \eqref{eq:G_isotypic_decomp} in the sense that $\mathscr A(\alpha,\beta) (\mathscr H_{m,j}^0) \subset \mathscr H_{m,j}^0$ and $\mathscr A(\alpha,\beta) (\mathscr H_{m,j}^1) \subset \mathscr H_{m,j}^1$ for all $m \geq 0$ and $j \in \{0,1,\ldots,r\}$.
Under Assumption \ref{a4}, one also has
\[
\mathscr A(\alpha,\beta)(\mathscr E_{m,n,j,k}) \subset \mathscr E_{m,n,j,k}, 
\]
for each $(\alpha,\beta) \in \br \times \br$, $(m,n) \in \bn \cup \{0\} \times \bn$, $j \in \{0,1,\ldots,r\}$ and $k \in \{1,2,\ldots,m_j\}$. 
\vs
Adopting the notations
\begin{align*} %\label{def:operator_Amnjk}
\begin{cases}
 \mathscr A_{m}(\alpha,\beta) := \mathscr A(\alpha,\beta)|_{\mathscr H_{m}}: \mathscr H_{m} \rightarrow \mathscr H_{m}, \\
 \mathscr A_{m,n,j,k}(\alpha,\beta) := \mathscr A(\alpha,\beta)|_{\mathscr E_{m,n,j,k}}: \mathscr E_{m,n,j,k} \rightarrow \mathscr E_{m,n,j,k}, 
\end{cases}
\end{align*}
every $\mathscr A(\alpha,\beta)$ admits the block-matrix decomposition
\begin{align*} %\label{eq:operator_block_decomposition_A}
    \mathscr A(\alpha,\beta) = \bigoplus_{m=0}^\infty \mathscr A_m(\alpha,\beta), \quad \mathscr A_m(\alpha,\beta) =
    \bigoplus_{n=1}^\infty \bigoplus_{j=0}^r \bigoplus_{k=1}^{m_j} \mathscr A_{m,n,j,k}(\alpha,\beta),
\end{align*}
where each of the matrices $\mathscr A_{m,n,j,k}(\alpha,\beta)$ is of the form
\begin{align*} %\label{eq:operator_Amnjk}
\mathscr A_{m,n,j,k}(\alpha,\beta) = \mu_{m,n,j,k}(\alpha,\beta) \id_{\mathcal V_{m,j}^{[2 \nmid n]}}: \mathscr E_{m,n,j,k} \rightarrow \mathscr E_{m,n,j,k},
\end{align*}
with a family of complex eigenvalues $\mu_{m,n,j,k}: \br \times \br \rightarrow \bc$ 
\begin{align} \label{def:eigenvalues_Anmjk}
\mu_{m,n,j,k}(\alpha,\beta) := 1 - \frac{ 1 - \beta e^{-i m \tau} -\zeta_{j,k}(\alpha)}{\xi_{m,n}} = \frac{-\nu^2m^2 + n^2 + i \delta m + \zeta_{j,k}(\alpha) + \beta e^{-i m \tau}}{\xi_{m,n}},
\end{align}
obtained by the computation
\begin{align*}
    T(\beta) e^{imt} = (1-\beta e^{-im\tau})e^{imt},
\end{align*}
together with the spectral descriptions \eqref{def:eigenvalues_L} and \eqref{eq:Gamma_Z2Z2_block_matrix_decomp} for the operator $\mathscr L$ and the matrix $A(\alpha)$, respectively.
In this way, we have described the spectrum of $\mathscr A(\alpha,\beta)$, in terms of the eigenvalues \eqref{def:eigenvalues_Anmjk}, as follows
\begin{align*}
    \sigma(\mathscr A(\alpha,\beta)) = \left\{ \bigcup_{m=0}^\infty \bigcup_{n=1}^\infty \bigcup_{j=0}^r \bigcup_{k = 1}^{m_j}
 \mu_{m,n,j,k}(\alpha,\beta) \right \}.
\end{align*}
\begin{remark} \label{rm:critical_values} \rm
A trivial solution $(\alpha_0,\beta_0,0) \in \br \times \br \times \mathscr H$ belongs to the critical set of \eqref{eq:operator_equation} if and only if $0 \in \sigma(\mathscr A(\alpha_0,\beta_0))$,
i.e. $(\alpha_0,\beta_0,0) \in \Lambda$ if and only if 
there exist $(m,n) \in \bn \cup \{0\} \times \bn$, 
$j \in \{0,1,\ldots,r\}$, and $k \in \{1,2,\ldots,m_j\}$
for which both
\[
\alpha_0 = \zeta_{j,k}^{-1}(\nu^2 m^2 - n^2 - \delta m \cot(m \tau)),
\]
and
\[
\beta_0 = \frac{\delta m}{\sin(m \tau)}.
\]
\end{remark}
\subsection{The $s$-Folding Homomorphism}
Before formulating and proving our main local equivariant bifurcation result, we must consider some properties of the basic degree and the orbit types of maximal kind in $\Phi_1^t(G)$ (cf. Appendix \ref{sec:appendix_twisted_deg}). For a more thorough exposition of these topics, our readers are referred to \cite{AED, book-new}.
\vs
Let's begin by associating to each $s \in \bn$
the {\it $s$-folding homomorphism} 
\begin{align*} %\label{def:s_folding_homomorphism_S1}
    \phi_s: S^1 \rightarrow S^1/\bz_s \simeq S^1, \quad \phi_s( \theta) := s\theta.
\end{align*}
There is a natural extension of $\phi_s$ to the Lie Group homomorphism
$\psi_s: S^1 \times \bz_2 \times \bz_2 \times \Gamma \rightarrow S^1 \times \bz_2 \times \bz_2 \times \Gamma$ given by
\begin{align*} %\label{def:s_folding_homomorphism_G}
    \psi_s( \theta,\kappa_1,\kappa_2, \sigma) := (s\theta,\kappa_1,\kappa_2, \sigma).
\end{align*}
In turn, each $\psi_s$ induces a $\bz$-module homomorphism $\Psi_s: A_1^t(G) \rightarrow A_1^t(G)$, defined on the generators $(H) \in \Phi_1^t(G)$, as follows
\begin{align*} %\label{def:s_folding_A1tG}
    \Psi_s(H) := ({}^{s}H), \quad {}^{s}H := \psi^{-1}_s(H).
\end{align*}
Notice that, for any $(m,n) \in \bn \cup \{0\} \times \bn$ and $j \in \{0,1,\ldots,r\}$, there is the following relation between twisted basic degrees
\begin{align*}
%\label{eq:s_folding_basicdeg}
    \Psi_s(\deg_{\mathcal V_{m,j}^{[2 \nmid n]}}) = \deg_{\mathcal V_{sm,j}^{[2 \nmid n]}}.
\end{align*}
\begin{remark} \rm
Since any orbit type which is maximal in $\Phi_1^t(G;\mathscr H \setminus \{0\})$ must also maximal in $\Phi_1^t(G;\mathscr H_{0} \setminus \{0\})$, any non-trivial function $u \in \mathscr H \setminus \{0\}$ with 
an isotropy $G_u \leq G$ such that $(G_u)$ is maximal in $\Phi_1^t(G;\mathscr H \setminus \{0\})$ must be \textit{stationary}. 
In order to detect branches of solutions to \eqref{eq:operator_equation} corresponding to maps which are both non-trivial and non-stationary, we must restrict our focus to orbit types which are maximal in $\Phi_1^t(G;\mathscr H_{m} \setminus \{0\})$ for some positive $m \in \mathbb{N}$. Such orbit types are said to be of {\it maximal kind}.
\end{remark} \rm \label{rm:maximal_kind_orbit_types}
We denote by $\mathfrak M_m$ the set of maximal elements in the isotropy lattice $\Phi_1^t(G; \mathscr H_{m} \setminus \{0\})$ and by $\mathfrak M_{m,j}^{0}$ (resp. $\mathfrak M_{m,j}^{1}$) the set of orbit types $\Phi_1^t(G; \mathscr H_{m,j}^0\setminus \{0\}) \cap \mathfrak M_m$ (resp. in $\Phi_1^t(G; \mathscr H_{m,j}^1\setminus \{0\}) \cap \mathfrak M_m$). Since every $(H) \in \mathfrak M_m$ is also an orbit type in either $\Phi_1^t(G; \mathscr H_{m,j}^0 \setminus \{0\})$ or $\Phi_1^t(G; \mathscr H_{m,j}^1 \setminus \{0\})$ for at least one $j \in \{0,1,\ldots, r\}$ we can write
\begin{align*}
    \mathfrak M_m = \bigcup_{j=0}^r \mathfrak M_{m,j}^0 \cup \mathfrak M_{m,j}^1.
\end{align*}
Clearly, one has the relation
\[
\Psi_s(\mathfrak M_{m,j}^{[2 \nmid n]}) = \mathfrak M_{sm,j}^{[2 \nmid n]},
\]
for all $m,n,s \in \bn$ and $j \in \{0,1,\ldots,r\}$. 
\vs
The following two Lemmas are a direct consequence of these observations.
\begin{lemma} \label{lemm:distjoint_maxorbtyp_sets}
$\mathfrak M_m \cap \mathfrak M_{m'} = \emptyset$ for all $m,m' \in \bn$ with $m \neq m'$.  
\end{lemma}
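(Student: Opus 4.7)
The plan is to exploit the direct-product structure $G = S^1 \times \bz_2 \times \bz_2 \times \Gamma$ in order to extract from each class $(H) \in \Phi_1^t(G)$ a conjugation invariant that distinguishes the membership of $(H)$ in $\mathfrak M_m$ versus $\mathfrak M_{m'}$. Specifically, I will attach to every closed subgroup $H \leq G$ the closed subgroup $H \cap S^1 \leq S^1$ and show that $(H) \in \mathfrak M_m$ forces $H \cap S^1 = \bz_m$; since $\bz_m \neq \bz_{m'}$ in $S^1$ whenever $m \neq m'$, the conclusion follows immediately.

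The first step is to check that $H \cap S^1$ is a genuine conjugacy invariant. Because $G$ is a \emph{direct} product of $S^1$ with the remaining factors, the circle $S^1 \leq G$ is central and conjugation by any $g \in G$ acts trivially on $S^1$; consequently $gHg^{-1}\cap S^1 = g(H\cap S^1)g^{-1} = H \cap S^1$, and the assignment $(H) \mapsto H \cap S^1$ descends to the set of conjugacy classes.

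The second step is the calculation of $G_u \cap S^1$ for an arbitrary $u \in \mathscr H_m \setminus \{0\}$ with $m \in \bn$. Using the description of $\mathscr H_m$ provided in Section \ref{sec:G_isotypic_decomp}, I write $u(t,x) = \cos(mt)\,a(x) + \sin(mt)\,b(x)$ with the pair $(a,b)$ not identically zero. Expanding the fixed-point condition $e^{i\theta}u(t,x) = u(t+\theta,x) = u(t,x)$ via the addition formulas realises it as the identity (for all $x$) of the planar rotation by angle $m\theta$ applied to the nonzero vector-valued pair $(a(x), b(x))$, which forces $m\theta \in 2\pi\bz$, i.e. $e^{i\theta} \in \bz_m$. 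The reverse inclusion $\bz_m \leq G_u \cap S^1$ is automatic because $\bz_m$ is precisely the kernel of the $m$-folding $S^1$-action on $\mathscr H_m$. Hence $G_u \cap S^1 = \bz_m$ for every nonzero $u \in \mathscr H_m$, and therefore every $(H) \in \Phi_1^t(G; \mathscr H_m \setminus \{0\})$ satisfies $H \cap S^1 = \bz_m$.

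Putting the two steps together, if $(H) \in \mathfrak M_m \cap \mathfrak M_{m'}$ then simultaneously $H \cap S^1 = \bz_m$ and $H \cap S^1 = \bz_{m'}$, which is impossible for $m \neq m'$. I expect no substantial obstacle: the only computation of substance is the rotation argument pinning down $G_u \cap S^1$ as \emph{exactly} $\bz_m$ (rather than some strictly larger subgroup of $S^1$), and this follows at once from the fact that the minimal Fourier frequency occurring in a nonzero element of $\mathscr H_m$ is $m$ itself.
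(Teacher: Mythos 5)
Your proof is correct, and it takes a more direct and elementary route than the paper's. The paper proves the claim by invoking the $s$-folding apparatus: the identity $\Psi_s(\mathfrak M_{1,j}^{[2\nmid n]}) = \mathfrak M_{s,j}^{[2\nmid n]}$ together with the injectivity of $\Psi_s$ is used to argue that $(H_0)\in\mathfrak M_m\cap\mathfrak M_{m'}$ would pull back to a single $(H)\in\mathfrak M_1$ satisfying $\Psi_m(H)=\Psi_{m'}(H)=(H_0)$, at which point the contradiction is left implicit. Your argument makes that implicit step explicit and in fact renders the folding machinery unnecessary: you identify $H\mapsto H\cap S^1$ as a conjugation invariant (valid because $S^1$ is central in $G$), and then use the Fourier description $u = \cos(mt)\,a(x)+\sin(mt)\,b(x)$ of elements in $\mathscr H_m$ to compute $G_u\cap S^1 = \bz_m$ exactly. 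Since the preimage ${}^mH = \psi_m^{-1}(H)$ always contains $\ker\psi_m = \bz_m$, your invariant is precisely what underlies the paper's assertion that distinct foldings cannot produce conjugate subgroups; you have simply extracted it cleanly. The one place worth a word of care is the passage from ``the minimal Fourier frequency in $\mathscr H_m$ is $m$'' to ``rotation by $m\theta$ fixes a nonzero planar vector'': since $a(x),b(x)\in V=\mathbb R^N$, you should note that some component $(a_i(x),b_i(x))$ is nonzero for some $x$, and the rotation $R(m\theta)$ acts diagonally on each such pair, which is exactly what forces $m\theta\in 2\pi\mathbb Z$. This is implicit in your write-up and does not constitute a gap.
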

\begin{proof}
Indeed, since one has $\Psi_s(\mathfrak M_{1,j}^{[2 \nmid n]}) = \mathfrak M_{s,j}^{[2 \nmid n]}$, 
every orbit type $(H_0) \in \mathfrak M_{s,j}^{[2 \nmid n]}$ can be uniquely recovered from an orbit type in $\mathfrak M_{1,j}^{[2 \nmid n]}$ by the relation $(H):= \Psi_s^{-1}(H_0)$. Therefore, for any $(H_0) \in \mathfrak M_m \cap \mathfrak M_{m'}$, there exists an $(H) \in \mathfrak M_1$ with $\Psi_m(H) = \Psi_{m'}(H) = (H_0)$, and the result follows.
\end{proof}
\begin{lemma} \rm \label{lemm:basicdegree_maxorbtyps_coeff}
For any $m, > 0$, $n \in \bn$, $j \in \{0,1,\ldots,r\}$
and $(H) \in \mathfrak M_{m,j}^{[2 \nmid n]}$, one has
    \begin{align} \label{eq:non_triviality_mth_folding_coeff}
\operatorname{coeff}^{H}(\deg_{\mathcal V_{m,j}^{[2 \nmid n]}}) > 0.
\end{align}
\end{lemma}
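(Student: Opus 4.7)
The plan is to reduce the coefficient computation to a single Brouwer degree on the fixed-point subspace $(\mathcal V_{m,j}^{[2 \nmid n]})^H$, and then to establish positivity there by exploiting the natural complex structure inherited from the factor $\mathcal W_m \simeq \bc$. The argument rests on the standard recursive formula for the coefficients of a basic degree in the twisted equivariant degree theory, as presented in Appendix \ref{sec:appendix_twisted_deg} and developed in \cite{book-new,AED}.

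First, I would recall the recurrence relation: for any orbit type $(K)$ in the isotropy lattice of an irreducible $G$-representation $\mathcal V$, one has
\[
\operatorname{coeff}^K(\deg_{\mathcal V}) \; = \; \frac{1}{|W(K)|}\Bigl( \deg_B\bigl(h|_{\mathcal V^K}\bigr) \; - \sum_{(L) > (K)} n(K,L)\, \operatorname{coeff}^L(\deg_{\mathcal V})\, |W(L)| \Bigr),
\]
where $h$ denotes the canonical normal map used to define $\deg_{\mathcal V}$ and $\deg_B$ is the classical Brouwer degree. Since by hypothesis $(H) \in \mathfrak M_{m,j}^{[2 \nmid n]}$ is a maximal element of the isotropy lattice, the corrective sum vanishes and the recurrence collapses to
\[
\operatorname{coeff}^H(\deg_{\mathcal V_{m,j}^{[2 \nmid n]}}) \; = \; \frac{\deg_B\bigl(h|_{(\mathcal V_{m,j}^{[2 \nmid n]})^H}\bigr)}{|W(H)|}.
\]

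Second, I would analyze the structure of the fixed-point subspace. Because $m \geq 1$ and $(H)$ is of maximal kind (i.e. $(H) \in \mathfrak M_m$, not merely in the isotropy lattice of $\mathscr H_0$), the subgroup $H \leq G$ necessarily contains a nontrivial $S^1$-twist coupled with a nontrivial element of $\bz_2 \times \bz_2 \times \Gamma$. This twist forces $(\mathcal V_{m,j}^{[2 \nmid n]})^H$ to inherit a natural complex (hence even real dimensional) structure from $\mathcal W_m \simeq \bc$. The canonical normal map $h$ used in the construction of $\deg_{\mathcal V_{m,j}^{[2 \nmid n]}}$ then restricts to a map of positive winding number on this complex fixed-point subspace, so that $\deg_B(h|_{(\mathcal V_{m,j}^{[2 \nmid n]})^H})$ is a strictly positive integer. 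Dividing by the positive integer $|W(H)|$ preserves positivity and yields \eqref{eq:non_triviality_mth_folding_coeff}.

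The main obstacle will be the structural step: verifying, uniformly for every maximal kind orbit type $(H)$ in the isotropy lattices of both $\mathcal V_{m,j}^0$ and $\mathcal V_{m,j}^1$, that the fixed-point subspace $(\mathcal V_{m,j}^{[2 \nmid n]})^H$ is nonzero and carries a genuine complex structure compatible with the twisted action. This amounts to a careful case-by-case inspection of how the $m$-folding action of $S^1$ interacts with the $\bz_2 \times \bz_2 \times \Gamma$-refinement recorded by the superscript $[2 \nmid n]$. Once this structural claim is established, the positivity of $\deg_B$ on a complex subspace and the positivity of $|W(H)|$ together give the required inequality.
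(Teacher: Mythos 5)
Your overall strategy---invoke the recurrence formula for the (twisted) basic degree, note that maximality of $(H)$ kills the corrective sum, and then argue positivity of the surviving term---is exactly the route the paper takes.  However, there are two concrete issues in the execution.

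First, the recurrence you write down is the one for the \emph{Brouwer} $G$-equivariant degree (Appendix~\ref{sec:appendix_eqdeg}), with numerator $\deg_B(h|_{\mathcal V^K})$ and normalizing denominator $|W(K)|$.  Since $(H)\in\Phi_1^t(G)$ is a twisted subgroup $K^{\varphi,l}$ with $l\geq 1$, its Weyl group $W(H)$ contains $S^1$ and is therefore infinite; dividing by $|W(H)|$ is ill-defined.  The correct recurrence here is the one from Appendix~\ref{sec:appendix_twisted_comp_form}, which divides by $|W(H)/S^1|$ and whose numerator for the basic degree is $\tfrac{1}{2}\dim (\mathcal V_{m,j}^{[2\nmid n]})^H$ (equivalently, the \emph{cumulative} $S^1$-degree $\deg_{S^1}(b^H,\mathscr O^H)$, which is what your ``winding number'' argument actually computes, not the ordinary Brouwer degree $\deg_B$).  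After substituting the twisted recurrence and killing the sum by maximality, one gets directly
\[
\operatorname{coeff}^H(\deg_{\mathcal V_{m,j}^{[2\nmid n]}}) = \frac{\dim (\mathcal V_{m,j}^{[2\nmid n]})^H}{2\,|W(H)/S^1|},
\]
which is the paper's formula.

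Second, what you call ``the main obstacle''---showing that the fixed-point subspace is nonzero---does not require any case-by-case structural inspection of the $S^1$-twist.  It follows at once from the definition of orbit type: $(H)\in \mathfrak M_{m,j}^{[2\nmid n]}\subset \Phi_0(G;\mathscr H_{m,j}^{[2\nmid n]}\setminus\{0\})$ means there is a nonzero $u\in\mathscr H_{m,j}^{[2\nmid n]}$ with $(G_u)\geq (H)$, hence $(\mathcal V_{m,j}^{[2\nmid n]})^H\neq\{0\}$.  (The complex structure you note is correct and is precisely why the numerator is $\tfrac12\dim V^H$ rather than $\dim V^H$, but the nonvanishing itself is immediate, not a remaining obstacle.)  With these two corrections your argument becomes the paper's proof.
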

\begin{proof}
Indeed, from the maximality of $(H)$, the recurrence formula for the twisted equivariant degree implies  
    \[
\operatorname{coeff}^{H}(\deg_{\mathcal V_{m,j}^{[2 \nmid n]}}) =  \frac{\dim {\mathcal V_{m,j}^{[2 \nmid n]}}^{H}}{2 |W(H)/S^1|},
    \]
where ${\mathcal V_{m,j}^{[2 \nmid n]}}^{H} \neq \{ 0 \}$ follows from the fact that,
since $(H) \in \Phi_0(G; \mathscr H_{m,j}^{[2 \nmid n]} \setminus \{0\})$, there exists a function $u \in \mathscr H_{m,j}^{[2 \nmid n]} \setminus \{0\}$ with an isotropy $G_x \leq G$ satisfying $(G_x) \geq (H)$.
\end{proof}
\subsection{Computation of the Local Bifurcation Invariant} \label{sec:computation_local_bif_inv}
As before, let $(\lambda_0,0) \in \Lambda$ be an isolated critical point for \eqref{eq:operator_equation} with a deleted $\varepsilon$-neighborhood \eqref{def:deleted_epsilon_regular_nbhd} on which $\mathscr A(\lambda):\mathscr H \rightarrow \mathscr H$ is an isomorphism and suppose that a number $\upsilon > 0$ is chosen such that
\[
\mathscr F(\lambda,v) \neq 0, \text{ for all } (\lambda,v) \in \br \times \br \times \mathscr H \text{ with } |\lambda - \lambda_0| = \epsilon \text{ and } 0 < \| v \|_{\mathscr H} \leq \upsilon.
\]
%$\mathscr F(\lambda,v) \neq 0$ for all $(\lambda,v) \in \br \times \br \times \mathscr H$ with $|\lambda - \lambda_0| = \epsilon$ and $0 < \| v \|_{2} \leq \delta$.
Then, for any auxiliary function $\theta:\br \times \br \times \mathscr H \rightarrow \br$ satisfying conditions \eqref{def:auxiliary_function_conditions} on the isolating cylinder \eqref{def:isolating_cylinder} (in particular, for the auxiliary function \eqref{def:auxiliary_function}), the complemented operator \eqref{def:complemented_operator_F} is $\mathscr O$-admissibly $G$-homotopic to the linear operator
\begin{align*}
    \hat{\mathscr F_\theta}(\lambda,u) := (\theta(\lambda,u),\mathscr A(\lambda)u). 
    %
    %\\ &= \bigoplus_{n=1}^\infty \bigoplus_{j=0}^r \bigoplus_{k=1}^{m_j} \mathscr A_{0,n,j,k}(\lambda)u \times
    %(\theta(\lambda,u), \bigoplus_{m=1}^\infty \bigoplus_{n=1}^\infty \bigoplus_{j=0}^r \bigoplus_{k=1}^{m_j} \mathscr A_{m,n,j,k}(\lambda)u).
\end{align*}    
%$\mathscr A_{m,n,j,k}(\alpha,\beta) \mathscr = \mu_{m,n,j,k}(\alpha,\beta) \times \id_{\mathcal U_{m,j}^{[2\nmid n]}}: \bc \times \mathscr E_{m,n,j,k} \rightarrow \bc \times \mathscr E_{m,n,j,k}$ is $\tilde{\mathscr O}_{m,n,j,k}:= \{(\lambda,u) \in \hat{\mathscr O}$-homotopic 
Adopting the notations
\begin{align*}
    \begin{cases}
        \tilde{\theta}(\lambda,u) := 
        \theta|_{\bc \times \bigoplus_{m=1}^\infty \mathscr H_{m}}(\lambda,u);\\
        \tilde{\mathscr A}(\lambda)u := \left(\tilde{\theta}(\lambda,u),\bigoplus_{m=1}^\infty \mathscr A_{m}(\lambda_0)\right); \\
        \tilde{\mathscr O} :=  \mathscr O \cap \bc \times \bigoplus_{m=1}^\infty \mathscr H_{m},
    \end{cases}
\end{align*}
and applying the homotopy property of the twisted $G$-equivariant degree (cf. Appendix \ref{sec:appendix_twisted_deg}),
the local bifurcation invariant \eqref{def:local_bif_inv} at the isolated critical point $(\lambda_0,0)$ becomes
\begin{align*} %\label{eq:local_bif_inv_reformulation1}
    \omega_G(\lambda_0) 
    = \gdeg \left(
    \tilde{\mathscr A}(\lambda_0) \times \mathscr A_{0}(\lambda_0), \;  \tilde{\mathscr O} 
 \times B(\mathscr H_{0})  \right). 
\end{align*}
%\[
%\tilde{\mathscr O}_{m,n,j,k} := \{(\lambda,u) \in \br \times \br \times \mathscr H: u \in \mathscr E_{m,n,j,k} \}.
%\]
We must impose an additional non-degeneracy assumption on the isolated critical point $(\lambda_0,0) = (\alpha_0,\beta_0,0) \in \Lambda$ in order to proceed with our computation of $\omega_G(\lambda_0) \in A_1^t(G)$:
\begin{enumerate} [label=($B_1$)] 
    \item\label{b1} For all $n \in \bn$, $j \in \{0,1,\ldots,r\}$ and $k \in \{1,2,\ldots,m_j \}$, one has
    \[
    \mu_{0,n,j,k}(\alpha_0,\beta_0) \neq 0.
    \]
\end{enumerate}
Under condition \ref{b1}, each of the matrices $\mathscr A_{0,n,j,k}(\lambda_0): \mathscr E_{0,n,j,k} \rightarrow \mathscr E_{0,n,j,k}$ is a $\bz_2 \times \bz_2 \times \Gamma$-equivariant linear isomorphism. Hence,  $(\mathscr A_0, B(\mathscr H_0))$ constitutes 
an admissible $\bz_2 \times \bz_2 \times \Gamma$-pair and, by the product property of the twisted $G$-equivariant degree (cf. Appendix \ref{sec:appendix_twisted_deg}), the local bifurcation invariant can be expressed as the $A(\bz_2 \times \bz_2 \times \Gamma)$-module product
\begin{align*} 
%\label{eq:local_bif_inv_reformulation2}
    \omega_G(\lambda_0) 
    = \Gammadeg \left( \mathscr A_{0}(\lambda_0) ,  B(\mathscr H_0) \right) \cdot
\gdeg\left( \tilde{\mathscr A}, \tilde{\mathscr O} \right),
\end{align*}
In turn, applying the product property of the $\bz_2 \times \bz_2 \times \Gamma$-equivariant degree (cf. \ref{sec:appendix_eqdeg})
\begin{align*}
%\label{eq:local_bif_inv_Gamma_product_part}
\Gammadeg &\left( \mathscr A_{0}(\lambda_0) , B(\mathscr H_{0}) \right) =\Gammadeg \left( \bigoplus_{n=1}^\infty \bigoplus_{j=0}^r \bigoplus_{k=1}^{m_j}  \mathscr A_{0,n,j,k}(\lambda_0) ,  \bigoplus_{n=1}^\infty \bigoplus_{j=0}^r \bigoplus_{k=1}^{m_j} B(\mathscr E_{0,n,j,k}) \right) \\ &= \prod_{n = 1}^\infty \prod_{j=0}^r \prod_{k=1}^{m_j} \Gammadeg(\mathscr A_{0,n,j,k}(\lambda_0) ,  B(\mathscr E_{0,n,j,k})), \nonumber
\end{align*}
and also the Splitting Lemma of the twisted $G$-equivariant degree (cf. Appendix \ref{sec:appendix_twisted_comp_form})
\begin{align*}
%\label{eq:local_bif_inv_G_sum_part}
\gdeg \left( \tilde{\mathscr A}, \tilde{\mathscr O} \right) &= \gdeg \left( \bigoplus_{m=1}^\infty \bigoplus_{n=1}^\infty \bigoplus_{j=0}^r \bigoplus_{k=1}^{m_j} \tilde{\mathscr A}_{m,n,j,k}(\lambda_0), \bigoplus_{m=1}^\infty \bigoplus_{n=1}^\infty \bigoplus_{j=0}^r \bigoplus_{k=1}^{m_j} \tilde{\mathscr O}_{m,n,j,k}  \right) \\
&= \sum_{m=1}^{\infty} \sum_{n = 1}^\infty \sum_{j=0}^r \sum_{k=1}^{m_j} \gdeg(\tilde{\mathscr A}_{m,n,j,k}(\lambda_0) ,  \tilde{\mathscr O}_{m,n,j,k}), 
\end{align*}
where we have made use of the notation
\begin{align*}
   \begin{cases}
        \theta_{m,n,j,k}(\lambda,u) := \theta|_{\bc \times \mathscr E_{m,n,j,k}}(\lambda,u); \\
        \tilde{\mathscr A}_{m,n,j,k}(\lambda)u := (\theta_{m,n,j,k}(\lambda,u),\mathscr A_{m,n,j,k}(\lambda)u); \\
       \tilde{\mathscr O}_{m,n,j,k} :=\mathscr O \cap \bc \times \mathscr E_{m,n,j,k},
    \end{cases}
\end{align*}
one obtains
\begin{align}
\label{eq:local_bif_inv_reformulation3}
    \omega_G(\lambda_0) 
    = \prod_{n = 1}^\infty \prod_{j=0}^r \prod_{k=1}^{m_j} & \Gammadeg(\mathscr A_{0,n,j,k}(\lambda_0) ,  B(\mathscr E_{0,n,j,k})) \\ & \cdot
    \sum_{m=1}^{\infty} \sum_{n = 1}^\infty \sum_{j=0}^r \sum_{k=1}^{m_j} \gdeg(\tilde{\mathscr A}_{m,n,j,k}(\lambda_0) ,  \tilde{\mathscr O}_{m,n,j,k}). \nonumber
\end{align}
\begin{lemma} \label{lemm:index_sets} \rm
    Under the assumptions \ref{a0}--\ref{a4} and for an isolated critical point $(\lambda_0,0) = (\alpha_0,\beta_0,0)$ satisfying assumption
    \ref{b1}, the formulation \eqref{eq:local_bif_inv_reformulation3} is well-defined. 
\end{lemma}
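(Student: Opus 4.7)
The plan is to show that both the infinite $\bz_2 \times \bz_2 \times \Gamma$-equivariant product and the infinite twisted $G$-equivariant sum in \eqref{eq:local_bif_inv_reformulation3} collapse to finite expressions in their respective Burnside modules, thereby determining a well-defined element of $A_1^t(G)$. I would treat each factor separately, in each case identifying the finite index set of non-trivial contributions.

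For the product factor (the $m=0$ contribution), observe that by \eqref{def:eigenvalues_L} and \eqref{def:eigenvalues_Anmjk} one has $\xi_{0,n} = n^2 + 1$, whence
\begin{align*}
\mu_{0,n,j,k}(\alpha_0,\beta_0) = \frac{n^2 + \zeta_{j,k}(\alpha_0) + \beta_0}{n^2 + 1} \longrightarrow 1 \qquad \text{as } n \to \infty.
\end{align*}
Consequently, for all but finitely many $n$, the scalar $\mu_{0,n,j,k}(\lambda_0)$ is positive, so $\mathscr A_{0,n,j,k}(\lambda_0)$ is a positive multiple of $\id_{\mathcal V_j^{[2 \nmid n]}}$. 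By the normalization and homotopy properties of the $\bz_2 \times \bz_2 \times \Gamma$-equivariant Brouwer degree, the corresponding factor equals the multiplicative unit of the Burnside ring $A(\bz_2 \times \bz_2 \times \Gamma)$. Assumption \ref{b1} guarantees each of the finitely many remaining factors is itself well-defined, so the infinite product truncates.

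For the sum (the $m \geq 1$ contributions), Remark \ref{rm:critical_values} pins down the vanishing condition $\mu_{m,n,j,k}(\lambda_0) = 0$ to $\beta_0 = \delta m/\sin(m\tau)$ together with $n^2 = \nu^2 m^2 - \delta m \cot(m\tau) - \zeta_{j,k}(\alpha_0)$. Since $\tau \notin \pi\bq$ forces $\sin(m\tau) \neq 0$ for all $m \in \bn$, the bound $|\sin(m\tau)| \leq 1$ implies $m \leq |\beta_0|/\delta$, restricting $m$ to a finite set; for each such $m$ and each $(j,k)$ the second equation pins down $n^2$ uniquely. Therefore only finitely many quadruples $(m,n,j,k)$ with $m \geq 1$ yield $\mu_{m,n,j,k}(\lambda_0) = 0$.

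The main obstacle is showing that each of the remaining indices contributes zero to the sum. For such $(m,n,j,k)$, the operator $\mathscr A_{m,n,j,k}(\lambda_0) = \mu_{m,n,j,k}(\lambda_0)\id$ is an isomorphism, and a straight-line homotopy through non-vanishing scalars deforms the complemented operator $\tilde{\mathscr A}_{m,n,j,k}$ to one whose zero set inside $\tilde{\mathscr O}_{m,n,j,k}$ lies entirely on the circle $\{(\lambda,0) : |\lambda - \lambda_0| = \varepsilon/2\}$, consisting of points with full $G$-isotropy $(G_{(\lambda,0)}) = (G)$. Since $\gdeg$ takes values in $A_1^t(G) = \bz[\Phi_1^t(G)]$, a module indexed only by orbit types with a \emph{twisted} one-dimensional Weyl group, the orbit type $(G)$ is excluded from $\Phi_1^t(G)$, and by the additivity and excision properties of the twisted equivariant degree these trivial-isotropy zeros contribute nothing. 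Combining the two reductions yields the claimed well-definedness.
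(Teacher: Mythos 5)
Your conclusions are correct, but your route differs from the paper's, and one of your justifications quietly invokes a hypothesis that is not available at this stage of the argument.

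The paper's proof is shorter and more uniform: it observes that $\id - \mathscr A(\alpha,\beta)$ is compact (Remark \ref{rm:completely_continuous_field}), so the eigenvalues of $\mathscr A(\lambda_0)$ can accumulate only at $1$; this single observation gives both finiteness statements at once (finitely many non-positive $\mu_{0,n,j,k}(\lambda_0)$, finitely many vanishing $\mu_{m,n,j,k}(\lambda_0)$). You instead prove each finiteness statement by hand: for $m=0$ via the explicit limit $\mu_{0,n,j,k}(\lambda_0)\to 1$; for $m\geq 1$ via a bound on $m$ extracted from the imaginary part, after which $n^2$ is pinned down by the real part. Both routes are sound; yours is concrete and makes visible exactly which indices survive, while the paper's is a one-line abstract argument that does not require examining the eigenvalue formula.

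The one real flaw is your appeal to $\tau\notin\pi\bq$, which is condition \ref{d2} from Section \ref{sec:n_strings} and is \emph{not} among the hypotheses \ref{a0}--\ref{a4}, \ref{b1} of this lemma. Fortunately you do not need it. If $\mu_{m,n,j,k}(\lambda_0)=0$ with $m\geq 1$, the imaginary part of the numerator in \eqref{def:eigenvalues_Anmjk} gives $\beta_0\sin(m\tau)=\delta m>0$, so $\sin(m\tau)\neq 0$ automatically on any critical index, and $|\beta_0|\geq\delta m$ follows from $|\sin(m\tau)|\leq 1$, hence $m\leq|\beta_0|/\delta$ as you want. Replace the citation of \ref{d2} by this observation and the gap closes.

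A smaller stylistic point: your claim that the remaining summands contribute the additive identity ``because $(G)$ is excluded from $\Phi_1^t(G)$'' is informal and leans on a reading of the somewhat ambiguous definition of $\Phi_1^t(G)$ in Appendix \ref{sec:appendix_twisted_deg}. The cleaner mechanism is that when $\mathscr A_{m,n,j,k}(\lambda_0)$ is an isomorphism, the zeros of $\tilde{\mathscr A}_{m,n,j,k}$ in $\tilde{\mathscr O}_{m,n,j,k}$ all lie on $\{u=0\}$ and hence have $S^1$-isotropy equal to $S^1$; by the normalization axiom of the $S^1$-degree in Appendix \ref{sec:appendix_S1} such zeros contribute $0$ to $\deg_{S^1}(f^H,\mathscr O^H)$ for every twisted orbit type $(H)$, and then every coefficient $n_H$ vanishes by the recurrence formula \eqref{def:twisted_degree_recurrence}.
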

\begin{proof}
Let's prove that the expression \eqref{eq:local_bif_inv_reformulation3} involves only a finite number of non-trivial terms. Indeed, since $\id - \mathscr A(\alpha,\beta): \mathscr H \rightarrow \mathscr H$ is compact for all $(\alpha,\beta) \in \br \times \br$ (cf. Remark \ref{rm:completely_continuous_field}), the eigenvalues $\mu_{0,n,j,k}(\alpha_0,\beta_0)$ are positive (with only finitely many possible exceptions). Therefore, one has
    \[
\Gammadeg(\mathscr A_{0,n,j,k}(\alpha_0,\beta_0), B(\mathscr E_{0,n,j,k})) = (\bz_2 \times \bz_2 \times \Gamma) \in A(\bz_2 \times \bz_2 \times \Gamma),
\]
for almost every triple $(n,j,k)$ with $n \in \bn$, $j \in \{0,1,\ldots,r\}$ and $k \in \{1,2,\ldots, m_j\}$, since the $\bz_2 \times \bz_2 \times \Gamma$-equivariant degree of each $\bz_2 \times \bz_2 \times \Gamma$-equivariant linear isomorphism $\mathscr A_{0,n,j,k}(\alpha_0,\beta_0)$ on the unit ball $B(\mathscr E_{0,n,j,k})$ is fully specified by its real spectra $\sigma(\mathscr A_{0,n,j,k}(\alpha_0,\beta_0))$ and the irreducible $\bz_2 \times \bz_2 \times \Gamma$-representation $\mathcal V_j^{[2 \nmid n]}$ according to the formula
\begin{align*} %\label{eq:Gamma_deg_A_0njk}
\Gammadeg(\mathscr A_{0,n,j,k}(\alpha_0,\beta_0), B(\mathscr E_{0,n,j,k})) =    \begin{cases}
        \deg_{\mathcal V_{j}^{[2 \nmid n]}} \quad & \text{ if } \mu_{0,n,j,k}(\alpha_0,\beta_0) < 0; \\
        (\bz_2 \times \bz_2 \times \Gamma) & \text{ otherwise.}
    \end{cases}
\end{align*}
Similarly, with only finitely many possible exceptions, the matrices $\mathscr A_{m,n,j,k}(\alpha,\beta)$ are invertible. Therefore, one has
\[
\gdeg( \tilde{\mathscr A}_{m,n,j,k}(\lambda_0), \tilde{\mathscr O}_{m,n,j,k}) = (G) \in A_t^1(G),
\]
for almost every quadruple $(m,n,j,k)$ with $(m,n) \in \bn \times \bn$, $j \in \{0,1,\ldots,r\}$ and $k \in \{1,2,\ldots, m_j\}$, since the $G$-equivariant twisted degree of each complemented operator $(\theta_{m,n,j,k}, \mathscr A_{m,n,j,k})$ on its corresponding isolating neighborhood $\tilde{\mathscr O}_{m,n,j,k}$ is fully specified by the spectrum of $\mathscr A_{m,n,j,k}$ and the irreducible $G$-representation $\mathcal V_{m,j}^{[2 \nmid n]}$ according to the formula
\begin{align*} %\label{eq:twisted_deg_A_mnjk}
\gdeg( \tilde{\mathscr A}_{m,n,j,k}(\alpha_0,\beta_0), \tilde{\mathscr O}_{m,n,j,k}) = 
\begin{cases}
    \rho_{m,n,j,k}(\alpha_0,\beta_0)  \deg_{\mathcal V_{m,j}^{[2 \nmid n]}} \quad & \text{ if } \mu_{m,n,j,k}(\alpha_0,\beta_0) = 0; \\
        (G) & \text{ otherwise,}
    \end{cases}
\end{align*}    
where $\rho_{m,n,j,k}(\alpha,\beta) := \deg( \det\nolimits_\bc \mathscr A_{m,n,j,k}(\alpha,\beta))$.
\end{proof}
\vs
With motivation from Lemma \ref{lemm:index_sets} and as a matter of convenience, we introduce some notation to keep track of the indices
\begin{align} \label{def:index_set_full}
    \Sigma := \{ (m,n,j,k) : m \in \bn \cup \{0\}, \; n \in \bn, \; j \in \{0,1,\ldots,r\}, \; k \in \{1,2,\ldots,m_j\} \},
\end{align}
which non-trivially contribute to the $A_1^t(G)$-module product \eqref{eq:local_bif_inv_reformulation3}. Specifically, we define the index sets 
\begin{align} \label{def:index_set_null}
    \Sigma_0(\alpha,\beta) := \{ (m,n,j,k) \in \Sigma :  m > 0, \; \mu_{m,n,j,k}(\alpha,\beta) = 0\},
\end{align}
and
\begin{align} \label{def:index_set_negative}
    \Sigma_-(\alpha,\beta) := \{ (n,j,k): (0,n,j,k) \in \Sigma \text{ and }  \mu_{0,n,j,k}(\alpha,\beta) < 0 \},
\end{align}
to account for the {\it null} and {\it negative} spectra of $\mathscr A(\alpha,\beta): \mathscr H \rightarrow \mathscr H$, respectively. 
\begin{lemma} \rm \label{lemm:local_bif_inv_comp_formula}
    Under the assumptions \ref{a0}--\ref{a4} and using the notations \eqref{def:index_set_null}--\eqref{def:index_set_negative}, the local bifurcation invariant at an isolated critical point $(\lambda_0,0) = (\alpha_0,\beta_0,0) \in \Lambda$ satisfying assumption \ref{b1} becomes
\begin{align}\label{eq:local_bif_inv_comp_formula}
        \omega_G(\lambda_0) = \prod\limits_{(n,j,k) \in \Sigma_{-}(\alpha_0,\beta_0)} \deg_{\mathcal V_{j}^{[2 \nmid n]}} \cdot \sum\limits_{(m,n,j,k) \in \Sigma_0(\alpha_0,\beta_0)} \rho_{m,n,j,k}(\alpha_0,\beta_0) \deg_{\mathcal V_{m,j}^{[2 \nmid n]}},
    \end{align}
where each $\rho_{m,n,j,k}: \bc \rightarrow \bz$ is defined as the winding number of the corresponding complex eigenvalue $\mu_{m,n,j,k}: \bc \rightarrow \bc$, i.e.:
\begin{align}\label{def:local_bif_inv_coeff}
    \rho_{m,n,j,k}(\alpha,\beta) 
    :&= \deg( \det\nolimits_\bc \mathscr A_{m,n,j,k}(\alpha,\beta)) \\ &= \deg(\mu_{m,n,j,k}(\alpha,\beta), B_{\varepsilon}(\lambda)). \nonumber
\end{align}
\end{lemma}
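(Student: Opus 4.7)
The plan is to take formulation \eqref{eq:local_bif_inv_reformulation3} as the starting point and simplify each of the two constituent expressions by discarding all ``trivial'' terms, exploiting the explicit formulas for the basic degrees already recorded in the proof of Lemma~\ref{lemm:index_sets}.

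First I would address the product factor
\[
\prod_{n,j,k} \Gammadeg(\mathscr A_{0,n,j,k}(\lambda_0),B(\mathscr E_{0,n,j,k})).
\]
Condition~\ref{b1} guarantees that each factor is the $\bz_2\times\bz_2\times\Gamma$-equivariant degree of an isomorphism on $\mathscr E_{0,n,j,k}\simeq \mathcal V_j^{[2\nmid n]}$, so the formula displayed inside the proof of Lemma~\ref{lemm:index_sets} applies: the factor equals $\deg_{\mathcal V_j^{[2\nmid n]}}$ when $\mu_{0,n,j,k}(\lambda_0)<0$ and equals the identity element $(\bz_2\times\bz_2\times\Gamma)$ otherwise. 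By Lemma~\ref{lemm:index_sets}, only finitely many indices land in the first case and these are precisely the elements of $\Sigma_-(\alpha_0,\beta_0)$ as defined in \eqref{def:index_set_negative}. Dropping the identity factors reduces the product to
\[
\prod_{(n,j,k)\in\Sigma_-(\alpha_0,\beta_0)} \deg_{\mathcal V_j^{[2\nmid n]}}.
\]

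Next I would treat the sum
\[
\sum_{m,n,j,k} \gdeg(\tilde{\mathscr A}_{m,n,j,k}(\lambda_0),\tilde{\mathscr O}_{m,n,j,k}).
\]
For each quadruple with $m>0$, the formula from Lemma~\ref{lemm:index_sets} gives $(G)\in A_1^t(G)$ when $\mu_{m,n,j,k}(\lambda_0)\neq 0$ and $\rho_{m,n,j,k}(\alpha_0,\beta_0)\deg_{\mathcal V_{m,j}^{[2\nmid n]}}$ when $\mu_{m,n,j,k}(\lambda_0)=0$. The element $(G)$ is the neutral element for addition in $A_1^t(G)$ only in the sense that we ought to interpret the ``trivial'' contributions correctly: here the complemented operator $\tilde{\mathscr A}_{m,n,j,k}$, being an isomorphism paired with the auxiliary function, produces a trivial twisted degree (i.e.\ $0\in A_1^t(G)$) away from the critical locus, by the homotopy argument linearizing to an invertible complemented field. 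I would make this point carefully by noting that, for $\mu_{m,n,j,k}(\lambda_0)\neq 0$, the pair $(\tilde{\mathscr A}_{m,n,j,k},\tilde{\mathscr O}_{m,n,j,k})$ is $G$-homotopic inside the admissible class to the trivial complemented field whose twisted degree vanishes --- hence the term contributes $0$. Consequently, only the finitely many indices in $\Sigma_0(\alpha_0,\beta_0)$, defined in \eqref{def:index_set_null}, produce non-zero summands, each equal to $\rho_{m,n,j,k}(\alpha_0,\beta_0)\deg_{\mathcal V_{m,j}^{[2\nmid n]}}$, where the identification of the coefficient with the winding number stated in \eqref{def:local_bif_inv_coeff} is standard for complemented linear $G$-isomorphisms on a single irreducible $G$-isotypic piece of type $\mathcal V_{m,j}^{[2\nmid n]}$ (cf.\ Appendix~\ref{sec:appendix_twisted_comp_form}).

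Combining the two reductions via the $A(\bz_2\times\bz_2\times\Gamma)$-module structure on $A_1^t(G)$ established just above \eqref{eq:local_bif_inv_reformulation3} yields \eqref{eq:local_bif_inv_comp_formula}. The main obstacle I anticipate is the careful book-keeping for the ``trivial'' twisted-degree terms: one must justify that an invertible complemented linear field on $\tilde{\mathscr O}_{m,n,j,k}$ contributes the zero element of $A_1^t(G)$ (rather than something non-trivial), and separately that the $(\bz_2\times\bz_2\times\Gamma)$ factors in the product can be dropped without affecting the module action on the surviving non-trivial summands. Both facts follow from the multiplicativity and normalization axioms of the relevant equivariant degrees, but they need to be invoked explicitly to close the argument.
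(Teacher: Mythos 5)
Your proof is correct and follows the route the paper implicitly takes: the paper states Lemma~\ref{lemm:local_bif_inv_comp_formula} without a separate proof, expecting the reader to substitute the case analysis from Lemma~\ref{lemm:index_sets} directly into the reformulation~\eqref{eq:local_bif_inv_reformulation3} and drop the identity/trivial factors. You reconstruct exactly that chain of reasoning, with one useful refinement: you correctly flag that the paper's claim in Lemma~\ref{lemm:index_sets} that the invertible summands give $(G)\in A_1^t(G)$ must, for the infinite sum to make sense, actually be read as the zero element of $A_1^t(G)$. Your justification for that vanishing (a $G$-homotopy to a ``trivial complemented field'') is a bit indirect; a cleaner argument is that when $\mathscr A_{m,n,j,k}(\lambda)$ is invertible the only zeros of the complemented operator in $\tilde{\mathscr O}_{m,n,j,k}$ are trivial points $(\lambda,0)$ with isotropy $G$, so there are no nontrivial $S^1$-orbits in the solution set, every cumulative $S^1$-degree $\deg_{S^1}(f^H,\Om^H)$ appearing in the recurrence formula~\eqref{def:twisted_degree_recurrence} vanishes, and hence all coefficients $n_H$ are zero. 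Either way, the final formula and the reduction to the finite index sets $\Sigma_-(\alpha_0,\beta_0)$ and $\Sigma_0(\alpha_0,\beta_0)$ are as intended.
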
 
Lemmas \eqref{lemm:distjoint_maxorbtyp_sets} and \eqref{lemm:basicdegree_maxorbtyps_coeff} suggest a natural refinement of our index set \eqref{def:index_set_null}. Specifically, for a given parameter pair $(\alpha,\beta) \in \br \times \br$ and positive folding $s > 0$, we put
\begin{align} \label{def:index_set_m_folding_local}
\Sigma_s(\alpha,\beta,H) := \{ (n,j,k): (s,n,j,k) \in \Sigma_0(\alpha,\beta) \}. 
\end{align}
Clearly, a necessary condition for an orbit type of maximal kind $(H) \in \mathfrak M_s$ to satisfy the condition \eqref{eq:non_triviality_mth_folding_coeff} for a given quadruple of indices $(m,n,j,k) \in \Sigma_0(\alpha,\beta)$ is the inclusion $(n,j,k) \in \Sigma_s(\alpha,\beta)$.
\vs
We are now in a position to formulate our main local equivariant bifurcation result.
\begin{theorem} \rm \label{thm:main_local_bif}
    Under the conditions \ref{a0}--\ref{a4}, with an isolated critical point $(\lambda_0,0) = (\alpha_0,\beta_0,0) \in \Lambda$ satisfying \ref{b1} and using the notation \eqref{def:index_set_m_folding_local}, if there is a folding $s \in \bn$ and an orbit type of maximal kind $(H) \in \mathfrak M_1$ satisfying 
\begin{align*} 
    \sum_{(n,j,k) \in \Sigma_s(\alpha_0,\beta_0) } \rho_{s,n,j,k}(\alpha_0,\beta_0) \operatorname{coeff}^{{}^{s}H}(\deg_{\mathcal V_{s,j}^{[2 \nmid n]}})  \neq 0,
\end{align*}
then there exists a branch $\mathcal C$ of non-trivial solutions to problem \eqref{eq:operator_equation} bifurcating from $(\alpha_0,\beta_0,0)$ with symmetries at least $({}^{s}H)$.
\end{theorem}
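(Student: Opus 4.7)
The plan is to reduce the claim to the abstract bifurcation result Theorem \ref{thm:abstract_local_bif} and then show that the hypothesis of Theorem \ref{thm:main_local_bif} forces the coefficient $\operatorname{coeff}^{({}^s H)}(\omega_G(\lambda_0)) \in \bz$ to be nonzero. Since $(\alpha_0,\beta_0,0)$ is isolated and satisfies \ref{b1}, Lemma \ref{lemm:local_bif_inv_comp_formula} gives the closed form
\[
\omega_G(\lambda_0) = \Bigl(\prod_{(n',j',k') \in \Sigma_-(\alpha_0,\beta_0)} \deg_{\mathcal V_{j'}^{[2\nmid n']}}\Bigr) \cdot \sum_{(m,n,j,k) \in \Sigma_0(\alpha_0,\beta_0)} \rho_{m,n,j,k}(\alpha_0,\beta_0)\, \deg_{\mathcal V_{m,j}^{[2\nmid n]}},
\]
an element of $A_1^t(G)$ viewed as an $A(\bz_2\times\bz_2\times\Gamma)$-module. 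So it suffices to extract the coefficient at $({}^s H)$ from this product.

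Next I would split the sum according to the folding index $m$. Because $({}^s H)=\psi_s^{-1}(H)$ lies in $\mathfrak M_s$, Lemma \ref{lemm:distjoint_maxorbtyp_sets} ensures that $({}^s H)\notin \mathfrak M_m$ for $m\neq s$; moreover the $S^1$-component of $({}^s H)$ is tied to the $s$-folding $\psi_s$, so $({}^s H)$ cannot appear at all as an isotropy type in the irreducible $G$-representation $\mathcal V_{m,j}^{[2\nmid n]}$ when $m\neq s$. Consequently every summand with $m\neq s$ has $\operatorname{coeff}^{({}^s H)}$ equal to zero, even after multiplication by the $\bz_2\times\bz_2\times\Gamma$-degree product on the left (that multiplication can only redistribute mass among orbit types whose $S^1$-part is compatible with the original $m$-folding). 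For the remaining summands with $m=s$, the multiplicativity of the module action at a maximal orbit type yields
\[
\operatorname{coeff}^{({}^s H)}\!\Bigl[\Bigl(\prod_{(n',j',k') \in \Sigma_-(\alpha_0,\beta_0)} \deg_{\mathcal V_{j'}^{[2\nmid n']}}\Bigr)\cdot \deg_{\mathcal V_{s,j}^{[2\nmid n]}}\Bigr] = C\cdot \operatorname{coeff}^{({}^s H)}\!\bigl(\deg_{\mathcal V_{s,j}^{[2\nmid n]}}\bigr),
\]
where $C$ is the integer obtained by evaluating the basic $\bz_2\times\bz_2\times\Gamma$-degrees at the projection of $({}^s H)$ onto $\bz_2\times\bz_2\times\Gamma$; Lemma \ref{lemm:basicdegree_maxorbtyps_coeff} applied to the maximal projection ensures $C>0$, and in fact $C$ is independent of $(n,j,k)$.

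Combining the two observations, one obtains
\[
\operatorname{coeff}^{({}^s H)}(\omega_G(\lambda_0)) = C \sum_{(n,j,k)\in \Sigma_s(\alpha_0,\beta_0)} \rho_{s,n,j,k}(\alpha_0,\beta_0)\,\operatorname{coeff}^{({}^s H)}\!\bigl(\deg_{\mathcal V_{s,j}^{[2\nmid n]}}\bigr),
\]
which is nonzero by the hypothesis of the theorem. Theorem \ref{thm:abstract_local_bif} then yields a branch $\mathcal C$ of non-trivial solutions to \eqref{eq:operator_equation} bifurcating from $(\alpha_0,\beta_0,0)$ with symmetries at least $({}^s H)$, completing the argument.

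The main obstacle is the second step: justifying that the module product by the $\bz_2\times\bz_2\times\Gamma$-degree factors acts as multiplication by a single positive constant $C$ on the $({}^s H)$-coefficient of each summand $\deg_{\mathcal V_{s,j}^{[2\nmid n]}}$, uniformly in $(n,j,k)\in \Sigma_s(\alpha_0,\beta_0)$. This rests on the recurrence formula for the twisted equivariant degree at maximal orbit types (cf.\ Lemma \ref{lemm:basicdegree_maxorbtyps_coeff}) together with a careful identification of how orbit types of maximal kind in $\Phi_1^t(G)$ project onto orbit types in $\Phi_0(\bz_2\times\bz_2\times\Gamma)$, and the verification that no cancellation can occur between distinct index triples sharing the same projected isotropy. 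Once that bookkeeping is settled, the remainder of the argument is a routine application of the existence property of $\gdeg$.
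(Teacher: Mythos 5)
Your overall strategy matches the paper's: reduce to Theorem \ref{thm:abstract_local_bif}, invoke Lemma \ref{lemm:local_bif_inv_comp_formula} to get the closed form of $\omega_G(\lambda_0)$, note that only the $m=s$ summands can contribute to $\operatorname{coeff}^{{}^{s}H}$, and then show that the leading $A(\bz_2\times\bz_2\times\Gamma)$-module factor preserves this coefficient. However, the step you yourself flag as the main obstacle is where your explanation goes wrong, and the justification you sketch would not close the gap.

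You claim that the module product $\bigl(\prod_{\Sigma_-}\deg_{\mathcal V_{j'}^{[2\nmid n']}}\bigr)\cdot\deg_{\mathcal V_{s,j}^{[2\nmid n]}}$ scales the $({}^sH)$-coefficient by a constant $C$ equal to ``the integer obtained by evaluating the basic $\bz_2\times\bz_2\times\Gamma$-degrees at the projection of $({}^sH)$,'' and invoke Lemma \ref{lemm:basicdegree_maxorbtyps_coeff} for $C>0$. Neither is correct: Lemma \ref{lemm:basicdegree_maxorbtyps_coeff} asserts positivity of $\operatorname{coeff}^H(\deg_{\mathcal V_{m,j}^{[2\nmid n]}})$ for $(H)$ of maximal kind --- a statement about coefficients in twisted basic $G$-degrees, not about how the $A(\bz_2\times\bz_2\times\Gamma)$-module action acts on them --- and the module action is not ``evaluation at a projection.'' The paper's actual mechanism is sharper and yields $C=1$: each $\bz_2\times\bz_2\times\Gamma$-basic degree has the form $(\bz_2\times\bz_2\times\Gamma)-\bm\alpha_{n,j}$ with $\operatorname{coeff}^{\bz_2\times\bz_2\times\Gamma}(\bm\alpha_{n,j})=0$, while the recurrence formula \eqref{def:recurrence_formula_module_product} for a maximal twisted orbit type gives $\operatorname{coeff}^{{}^sH}\bigl((K)\cdot({}^mH)\bigr)=1$ precisely when $(K)=(\bz_2\times\bz_2\times\Gamma)$ and $m=s$, and $0$ otherwise. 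Consequently multiplication by $\deg_{\mathcal V_{j'}^{[2\nmid n']}}$ leaves $\operatorname{coeff}^{{}^sH}$ of any twisted basic degree unchanged, and iterating over the finite product over $\Sigma_-(\alpha_0,\beta_0)$ gives $\operatorname{coeff}^{{}^{s}H}(\omega_G(\lambda_0))=\sum_{(n,j,k)\in\Sigma_s}\rho_{s,n,j,k}\operatorname{coeff}^{{}^sH}(\deg_{\mathcal V_{s,j}^{[2\nmid n]}})$. This identity--with $C=1$, not merely $C>0$--is the missing lemma you would need, and it comes from the maximality of $({}^sH)$ in the recurrence formula, not from Lemma \ref{lemm:basicdegree_maxorbtyps_coeff}.
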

\begin{proof}
The recurrence formula  for the $A(\bz_2 \times \bz_2 \times \Gamma)$-module product \eqref{def:recurrence_formula_module_product} implies 
\begin{align*}
\operatorname{coeff}^{{}^{s}H}((K) \cdot ({}^{m}H)) = 
\begin{cases}
      1 & \text{ if } (K) = (\bz_2 \times \bz_2 \times \Gamma) \text{ and } m = s; \\
      0 \quad & \text{ otherwise,}
\end{cases}
\end{align*}
for any orbit type of maximal kind $(H) \in \mathfrak M_1$ and $s \in \bn$. Moreover, since every basic degree $\deg_{\mathcal V_j^{[2 \nmid n]}} \in A(\bz_2 \times \bz_2 \times \Gamma)$ is of the form
\[
(\bz_2 \times \bz_2 \times \Gamma) - \bm \alpha_{n,j},
\]
for some $\bm \alpha_{n,j} \in A(\bz_2 \times \bz_2 \times \Gamma)$ with $\operatorname{coeff}^{\bz_2 \times \bz_2 \times \Gamma}(\bm \alpha_{n,j}) = 0$, one has
\[
\operatorname{coeff}^{{}^{s}H}(\deg_{\mathcal V_j^{[2 \nmid n]}} \cdot \deg_{\mathcal V_{m',j'}^{[2 \nmid n']}}) = \operatorname{coeff}^{{}^{s}H}(\deg_{\mathcal V_{m',j'}^{[2 \nmid n']}}), \quad m'>0, \; n,n' \in \bn, \; j,j' \in \{0,1,\ldots, r\},
\]
such that
\[
\operatorname{coeff}^{{}^{s}H}\left(\prod\limits_{(n,j,k) \in \Sigma_{-}(\alpha_0,\beta_0)} \deg_{\mathcal V_{j}^{[2 \nmid n]}} \cdot \deg_{\mathcal V_{m',j'}^{[2 \nmid n']}} \right) = \operatorname{coeff}^{{}^{s}H}(\deg_{\mathcal V_{m',j'}^{[2 \nmid n']}}).
\]
Therefore, the coefficient of $({}^{s}H)$ in the local bifurcation invariant is given by
\begin{align*} %\label{eq:thm_lbi_coeffH}
\operatorname{coeff}^{{}^{s}H}(\omega_G(\lambda_0)) &= \operatorname{coeff}^{{}^{s}H}\left( \prod\limits_{(n,j,k) \in \Sigma_{-}(\alpha_0,\beta_0)} \deg_{\mathcal V_{j}^{[2 \nmid n]}} \cdot \sum\limits_{(m,n,j,k) \in \Sigma_0(\alpha_0,\beta_0)} \rho_{m,n,j,k}(\alpha_0,\beta_0) \deg_{\mathcal V_{m,j}^{[2 \nmid n]}} \right) \nonumber \\ 
    &= \operatorname{coeff}^{{}^{s}H}\left( \sum\limits_{(m,n,j,k) \in \Sigma_0(\alpha_0,\beta_0)} \rho_{m,n,j,k}(\alpha_0,\beta_0) \deg_{\mathcal V_{m,j}^{[2 \nmid n]}} \right) \nonumber \\
    &= \sum\limits_{(m,n,j,k) \in \Sigma_0(\alpha_0,\beta_0)} \rho_{m,n,j,k}(\alpha_0,\beta_0) \operatorname{coeff}^{{}^{s}H}( 
 \deg_{\mathcal V_{m,j}^{[2 \nmid n]}} ) \nonumber \\
&= \sum\limits_{(n,j,k) \in \Sigma_s(\alpha_0,\beta_0)} \rho_{s,n,j,k}(\alpha_0,\beta_0) \operatorname{coeff}^{{}^{s}H}(\deg_{\mathcal V_{s,j}^{[2 \nmid n]}}),
\end{align*}
and the conclusion follows from Theorem \ref{thm:abstract_local_bif}.
\end{proof}
\vs
\begin{remark} \label{rm:crossing_sign} \rm
Let $(\alpha_0,\beta_0,0) \in \Lambda$ be an isolated critical point and recall that every quadruple $(m,n,j,k) \in \Sigma_0(\alpha_0,\beta_0)$ is associated with an eigenvalue $\mu_{m,n,j,k}: \bc \rightarrow \bc$ satisfying 
\[
\mu_{m,n,j,k}(\alpha_0,\beta_0) = 0.
\]
Given a positive folding $s > 0$, if the set of eigenvalues $\mu_{s,n,j,k}$ associated with triples $(n,j,k) \in \Sigma_s(\alpha_0,\beta_0)$ are all crossing the imaginary axis {\it in the same direction}, then it must be that the corresponding set of coefficients $\rho_{s,n,j,k}(\alpha_0,\beta_0) \in \bz$ all {\it share the same sign} (c.f. \eqref{def:local_bif_inv_coeff}).
\end{remark}
Remark \ref{rm:crossing_sign} provides insight into why Theorem \ref{thm:intro_arbitrary_Gamma} imposes no restrictions on the multiplicity of critical points and also suggests the following practical corollary:
\begin{corollary} \label{cor:main_local_bif}
Under the conditions \ref{a0}--\ref{a4} and with an isolated critical point $(\lambda_0,0) = (\alpha_0,\beta_0,0)$ satisfying condition \ref{b1}, if there is a positive number $s > 0$ with
    \[
    \rho_{s,n,j,k}(\alpha_0,\beta_0) \rho_{s,n',j',k'}(\alpha_0,\beta_0) \geq 0, 
    \]
for every pair of triples $(n,j,k),(n',j',k') \in \Sigma_s(\alpha_0,\beta_0)$ and an orbit type of maximal kind $(H) \in \mathfrak M_1$ satisfying $(H) \in \mathfrak M_{1,j}^{[2 \nmid n]}$ for at least one triple $(n,j,k) \in \Sigma_s(\alpha_0,\beta_0)$ with $\rho_{s,n,j,k}(\alpha_0,\beta_0) \neq 0$, then there exists a branch $\mathcal C$ of non-trivial solutions to problem \eqref{eq:operator_equation} bifurcating from $(\alpha_0,\beta_0,0)$ with symmetries at least $({}^{s}H)$.
\end{corollary}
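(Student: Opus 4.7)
The plan is to deduce Corollary \ref{cor:main_local_bif} directly from Theorem \ref{thm:main_local_bif}, applied to the chosen folding $s \in \bn$ and the orbit type $({}^{s}H) \in \mathfrak M_s$ obtained from $(H) \in \mathfrak M_1$ via the $s$-folding homomorphism. By that theorem, it suffices to verify that the scalar
\begin{align*}
\Xi := \sum_{(n,j,k) \in \Sigma_s(\alpha_0,\beta_0)} \rho_{s,n,j,k}(\alpha_0,\beta_0)\, \operatorname{coeff}^{{}^{s}H}\!\left(\deg_{\mathcal V_{s,j}^{[2\nmid n]}}\right)
\end{align*}
is non-zero. My strategy is to show that every summand of $\Xi$ carries a common sign (possibly zero) and that at least one summand is strictly non-zero, ruling out cancellation.

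For the sign analysis of the basic degree coefficients, I would argue as follows. Fix $(n,j,k) \in \Sigma_s(\alpha_0,\beta_0)$. If $({}^{s}H)$ does not appear in $\Phi_1^t(G; \mathcal V_{s,j}^{[2\nmid n]} \setminus \{0\})$, then $\operatorname{coeff}^{{}^{s}H}(\deg_{\mathcal V_{s,j}^{[2\nmid n]}}) = 0$ and the term vanishes. Otherwise, maximality of $({}^{s}H)$ in the ambient lattice $\Phi_1^t(G; \mathscr H_s \setminus \{0\})$ must descend to the sublattice: any $(K) > ({}^{s}H)$ either fails to be a twisted orbit type or satisfies $\mathscr H_s^K = \{0\}$, and since the isotypic component $\mathscr H_{s,j}^{[2\nmid n]} \subset \mathscr H_s$ is modeled on $\mathcal V_{s,j}^{[2\nmid n]}$, the $G$-isotypic decomposition \eqref{eq:G_isotypic_decomp} forces $(\mathcal V_{s,j}^{[2\nmid n]})^K = \{0\}$ in that case. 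Hence $({}^{s}H) \in \mathfrak M_{s,j}^{[2\nmid n]}$, and Lemma \ref{lemm:basicdegree_maxorbtyps_coeff} yields $\operatorname{coeff}^{{}^{s}H}(\deg_{\mathcal V_{s,j}^{[2\nmid n]}}) > 0$. Invoking $\Psi_s(\mathfrak M_{1,j}^{[2\nmid n]}) = \mathfrak M_{s,j}^{[2\nmid n]}$, this strict positivity is equivalent to $(H) \in \mathfrak M_{1,j}^{[2\nmid n]}$, and in every case the coefficient is non-negative.

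Combining this with the sign-consistency hypothesis $\rho_{s,n,j,k}(\alpha_0,\beta_0)\,\rho_{s,n',j',k'}(\alpha_0,\beta_0) \geq 0$, each summand of $\Xi$ is the product of a scalar of fixed sign with a non-negative coefficient, so all non-trivial contributions share a common sign and cannot cancel. The existence hypothesis provides a triple $(n,j,k) \in \Sigma_s(\alpha_0,\beta_0)$ with $\rho_{s,n,j,k}(\alpha_0,\beta_0) \neq 0$ and $(H) \in \mathfrak M_{1,j}^{[2\nmid n]}$, certifying that at least one summand is strictly non-zero; therefore $\Xi \neq 0$, and Theorem \ref{thm:main_local_bif} yields the claimed branch of non-trivial solutions with symmetries at least $({}^{s}H)$. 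I expect the most delicate step to be the descent of maximality from the ambient lattice to each sublattice $\Phi_1^t(G; \mathcal V_{s,j}^{[2\nmid n]} \setminus \{0\})$ in which $({}^{s}H)$ appears, since this depends on the compatibility of fixed-point subspaces with the $G$-isotypic decomposition rather than on a direct combinatorial comparison of lattices.
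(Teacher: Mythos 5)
Your proof is correct and follows the same route the paper takes: the paper's proof is the one-liner ``a direct consequence of Lemma \ref{lemm:basicdegree_maxorbtyps_coeff} and Theorem \ref{thm:main_local_bif},'' and your elaboration simply fills in the two facts being implicitly invoked (non-negativity of all the basic-degree coefficients $\operatorname{coeff}^{{}^{s}H}(\deg_{\mathcal V_{s,j}^{[2\nmid n]}})$, and strict positivity for at least one term). One small simplification: the ``descent of maximality'' you worry about is automatic from the definition $\mathfrak M_{s,j}^{[2\nmid n]} := \Phi_1^t(G;\mathscr H_{s,j}^{[2\nmid n]}\setminus\{0\}) \cap \mathfrak M_s$ --- if $({}^{s}H)\in\mathfrak M_s$ lies in the subset $\Phi_1^t(G;\mathscr H_{s,j}^{[2\nmid n]}\setminus\{0\})$ at all, it is already in the intersection, so no fixed-point-space argument is needed.
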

\begin{proof}
     A direct consequence of Lemma \ref{lemm:basicdegree_maxorbtyps_coeff} and Theorem \ref{thm:main_local_bif}.
\end{proof}

\subsection{The Rabinowitz Alternative} \label{sec:rab_alt}
In order to study the global behaviour of branches of non-trivial solutions predicted by Theorem \eqref{thm:main_local_bif} with a Rabinowitz-type argument, it will be necessary to adopt the following pair of assumptions:
\begin{enumerate} [label=($\tilde{B}_1$)] 
    \item\label{b1t} For all $n \in \bn$, $j \in \{0,1,\ldots,r\}$, $k \in \{1,2,\ldots,m_j\}$ and for every critical point $(\alpha_0,\beta_0,0) \in \Lambda$, one has
    \[
    \mu_{0,n,j,k}(\alpha_0,\beta_0) \neq 0.
    \]
\end{enumerate}
\begin{enumerate} [label=($B_2$)] 
    \item\label{b2} The critical set $\Lambda \subset M$ is nonempty and discrete.
\end{enumerate}
Under conditions \ref{b1t}--\ref{b2}, the local bifurcation invariant at every critical point for the operator equation \eqref{eq:operator_equation} is well-defined and can be calculated with the computational formula \eqref{eq:local_bif_inv_comp_formula}. 
\begin{remark} \rm
As will become clear in the subsequent section, the Condition \ref{b1t} can be avoided with an appropriate fixed-point reduction.
\end{remark}
Proof of the following global equivariant bifurcation result can be found in \cite{book-new}.
\begin{theorem} \rm \label{thm:rab_alt} {\bf (Rabinowitz' Alternative)}
Suppose that conditions \ref{a0}--\ref{a4}, \ref{b1t}--\ref{b2} are satisfied and let $\mathcal U \subset \br \times \br \times \mathscr H$ be any open bounded $G$-invariant with $\partial \mathcal U \cap \Lambda = \emptyset$. If $\mathcal C \subset \mathscr S$ is a branch of non-trivial solutions to \eqref{eq:operator_equation} bifurcating from a critical point $(\lambda_0,0) \in \mathcal U \cap \Lambda$, then one has the following alternative: 
\begin{enumerate}[label=$(\alph*)$]
\item\label{alt_a}  either $\mathcal C \cap \partial \mathcal U \neq \emptyset$;
    \item\label{alt_b} or there exists a finite set
    \begin{align*}
        \overline{\mathcal C} \cap \Lambda = \{ (\lambda_0,0),(\lambda_1,0), \ldots, (\lambda_{n_0},0) \},
    \end{align*}
    satisfying the following relation
    \begin{align*}
        \sum\limits_{i=1}^{n_0} \omega_{G}(\lambda_i) = 0.
    \end{align*}
\end{enumerate} 
\end{theorem}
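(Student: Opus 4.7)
\medskip
\noindent The plan is to argue by contrapositive: assume that alternative \ref{alt_a} fails so that $\overline{\mathcal{C}}\subset\mathcal{U}$, and deduce \ref{alt_b}. Because $\id-\mathscr{F}(\lambda,\cdot)$ is compact for each $\lambda$ (Remark \ref{rm:completely_continuous_field}) and $\mathcal{U}$ is bounded, a standard Leray--Schauder argument shows that $\overline{\mathcal{C}}$ is compact. Combined with the discreteness hypothesis \ref{b2}, the intersection $\overline{\mathcal{C}}\cap\Lambda$ is therefore finite, say $\{(\lambda_0,0),(\lambda_1,0),\ldots,(\lambda_{n_0},0)\}$.

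Next I would apply Whyburn's separation lemma to the compact metric space consisting of $\overline{\mathscr{S}\cap\mathcal{U}}$ together with the trivial solutions in $\overline{\mathcal{U}}$: since, by hypothesis, $\overline{\mathcal{C}}$ does not meet $\partial\mathcal{U}$, no connected component links $\overline{\mathcal{C}}$ to $\partial\mathcal{U}$ or to the remaining critical points of $\Lambda\cap\mathcal{U}$, so these sets can be separated by disjoint compacta. A $G$-invariant open thickening of the part containing $\overline{\mathcal{C}}$ yields a bounded open $G$-invariant set $\Omega\subset\mathcal{U}$ such that $\overline{\Omega}\cap\Lambda=\{(\lambda_i,0)\}_{i=0}^{n_0}$ and $\partial\Omega\cap\overline{\mathscr{S}}=\emptyset$. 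Around each critical point choose pairwise disjoint isolating cylinders $\mathcal{O}_i\subset\Omega$ as in Section \ref{sec:local-bif-inv}, and construct a single $G$-invariant auxiliary function $\theta:\overline{\Omega}\to\br$ that restricts on each $\mathcal{O}_i$ to the standard choice \eqref{def:auxiliary_function} and is strictly negative on all of $M\cap(\overline{\Omega}\setminus\bigcup_i\mathcal{O}_i)$. The complemented map $\mathscr{F}_\theta=(\theta,\mathscr{F})$ is then $\Omega$-admissible --- on $\partial\Omega$ it never vanishes because either $\theta<0$ (at trivial points) or $\mathscr{F}\neq 0$ (at non-trivial points, since $\partial\Omega\cap\overline{\mathscr{S}}=\emptyset$) --- so additivity and excision of the twisted $G$-equivariant degree give
\[
\gdeg(\mathscr{F}_\theta,\Omega)=\sum_{i=0}^{n_0}\gdeg(\mathscr{F}_\theta,\mathcal{O}_i)=\sum_{i=0}^{n_0}\omega_G(\lambda_i).
\]

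To conclude I would use the linear $G$-homotopy $\theta_t:=(1-t)\theta-t$ for $t\in[0,1]$: the two non-vanishing reasons on $\partial\Omega$ persist throughout (the values $\theta_t(\lambda,0)$ stay strictly negative as convex combinations of negatives, and $\mathscr{F}$ is unchanged), so the homotopy is $\Omega$-admissible; at $t=1$ the first coordinate is the constant $-1$, so the endpoint map has no zeros in $\overline{\Omega}$ and homotopy invariance forces $\gdeg(\mathscr{F}_\theta,\Omega)=0$, which is exactly \ref{alt_b}. The main obstacle is the construction carried out in the second paragraph: verifying Whyburn's separation lemma in the compact-perturbation-of-identity setting, arranging $\Omega$ so that $\partial\Omega$ simultaneously avoids $\overline{\mathscr{S}}$ and $\Lambda\setminus\overline{\mathcal{C}}$, and assembling a $G$-invariant $\theta$ that interpolates between the prescribed local behaviour near each $(\lambda_i,0)$ and the globally negative sign elsewhere are the technically delicate but standard equivariant refinements; the rest is a direct application of the additivity and homotopy invariance of $\gdeg$ recalled in Appendix \ref{sec:appendix_twisted_deg}.
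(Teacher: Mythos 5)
The paper does not give a proof of this theorem; it explicitly defers to \cite{book-new}. Your sketch follows the standard architecture used in that source: compactness of $\overline{\mathcal C}$ via the compact-perturbation-of-identity structure, finiteness of $\overline{\mathcal C}\cap\Lambda$ from \ref{b2}, Whyburn separation to isolate the branch inside a $G$-invariant open set $\Omega\subset\mathcal U$, decomposition of $\gdeg(\mathscr F_\theta,\Omega)$ by additivity over isolating cylinders, and the $\Omega$-admissible homotopy $\theta_t=(1-t)\theta-t$ to force the total degree to vanish. This is the correct route.

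The point that needs genuine tightening is the additivity step. Writing $\gdeg(\mathscr F_\theta,\Omega)=\sum_i\gdeg(\mathscr F_\theta,\mathcal O_i)$ requires $\mathscr F_\theta^{-1}(0)\cap\Omega\subset\bigcup_i\mathcal O_i$. Your construction of $\theta$ only controls its sign on the trivial part $M\cap(\overline\Omega\setminus\bigcup_i\mathcal O_i)$, where you force $\theta<0$; it does not control $\theta$ at non-trivial solutions $(\lambda,u)\in\mathscr S\cap(\Omega\setminus\bigcup_i\mathcal O_i)$ --- the branch $\mathcal C$ certainly extends into that region --- and any such point where $\theta$ happens to vanish is a zero of $\mathscr F_\theta$ outside the cylinders, breaking the decomposition. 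You must additionally arrange $\theta>0$ on that set, which is possible because it is compact and bounded away from $M$ (hence $\|u\|_{\mathscr H}\geq\epsilon_0>0$ there), while still restricting on each $\mathcal O_i$ to a valid auxiliary function. You flag the construction of $\theta$ and $\Omega$ as the delicate part; this sign condition on $\mathscr S$ away from the cylinders is precisely the missing condition to verify, and it is handled in \cite{book-new}.
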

\begin{remark} \rm
    If a branch of non-trivial solutions $\mathcal C \subset \mathscr S$ satisfies $\mathcal C \cap \partial \mathcal U \neq \emptyset$ for every open bounded $G$-invariant set $\mathcal U \subset \br \times \br \times \mathscr H$ with $\partial \mathcal U \cap \Lambda = \emptyset$, then  $\mathcal C$ must be {\it unbounded}. Therefore, a sufficient condition for the unboundedness of a branch $\mathcal C \subset \mathscr S$ is the following:
        \begin{align*}
        \sum\limits_{(\lambda_k,0) \in\overline{\mathcal C} \cap \Lambda} \omega_{G}(\lambda_k) \neq 0.
    \end{align*}
\end{remark}

\subsection{Resolution of the Rabinowitz Alternative and the $\mathbf{H}$-Fixed Point Setting} \label{sec:res_rab_alt}
With an appropriate {\it fixed point reduction} for the problem \eqref{eq:operator_equation}, we can guarantee that a branch $\mathcal C$ of non-trivial solutions to \eqref{eq:system}, whose existence has been established by Theorem \ref{thm:main_local_bif}, is comprised only of {\it non-stationary solutions}.
\vs
Consider the subgroup
\[
\bm H := \{(1,1,1,e_\Gamma), (-1,-1,1,e_\Gamma) \} \leq S^1 \times \bz_2 \times \bz_2 \times \Gamma,
\]
and the corresponding $\bm H$-fixed point subspace $\mathscr H^{\bm H} := \{ u \in \mathscr H: h u = u \; \forall_{h \in \bm H} \}$. Adopting the notations
\begin{align} \label{def:H_fixed_operator_F}
    \mathscr F^{\bm H}: \br \times \br \times \mathscr H^{\bm H} \rightarrow \mathscr H^{\bm H}, \quad \mathscr F^{\bm H} := \mathscr F|_{\br \times \br \times \mathscr H^{\bm H}},
\end{align}
and
\begin{align} \label{def:H_fixed_operator_A}
\mathscr A^{\bm H}: \br \times \br \times \mathscr H^{\bm H} \rightarrow \mathscr H^{\bm H}, \quad \mathscr A^{\bm H} := \mathscr A|_{\br \times \br \times \mathscr H^{\bm H}},
\end{align}
notice that any solution $(\alpha,\beta,u) \in \br \times \br \times \mathscr H^{\bm H}$ to the operator equation
\begin{align} \label{eq:H_fixed_operator_equation}
    \mathscr F^{\bm H}(\alpha,\beta,u) = 0,
\end{align}
is also a solution to \eqref{eq:operator_equation}.  
\vs
In this $\bm H$-fixed point setting, we must adapt each of the notions introduced in Sections \ref{sec:computation_local_bif_inv} and \ref{sec:rab_alt} used to describe the Rabinowitz alternative for the equation \eqref{eq:operator_equation} to the {\it $\bm H$-fixed operator equation} \eqref{eq:H_fixed_operator_equation}. 
Notice first that any trivial solution to \eqref{eq:H_fixed_operator_equation} belonging to the
{\it $\bm H$-fixed critical set}
\begin{align} \label{def:H_fixed_critical_set}
    \Lambda^{\bm H} := \{ (\alpha,\beta,0) \in \br \times \br \times \mathscr H^{\bm H}: \mathscr A^{\bm H}(\alpha,\beta) \text{ is not an isomorphism} \},
\end{align}
is also a critical point of the equation \eqref{eq:operator_equation}.
Clearly, discreteness of the critical set $\Lambda$ implies discreteness of \eqref{def:H_fixed_critical_set}. We can modify the assumption \ref{b2} to ensure that $\Lambda^{\bm H}$ is also non-empty:
\begin{enumerate} [label=($\tilde{B}_2$)] 
    \item\label{b2t} the $\bm H$-fixed critical set $\Lambda^{\bm H} \subset M$ is nonempty and discrete.
\end{enumerate}
Notice also that the $\bm H$-fixed point space $\mathscr H^{\bm H}$ is an isometric Hilbert representation of the group $\bm G := W(\bm H) \simeq S^1 \times \bz_2 \times \Gamma$ and that, since the operators \eqref{def:H_fixed_operator_F}, \eqref{def:H_fixed_operator_A} are $\bm G$-equivariant, the equation \eqref{eq:H_fixed_operator_equation} can be understood as a $\bm G$-equivariant bifurcation problem. In particular, since the action of $(-1,-1) \in S^1 \times \bz_2$ on any $G$-isotypic component $\mathscr H_{m,j}^{[2 \nmid n]}$ (cf. \eqref{eq:G_isotypic_component}) is given by
\begin{align*}
    (-1,-1) v_n(x)(\cos(mt) a + \sin(mt) b) = -v_n(x)(\cos(m(t+\pi))a+ \sin(m(t+\pi))),
\end{align*}
it can easily be verified that $\mathscr H^{\bm H}$ has the $\bm G$-isotypic decomposition
\begin{align*} %\label{eq:bm_G_isotypic_decomposition}
\mathscr H^{\bm H} = \overline{\bigoplus_{j = 0}^r \bigoplus_{m \in 2 \bn - 1}^{\infty}} \mathscr H_{m,j}^{0} \oplus \mathscr H_{m,j}^{1}.
\end{align*}
Therefore, since $u \in \mathscr H$ is $S^1$-invariant (stationary) if and only if $u \in \mathscr H_{0,j}^{[2 \nmid n]}$ for some $n \in \bn$ and $j \in \{0,1,\ldots,r\}$, any solution to \eqref{eq:H_fixed_operator_equation} belonging to the set of {\it $\bm H$-fixed non-trivial solutions}
\begin{align*} %\label{def:H_fixed_nontrivial_solutions}
\mathscr S^{\bm H} := \{ (\alpha,\beta,u) \in \br \times \br \times \mathscr H^{\bm H}: \mathscr F^{\bm H}(\alpha,\beta,u) = 0 \text{ and } u \neq 0 \},
\end{align*}
must be non-stationary. Moreover, we are guaranteed that any connected component of non-trivial solutions $K \subset \mathscr S^{\bm H}$ to \eqref{eq:H_fixed_operator_equation} consists only of non-stationary solutions.
\begin{remark} \label{rm:non_stationary_branches} \rm
If $\mathcal C \subset \mathscr S$ is a branch of non-trivial solutions to \eqref{eq:operator_equation} bifurcating from an $\bm H$-fixed critical point $(\lambda_0,0) \in \Lambda^{\bm H}$, then there exists a non-empty connected component $K \subset \mathscr S^{\bm H}$ of non-stationary solutions with $\mathcal C^{\bm H} = \mathcal C \cap \mathscr S^{\bm H}$.
\end{remark}
As was the case with the spectrum of \eqref{def:operator_A}, we are able to describe the spectrum of $\mathscr A^{\bm H}(\alpha,\beta)$ for any parameter pair $(\alpha,\beta) \in \br \times \br$ in terms of the eigenvalues \eqref{def:eigenvalues_Anmjk} as follows 
\begin{align*} %\label{eq:H_fixed_spectrum_operator_A}
    \sigma(\mathscr A^{\bm H}(\alpha,\beta)) = \left\{ \bigcup_{m=1}^\infty \bigcup_{n=1}^\infty \bigcup_{j=0}^r \bigcup_{k = 1}^{m_j}
 \mu_{2m-1,n,j,k}(\alpha,\beta) \right \}.
\end{align*}
We refine the index set \eqref{def:index_set_null} to include only those indices $(m,n,j,k) \in \Sigma$ relevant to the $\bm H$-fixed point setting with the notation
\begin{align} \label{def:H_fixed_index_set}
\Sigma_0^{\bm H}(\alpha,\beta) := \{ (m,n,j,k) \in \Sigma_0(\alpha,\beta): 2 \nmid m \},
\end{align}
and, in order to distinguish between the local bifurcation invariant \eqref{def:local_bif_inv} and the {\it $\bm H$-fixed local bifurcation invariant} at a critical point $(\lambda_0,0) \in \Lambda^{\bm H}$, we can use the notation
\begin{align} \label{def:H_fixed_basic_degree}
\omega_G^{\bm H}(\lambda_0) := \bm G\text{-deg}(\mathscr F_{\theta}^{\bm H}, \mathscr O^{\bm H}),
\end{align}
where $\mathscr O^{\bm H} \subset \br \times \br \times \mathscr H^{\bm H}$ is the {\it $\bm H$-fixed isolating cylinder} at $(\lambda_0,0)$ (cf. \eqref{def:isolating_cylinder})
\begin{align*} %\label{def:H_fixed_isolating_cylinder}
 \mathscr O^{\bm H} := \{ (\lambda,u) \in \br \times \br \times \mathscr H^{\bm H} : | \lambda - \lambda_0 | < \varepsilon, \; \Vert u \Vert_{\mathscr H} < \upsilon \},   
\end{align*}
$\mathscr F_{\theta}^{\bm H}: \br \times \br \times \mathscr H^{\bm H}$ is the {\it $\bm H$-fixed complemented operator} (cf. \eqref{def:complemented_operator_F})
\begin{align*} %\label{def:H_fixed_complemented_operator_F}
    \mathscr F_{\theta}^{\bm H}(\lambda,u) := (\theta(\lambda,u), \mathscr F^{\bm H}(\lambda,u)),
\end{align*}
and $\theta: \br \times \br \times \mathscr H^{\bm H} \rightarrow \br$ is any auxilliary function on $\mathscr O^{\bm H}$ (cf. \eqref{def:auxiliary_function}). 
\vs
Lemma \eqref{lemm:local_bif_inv_comp_formula} is reformulated for the map $\mathscr F^{\bm H}$ as follows:
\begin{lemma} \rm \label{lemm:H_fixed_local_bif_inv_comp_formula}
    Under the assumptions \ref{a0}--\ref{a4}, \ref{b2} and using the notations \eqref{def:H_fixed_index_set}, \eqref{def:H_fixed_basic_degree}, the local bifurcation invariant at any $\bm H$-fixed critical point $(\lambda_0,0) = (\alpha_0,\beta_0,0) \in \Lambda^{\bm H}$ is given by
\begin{align}\label{eq:H_fixed_local_bif_inv_comp_formula}
       \omega_G^{\bm H}(\lambda_0) = \sum\limits_{(m,n,j,k) \in \Sigma^{\bm H}_0(\alpha_0,\beta_0)} \rho_{m,n,j,k}(\alpha_0,\beta_0) \deg_{\mathcal V_{m,j}^{[2 \nmid n]}},
    \end{align}
where the coefficients $\rho_{m,n,j,k}(\alpha,\beta) \in \bz$ are obtained via the formula \eqref{def:local_bif_inv_coeff}.
\end{lemma}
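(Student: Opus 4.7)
The plan is to mirror the proof of Lemma \ref{lemm:local_bif_inv_comp_formula} verbatim, but working inside the $\bm H$-fixed point subspace $\mathscr H^{\bm H}$ and exploiting the key structural simplification that $\mathscr H^{\bm H}$ contains no $S^1$-trivial ($m=0$) isotypic component. First, I would verify that $(\lambda_0,0)\in\Lambda^{\bm H}$ is isolated in $\Lambda^{\bm H}$ (which follows from \ref{b2}, since $\Lambda^{\bm H}\subset\Lambda$), pick a sufficiently small isolating cylinder $\mathscr O^{\bm H}$ on which the only zero of $\mathscr F^{\bm H}$ is $(\lambda_0,0)$, choose the auxiliary function \eqref{def:auxiliary_function} restricted to $\mathscr H^{\bm H}$, and form the $\bm H$-fixed complemented operator $\mathscr F_\theta^{\bm H}$. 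A straightforward linear $\bm G$-homotopy then connects $\mathscr F_\theta^{\bm H}$ to $(\theta,\mathscr A^{\bm H}(\lambda_0))$ inside $\mathscr O^{\bm H}$; admissibility of the homotopy is guaranteed by Lemma \ref{lemm:nemystskii_operator_frechet} together with the choice of $\upsilon$.

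Next, I would invoke the $\bm G$-isotypic decomposition
\[
\mathscr H^{\bm H}=\overline{\bigoplus_{j=0}^{r}\bigoplus_{m\in 2\bn-1}\mathscr H_{m,j}^{0}\oplus\mathscr H_{m,j}^{1}},
\]
and refine it further using \ref{a4} into the isotypic blocks $\mathscr E_{m,n,j,k}\simeq\mathcal W_m\otimes\mathcal V_j^{[2\nmid n]}$. This is precisely where the simplification over Lemma \ref{lemm:local_bif_inv_comp_formula} appears: because every $m$ appearing here is odd (hence positive), there is no $\bz_2\times\bz_2\times\Gamma$-equivariant factor to split off, and consequently no need for the analogue of condition \ref{b1}. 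The operator $\mathscr A^{\bm H}(\lambda_0)$ preserves each $\mathscr E_{m,n,j,k}$ and acts as the scalar $\mu_{m,n,j,k}(\lambda_0)\,\id$, with the eigenvalues taken from \eqref{def:eigenvalues_Anmjk}.

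Then I would apply the Splitting Lemma of the twisted $G$-equivariant degree (Appendix \ref{sec:appendix_twisted_comp_form}) across the $(m,n,j,k)$ blocks to obtain a formal sum of the individual block degrees
\[
\omega_G^{\bm H}(\lambda_0)=\sum_{\substack{m\in 2\bn-1\\ n,j,k}}\gdeg\bigl(\tilde{\mathscr A}^{\bm H}_{m,n,j,k}(\lambda_0),\tilde{\mathscr O}^{\bm H}_{m,n,j,k}\bigr).
\]
As in the proof of Lemma \ref{lemm:index_sets}, Remark \ref{rm:completely_continuous_field} guarantees that $\mathscr A^{\bm H}(\lambda_0)$ is a compact perturbation of the identity on $\mathscr H^{\bm H}$, so all but finitely many $\mu_{m,n,j,k}(\lambda_0)$ are bounded away from zero; for those blocks the block operator is a $G$-equivariant isomorphism and its twisted degree is the unit $(G)\in A_1^t(G)$, contributing nothing to the sum. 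For the finitely many blocks with $\mu_{m,n,j,k}(\lambda_0)=0$, i.e.\ the indices in $\Sigma_0^{\bm H}(\alpha_0,\beta_0)$, the standard computation of the twisted degree of a complemented linear operator on an irreducible isotypic component yields $\rho_{m,n,j,k}(\alpha_0,\beta_0)\deg_{\mathcal V_{m,j}^{[2\nmid n]}}$, where the coefficient is exactly the Brouwer degree of the complex eigenvalue curve \eqref{def:local_bif_inv_coeff}. Summing these contributions gives \eqref{eq:H_fixed_local_bif_inv_comp_formula}.

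The only nontrivial point will be verifying that the Splitting Lemma indeed applies in this infinite-dimensional, $W(\bm H)$-equivariant setting and that the reduction to finitely many blocks is rigorous. This is handled exactly as in Lemma \ref{lemm:index_sets}: compactness from Remark \ref{rm:completely_continuous_field} forces the eigenvalues $\mu_{m,n,j,k}(\lambda_0)$ to accumulate only at $1$, confining nontrivial contributions to a finite index set, after which only bookkeeping remains.
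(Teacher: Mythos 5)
The paper itself gives no explicit proof of Lemma~\ref{lemm:H_fixed_local_bif_inv_comp_formula}; it presents the statement as a direct reformulation of Lemma~\ref{lemm:local_bif_inv_comp_formula} after describing $\sigma(\mathscr A^{\bm H})$. Your proposal supplies exactly the argument the paper implicitly relies on: restrict to $\mathscr H^{\bm H}$, use the $\bm G$-isotypic decomposition with only odd $m$, linearize via an admissible homotopy, split across the blocks $\mathscr E_{m,n,j,k}$, and reduce to finitely many nontrivial blocks via compactness. In particular, you correctly isolate the structural point---absence of the $m=0$ component removes the $\bz_2\times\bz_2\times\Gamma$-factor in \eqref{eq:local_bif_inv_reformulation3} and hence the need for \ref{b1}/\ref{b1t}---which is precisely the remark the paper makes just before this lemma. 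One small caution: the linearized target of your homotopy should keep the $\lambda$-dependence, i.e.\ $(\theta(\lambda,u),\mathscr A^{\bm H}(\lambda)u)$ rather than freezing at $\lambda_0$, since the coefficient $\rho_{m,n,j,k}$ in \eqref{def:local_bif_inv_coeff} is the winding number of $\mu_{m,n,j,k}(\lambda)$ around $\partial B_\varepsilon(\lambda_0)$ and would vanish if $\mathscr A$ were constant in $\lambda$; this is a slip that the paper's own notation also makes, and it does not affect the soundness of the overall approach.
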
 
Likewise, Theorem \ref{thm:rab_alt} becomes:
\begin{theorem} \rm \label{thm:H_fixed_rab_alt}{\bf ($\bm H$-Fixed Rabinowitz' Alternative)}
Suppose that conditions \ref{a0}--\ref{a4}, \ref{b2t} are satisfied and let $\mathcal U \subset \br \times \br \times \mathscr H^{\bm H}$ be any open bounded $G$-invariant with $\partial \mathcal U \cap \Lambda^{\bm H} = \emptyset$. If $\mathcal C \subset \mathscr S^{\bm H}$ is a branch of nontrivial solutions to \eqref{eq:H_fixed_operator_equation} bifurcating from an  critical point $(\alpha_0,\beta_0,0) \in \mathcal U \cap \Lambda^{\bm H}$, then one has the following alternative:
\begin{enumerate}[label=$(\alph*)$]
\item\label{alt_a}  either $\mathcal C \cap \partial \mathcal U \neq \emptyset$;
    \item\label{alt_b} or there exists a finite set
    \begin{align*}
        \mathcal C \cap \Lambda^{\bm K} = \{ (\lambda_0,0),(\lambda_1,0), \ldots, (\lambda_{n_0},0) \},
    \end{align*}
    satisfying the following relation
    \begin{align*}
        \sum\limits_{i=1}^{n_0}\omega_G^{\bm H}(\lambda_i) = 0.
    \end{align*}
\end{enumerate} 
\end{theorem}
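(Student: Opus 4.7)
The plan is to argue by contrapositive: assuming that alternative \ref{alt_a} fails (i.e.\ $\mathcal C \cap \partial \mathcal U = \emptyset$), I will establish alternative \ref{alt_b} by constructing a $\bm G$-invariant isolating neighborhood of $\overline{\mathcal C}$ on which the equivariant degree of a suitable complemented operator can be computed in two different ways: once via additivity over the isolating cylinders of the finitely many critical points of $\overline{\mathcal C} \cap \Lambda^{\bm H}$, and once as zero via an admissible homotopy.

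The first step is to use the fact that $\mathscr F^{\bm H}$ is a compact perturbation of the identity (cf.\ Remark \ref{rm:completely_continuous_field}, restricted to $\mathscr H^{\bm H}$), so that the intersection of any bounded, closed subset of $\br \times \br \times \mathscr H^{\bm H}$ with $(\mathscr F^{\bm H})^{-1}(0)$ is compact. Applying a Kuratowski--Whyburn separation lemma to the compact metric space $\overline{\mathscr S^{\bm H}} \cap \overline{\mathcal U}$, together with the assumption $\mathcal C \cap \partial \mathcal U = \emptyset$, will yield an open bounded $\bm G$-invariant set $\mathcal N \subset \mathcal U$ containing $\overline{\mathcal C}$ with $\partial \mathcal N \cap \bigl(\overline{\mathscr S^{\bm H}} \cup \Lambda^{\bm H}\bigr) = \emptyset$. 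Combined with the discreteness assumption \ref{b2t}, this forces the set $\mathcal N \cap \Lambda^{\bm H}$ to be finite, and since $\mathcal N$ is a neighborhood of the connected component $\mathcal C$, one obtains $\overline{\mathcal C} \cap \Lambda^{\bm H} = \mathcal N \cap \Lambda^{\bm H} = \{(\lambda_0,0),(\lambda_1,0),\ldots,(\lambda_{n_0},0)\}$.

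Next, for each critical point $(\lambda_i,0)$, I will select a small $\bm H$-fixed isolating cylinder $\mathscr O_i \subset \mathcal N$ with $\overline{\mathscr O_i} \cap \overline{\mathscr O_{i'}} = \emptyset$ for $i \neq i'$ and construct a single auxiliary function $\theta : \br \times \br \times \mathscr H^{\bm H} \rightarrow \br$ which is strictly negative on $(\mathcal N \setminus \bigcup_i \mathscr O_i) \cap M$ and satisfies \eqref{def:auxiliary_function_conditions} on each $\mathscr O_i$ individually; this can be obtained by gluing the local model \eqref{def:auxiliary_function} with a $\bm G$-invariant partition of unity. Then the complemented map $\mathscr F^{\bm H}_\theta$ is $\mathcal N$-admissible, since no zero of $\mathscr F^{\bm H}$ lies on $\partial \mathcal N$, and the only zeros of $(\theta, \mathscr F^{\bm H})$ on $M \cap \mathcal N$ are the points $(\lambda_i,0)$, each sitting inside its $\mathscr O_i$. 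By the additivity (excision) property of the twisted $\bm G$-equivariant degree,
\[
\bm G\text{-deg}(\mathscr F^{\bm H}_\theta, \mathcal N) \;=\; \sum_{i=0}^{n_0} \bm G\text{-deg}(\mathscr F^{\bm H}_\theta, \mathscr O_i) \;=\; \sum_{i=0}^{n_0} \omega_G^{\bm H}(\lambda_i).
\]
On the other hand, by linearly deforming $\theta$ to the constant auxiliary function $\theta_s(\lambda,u) := -s$ (with $s>0$ sufficiently large) through auxiliary functions whose zero sets remain in $\mathcal N \setminus \partial \mathcal N$, the pair $(\theta_s, \mathscr F^{\bm H})$ has no zeros in $\overline{\mathcal N}$, so by homotopy invariance $\bm G\text{-deg}(\mathscr F^{\bm H}_\theta, \mathcal N) = 0$. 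Comparing the two computations gives $\sum_{i=0}^{n_0} \omega_G^{\bm H}(\lambda_i) = 0$, as required.

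The main obstacle will be the separation step producing $\mathcal N$: this is a purely topological argument, but it requires a careful application of the Kuratowski--Whyburn principle in the compact metric setting, leveraging both the compactness afforded by the compact-perturbation-of-identity structure and the discreteness of $\Lambda^{\bm H}$ in order to guarantee that $\overline{\mathcal C}$ supports only finitely many critical points and can be surrounded by a neighborhood whose boundary is entirely disjoint from the solution set. The subsequent construction of a globally compatible auxiliary function $\theta$ on $\mathcal N$ is standard but tedious; once both are in place, the two-sided degree computation is immediate from the properties of the twisted equivariant degree recalled in Appendix \ref{sec:appendix_twisted_deg}.
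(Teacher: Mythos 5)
The paper does not actually supply a proof of Theorem \ref{thm:H_fixed_rab_alt}: Theorem \ref{thm:rab_alt} is stated with its proof delegated to \cite{book-new}, and the $\bm H$-fixed version is then presented as an immediate restatement obtained by replacing $G$ with $\bm G := W(\bm H) \simeq S^1 \times \bz_2 \times \Gamma$, $\mathscr H$ with $\mathscr H^{\bm H}$, $\mathscr F$ with $\mathscr F^{\bm H}$, $\Lambda$ with $\Lambda^{\bm H}$, and $\omega_G$ with $\omega_G^{\bm H}$. Your proposal instead reconstructs the direct proof, and it follows exactly the standard Rabinowitz--Ize template that underlies the cited result: compactness of the solution set from the compact-perturbation-of-identity structure, a Whyburn--Kuratowski separation to manufacture an admissible isolating neighborhood, a globally glued auxiliary function, and the comparison of the degree via additivity (giving $\sum_i \omega_G^{\bm H}(\lambda_i)$) against a nowhere-vanishing homotopy (giving $0$). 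So the route is ``genuinely different'' only in the trivial sense that the paper cites a reference whereas you spell out the argument; the substance of your sketch is what one would find in \cite{book-new}.

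Two places where your sketch glosses over steps that actually require care. First, the assertion $\overline{\mathcal C}\cap\Lambda^{\bm H}=\mathcal N\cap\Lambda^{\bm H}$ is not automatic from ``$\mathcal N$ is a neighborhood of $\mathcal C$'': a priori $\mathcal N$ could capture a critical point $(\lambda,0)\in\Lambda^{\bm H}$ that lies near but not on $\overline{\mathcal C}$, and then the degree identity would include a spurious term. You need to use the discreteness hypothesis \ref{b2t} together with the closedness and compactness of $\overline{\mathcal C}$ to further shrink $\mathcal N$ so that $\mathcal N\cap M$ is a small tube around $\overline{\mathcal C}\cap M$ missing every other critical point; the Whyburn separation alone (which separates $\overline{\mathcal C}$ from the rest of $\overline{\mathscr S^{\bm H}}\cap\overline{\mathcal U}$) does not handle isolated non-bifurcating critical points outside $\overline{\mathscr S^{\bm H}}$. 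Second, for the concluding homotopy from $\theta$ to the constant $-s$ to be $\mathcal N$-admissible, you need $\theta<0$ on $\partial\mathcal N\cap M$; your construction only guarantees strict negativity on $(\mathcal N\setminus\bigcup_i\mathscr O_i)\cap M$, and limits of strictly negative values can vanish on the boundary. State the requirement on the closed set $(\overline{\mathcal N}\setminus\bigcup_i\mathscr O_i)\cap M$ and the gluing should be arranged accordingly. Both issues are standard and fixable; with those repairs, your argument is complete.
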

In turn, we can further refine the index set \eqref{def:H_fixed_index_set}, as was done in Section \ref{sec:computation_local_bif_inv} to obtain the local equivariant bifurcation result (cf. Theorem \ref{thm:main_local_bif}),
in the following manner: 
Take a orbit type of maximal kind $(H) \in \mathfrak M_1$ and positive folding $m > 0$ with $2 \nmid m$.
Under condition \ref{b2}, we can always enumerate the the $\bm H$-fixed critical set 
\begin{align} \label{eq:H_fixed_critical_set_enumerated}
    \Lambda^{\bm H} = \{ (\alpha_i,\beta_i,0) \}_{i=1}^{\infty}, 
\end{align}
and, with respect to these indices, put
\begin{align} \label{def:H_fixed_index_set_m_folding_global}
     \Sigma_s^{\bm H} := \{ (n,j,k,i) : i \in \bn, \; (n,j,k) \in \Sigma_s(\alpha_i,\beta_i) \},
\end{align}
for any $s > 0$.
We now have all the necessary components to formulate our main global equivariant bifurcation result.
\begin{theorem}  \rm \label{thm:main_global_bif}
    Under conditions \ref{a0}--\ref{a4}, with a critical set $\Lambda^{\bm H}$ satisfying condition \ref{b2}, if there is a odd folding $s \in \bn$ and an orbit type of maximal kind $(H) \in \mathfrak M_1$ satisfying
    \begin{align} \label{eq:global_bif_nontrivial_coeff_sH}
    \sum\limits_{ (n,j,k,i) \in \Sigma_s^{\bm H}} \rho_{s,n,j,k}(\alpha_i,\beta_i) \operatorname{coeff}^{{}^{s}H}( \deg_{\mathcal V_{s,j}^{[2 \nmid n]}})  \neq 0,
    \end{align}
    then there exists a critical point $(\lambda_0,0) = (\alpha_0,\beta_0,0) \in \Lambda^{\bm H}$ and an unbounded branch of non-stationary solutions $\mathcal C \subset \mathscr S^{\bm H}$ bifurcating from $(\lambda_0,0)$ with symmetries at least $({}^{s}H)$.
\end{theorem}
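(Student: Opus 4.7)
The plan is to adapt the coefficient extraction used in the proof of Theorem \ref{thm:main_local_bif} to the $\bm H$-fixed setting, and then upgrade the resulting local branch to an unbounded one by a contradiction argument based on the $\bm H$-fixed Rabinowitz Alternative (Theorem \ref{thm:H_fixed_rab_alt}), with non-stationarity supplied automatically by Remark \ref{rm:non_stationary_branches}.

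First, I would apply the same recurrence-formula-based computation used in the proof of Theorem \ref{thm:main_local_bif} to the $\bm H$-fixed invariant formula \eqref{eq:H_fixed_local_bif_inv_comp_formula}, obtaining
\[
\operatorname{coeff}^{({}^{s}H)}\!\bigl(\omega_G^{\bm H}(\alpha_i,\beta_i)\bigr) = \sum_{(n,j,k) \in \Sigma_s(\alpha_i,\beta_i)} \rho_{s,n,j,k}(\alpha_i,\beta_i)\, \operatorname{coeff}^{({}^{s}H)}\!\bigl(\deg_{\mathcal V_{s,j}^{[2 \nmid n]}}\bigr)
\]
at each $(\alpha_i,\beta_i,0)\in \Lambda^{\bm H}$. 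Hypothesis \eqref{eq:global_bif_nontrivial_coeff_sH} therefore translates into $\sum_i \operatorname{coeff}^{({}^{s}H)}(\omega_G^{\bm H}(\alpha_i,\beta_i)) \neq 0$, and in particular at least one critical point $(\alpha_{i_0},\beta_{i_0},0)\in \Lambda^{\bm H}$ carries a nonzero $({}^{s}H)$-coefficient. The $\bm H$-fixed analogue of Theorem \ref{thm:main_local_bif}---a direct consequence of the $\bm H$-fixed version of the abstract Krasnosel'skii result applied to \eqref{eq:H_fixed_operator_equation}---then yields a branch $\mathcal C \subset \mathscr S^{\bm H}$ bifurcating from $(\alpha_{i_0},\beta_{i_0},0)$ with symmetries at least $({}^{s}H)$, and by Remark \ref{rm:non_stationary_branches} every point on $\mathcal C$ is non-stationary.

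To establish unboundedness I would argue by contradiction. Suppose every branch of $\mathscr S^{\bm H}$ bifurcating from a critical point with nonzero $({}^{s}H)$-coefficient is bounded. For each such branch $\mathcal C$, the discreteness of $\Lambda^{\bm H}$ (from condition \ref{b2}) allows one to enclose $\overline{\mathcal C}$ in a bounded open $G$-invariant set $\mathcal U_{\mathcal C} \subset \br \times \br \times \mathscr H^{\bm H}$ with $\partial \mathcal U_{\mathcal C} \cap \Lambda^{\bm H} = \emptyset$. Boundedness of $\mathcal C$ forces $\mathcal C \cap \partial \mathcal U_{\mathcal C} = \emptyset$, so alternative (a) of Theorem \ref{thm:H_fixed_rab_alt} fails and (b) gives a finite set $\overline{\mathcal C} \cap \Lambda^{\bm H}$ satisfying $\sum_{(\lambda_k,0)\in \overline{\mathcal C}\cap \Lambda^{\bm H}} \omega_G^{\bm H}(\lambda_k) = 0$; taking the $({}^{s}H)$-coefficient, this finite sum vanishes. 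Distinct branches are pairwise disjoint as maximal connected components of $\overline{\mathscr S^{\bm H}}$, and the contrapositive of the local theorem ensures that every critical point contributing nontrivially to the hypothesis sum lies on some such branch. Summing the vanishing identities across all bounded branches therefore exhausts the total sum $\sum_i \operatorname{coeff}^{({}^{s}H)}(\omega_G^{\bm H}(\alpha_i,\beta_i))$ and forces it to vanish, contradicting \eqref{eq:global_bif_nontrivial_coeff_sH}. Hence some critical point $(\lambda_0,0) \in \Lambda^{\bm H}$ supports an unbounded branch $\mathcal C \subset \mathscr S^{\bm H}$ with symmetries at least $({}^{s}H)$, which by Remark \ref{rm:non_stationary_branches} consists entirely of non-stationary solutions.

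The main obstacle is the combinatorial bookkeeping in the contradiction step: one must justify that summing the local vanishing identities across all bounded branches correctly reconstructs the full hypothesis sum, which requires both the disjointness of branches (so no critical point is double-counted) and the fact that every critical point contributing nontrivially to \eqref{eq:global_bif_nontrivial_coeff_sH} actually lies on a branch with $({}^{s}H)$ symmetries. A secondary delicacy is interpreting the a priori infinite sum in \eqref{eq:global_bif_nontrivial_coeff_sH}; this is best handled by restricting attention to any finite truncation whose $({}^{s}H)$-coefficient is already nonzero and running the contradiction argument on the finite collection of branches passing through those critical points.
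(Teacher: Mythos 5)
Your proposal follows the same route as the paper: compute the $({}^{s}H)$-coefficient of the summed $\bm H$-fixed local bifurcation invariants via the formula from Lemma \ref{lemm:H_fixed_local_bif_inv_comp_formula} and the coefficient extraction from Theorem \ref{thm:main_local_bif}, then invoke the $\bm H$-fixed Rabinowitz Alternative to obtain unboundedness, with Remark \ref{rm:non_stationary_branches} supplying non-stationarity. The paper's own proof stops after the coefficient computation and the combinatorial rearrangement identifying that coefficient with the hypothesis sum, declaring ``the result follows''; you supply in full what the paper leaves implicit, namely the contradiction argument (bound every branch, apply alternative (b) of Theorem \ref{thm:H_fixed_rab_alt} to each, use disjointness of branches as maximal connected components of $\overline{\mathscr S^{\bm H}}$ and the contrapositive of the local result to show every contributing critical point lies on exactly one such branch, and conclude the total $({}^{s}H)$-coefficient vanishes). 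That unpacking is genuinely useful since the paper's proof, taken literally, asserts only that the hypothesis equals a certain coefficient and never says why nonvanishing of the \emph{total} sum forces an unbounded branch. Your final caveat about the a priori infinite sum is also well-placed: the paper never addresses convergence or rearrangement, and one must observe either that only finitely many terms are nonzero in applications (as in Corollary \ref{cor:main_global_bif}, where all contributions share a sign) or that the argument is run on a finite truncation together with the finitely many bounded branches it touches, being careful that a branch through a truncated critical point may also touch critical points outside the truncation, whose contributions enter the Rabinowitz vanishing identity for that branch and so must be accounted for.
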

\begin{proof}
Making the same arguments employed in Theorem \ref{thm:main_local_bif}, the coefficient of $({}^{s}H)$ in the sum of local bifurcation invariants associated with the enumerated $\bm H$-critical set \eqref{eq:H_fixed_critical_set_enumerated} is given by
\begin{align*}
    \operatorname{coeff}^{{}^{s}H}\left(\sum\limits_{(\lambda_i,0) \in \Lambda^{\bm H}}\omega_G^{\bm H}(\lambda_i)\right) = \sum\limits_{(\lambda_i,0) \in \Lambda^{\bm H}} \sum\limits_{ (n,j,k) \in \Sigma_s(\alpha_i,\beta_i)} \rho_{s,n,j,k}(\alpha_i,\beta_i) \operatorname{coeff}^{{}^{s}H}( \deg_{\mathcal V_{s,j}^{[2 \nmid n]}}),
\end{align*}
and the result follows from the observation 
\[
\sum\limits_{(\lambda_i,0) \in \Lambda^{\bm H}} \sum\limits_{ (n,j,k) \in \Sigma_s(\alpha_i,\beta_i)} \rho_{s,n,j,k}(\alpha_i,\beta_i) \deg_{\mathcal V_{s,j}^{[2 \nmid n]}} = \sum\limits_{ (n,j,k,i) \in \Sigma_s^{\bm H}} \rho_{s,n,j,k}(\alpha_i,\beta_i) \deg_{\mathcal V_{s,j}^{[2 \nmid n]}}.
\]
\end{proof}
\vs
In the same way Corollary \ref{cor:main_local_bif} followed from Theorem \ref{thm:main_local_bif} (cf. Remark \ref{rm:crossing_sign}), Theorem \ref{thm:main_global_bif} implies:
\begin{corollary} \label{cor:main_global_bif}
Under conditions \ref{a0}--\ref{a4}, with a critical set $\Lambda^{\bm H}$ satisfying condition \ref{b2}, if there is an odd, positive number $s \in \bn$ with
    \[
    \rho_{s,n,j,k}(\alpha_i,\beta_i) \cdot \rho_{s,n',j',k'}(\alpha_{i
'},\beta_{i'}) \geq 0
    \]
for every pair of quadruples $(n,j,k,i),(n',j',k',i') \in \Sigma_s^{\bm H}$ and an orbit type of maximal kind
$(H) \in \mathfrak M_1$ satisfying $(H) \in \mathfrak M_{1,j}^{[2 \nmid n]}$ for at least one quadruple $(n,j,k,i) \in \Sigma_s^{\bm H}$ with
$\rho_{s,n,j,k}(\alpha_i,\beta_i) \neq 0$, then there exists an unbounded branch of non-stationary solutions bifurcating from the critical point $(\lambda_i,0) = (\alpha_i,\beta_i,0) \in \Lambda^{\bm H}$ and with symmetries at least $({}^{s}H)$.
\end{corollary}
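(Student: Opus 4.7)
The plan is to derive Corollary \ref{cor:main_global_bif} as a direct consequence of Theorem \ref{thm:main_global_bif} and Lemma \ref{lemm:basicdegree_maxorbtyps_coeff}, in exact analogy with the local proof of Corollary \ref{cor:main_local_bif}. Specifically, I will verify that the two hypotheses force the displayed coefficient sum
\[
\sum_{(n,j,k,i) \in \Sigma_s^{\bm H}} \rho_{s,n,j,k}(\alpha_i,\beta_i) \operatorname{coeff}^{{}^{s}H}\!\bigl(\deg_{\mathcal V_{s,j}^{[2 \nmid n]}}\bigr) \neq 0,
\]
at which point Theorem \ref{thm:main_global_bif} yields an unbounded branch of non-stationary solutions in $\mathscr S^{\bm H}$ with symmetries at least $({}^{s}H)$ bifurcating from some $(\alpha_0,\beta_0,0) \in \Lambda^{\bm H}$.

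Without loss of generality (replacing signs uniformly if needed), I will assume that $\rho_{s,n,j,k}(\alpha_i,\beta_i) \geq 0$ for every $(n,j,k,i) \in \Sigma_s^{\bm H}$; this is exactly the content of the uniform-sign hypothesis $\rho \cdot \rho' \geq 0$. By assumption there is a distinguished quadruple $(n_0,j_0,k_0,i_0) \in \Sigma_s^{\bm H}$ with $\rho_{s,n_0,j_0,k_0}(\alpha_{i_0},\beta_{i_0}) > 0$ and with $(H) \in \mathfrak M_{1,j_0}^{[2 \nmid n_0]}$. Applying the $s$-folding relation $\Psi_s(\mathfrak M_{1,j_0}^{[2 \nmid n_0]}) = \mathfrak M_{s,j_0}^{[2 \nmid n_0]}$ yields $({}^{s}H) \in \mathfrak M_{s,j_0}^{[2 \nmid n_0]}$; Lemma \ref{lemm:basicdegree_maxorbtyps_coeff} then delivers the strict inequality $\operatorname{coeff}^{{}^{s}H}(\deg_{\mathcal V_{s,j_0}^{[2 \nmid n_0]}}) > 0$, so that the distinguished summand contributes strictly positively.

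It remains to show that every other summand is non-negative, after which the total sum is strictly positive and Theorem \ref{thm:main_global_bif} applies. For a generic $(n,j,k,i) \in \Sigma_s^{\bm H}$, the coefficient $\operatorname{coeff}^{{}^{s}H}(\deg_{\mathcal V_{s,j}^{[2 \nmid n]}})$ vanishes whenever $({}^{s}H)$ does not appear as an orbit type in $\Phi_1^t(G;\mathcal V_{s,j}^{[2 \nmid n]} \setminus \{0\})$ (by construction of the twisted basic degree); when it does appear, Lemma \ref{lemm:basicdegree_maxorbtyps_coeff} again gives strict positivity, provided that $({}^{s}H)$ is maximal in that representation. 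Combined with the non-negativity of each $\rho$, this makes every summand non-negative and the distinguished one strictly positive.

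The delicate point, and the only step that is not purely formal, is ruling out that $({}^{s}H)$ could appear \emph{non-maximally} as an orbit type in some $\mathcal V_{s,j}^{[2 \nmid n]}$ with $(j,[2 \nmid n]) \neq (j_0,[2 \nmid n_0])$, since in that case the general recurrence formula for the twisted basic degree permits an arbitrary integer coefficient with possibly negative sign. To resolve this I would invoke the structural separation provided by Lemma \ref{lemm:distjoint_maxorbtyp_sets} together with the identification $\mathfrak M_m = \bigcup_{j} (\mathfrak M_{m,j}^0 \cup \mathfrak M_{m,j}^1)$: an orbit type of maximal kind in a given irreducible $G$-representation has non-trivial fixed-point set only in that representation, forcing the potentially problematic contributions to vanish outright. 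Once this structural observation is in place, the argument closes mechanically and Theorem \ref{thm:main_global_bif} produces the unbounded non-stationary branch.
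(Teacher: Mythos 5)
Your overall structure — reduce to Theorem \ref{thm:main_global_bif} by showing the coefficient sum is nonzero, using Lemma \ref{lemm:basicdegree_maxorbtyps_coeff} for strict positivity of the distinguished term and non-negativity of the rest — is exactly the paper's intended (and stated) route; the paper simply asserts it as a direct consequence, and you fill in the gap. You correctly identify the only non-formal step, namely controlling the sign of $\operatorname{coeff}^{{}^{s}H}(\deg_{\mathcal V_{s,j}^{[2\nmid n]}})$ for $(j,[2\nmid n])\neq(j_0,[2\nmid n_0])$.

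However, your resolution of that step is misstated. You claim that a maximal-kind orbit type has non-trivial fixed-point set in only one irreducible $G$-representation, and you cite Lemma \ref{lemm:distjoint_maxorbtyp_sets}. Neither is quite right: Lemma \ref{lemm:distjoint_maxorbtyp_sets} disjoints the sets $\mathfrak M_m$ across \emph{different folding indices} $m\neq m'$, and says nothing about different $(j,[2\nmid n])$ at the same folding; and it is not generally true that a maximal orbit type fixes points in only one isotypic component of $\mathscr H_s$. The correct observation is simpler and follows from the definitions: if $\mathcal V_{s,j}^{[2\nmid n]}{}^{{}^{s}H}\neq\{0\}$, then $({}^{s}H)\in\Phi_1^t(G;\mathscr H_{s,j}^{[2\nmid n]}\setminus\{0\})$, and since $({}^{s}H)\in\mathfrak M_s$ (being $\Psi_s$ of a maximal element), we have $({}^{s}H)\in\Phi_1^t(G;\mathscr H_{s,j}^{[2\nmid n]}\setminus\{0\})\cap\mathfrak M_s=\mathfrak M_{s,j}^{[2\nmid n]}$; Lemma \ref{lemm:basicdegree_maxorbtyps_coeff} then gives a \emph{strictly positive} coefficient. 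Otherwise $\mathcal V_{s,j}^{[2\nmid n]}{}^{{}^{s}H}=\{0\}$ and the recurrence formula forces the coefficient to vanish. So a ``non-maximal occurrence with negative coefficient'' cannot arise: any occurrence of $({}^{s}H)$ in a basic degree at folding $s$ is automatically maximal there. With this correction, the rest of your argument (WLOG all $\rho\ge 0$, distinguished term $>0$, remaining terms $\ge 0$, then apply Theorem \ref{thm:main_global_bif}) closes cleanly.
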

\section{A Motivating Example: Symmetric System of $N$ Vibrating Strings With Nonlinear Forces, Damping and Delay} \label{sec:n_strings}
As a preliminary motivating example, consider the vibrations of a string of length $\pi$, with both ends fastened, subjected to nonlinear forces, non-trivial damping and delay, modeled by the following boundary value problem
\begin{equation}\label{eq:1DexampleND}
\begin{cases}
\nu^2 \partial^2_t u - \partial^2_x u + \delta \partial_t u + \beta S_\tau u = u^3 +  u, \quad u(t,x) \in \br; \\
u(t,-\frac{\pi}{2})= u(t,\frac{\pi}{2}) = 0, \quad x \in [-\frac{\pi}{2},\frac{\pi}{2}], \; t \in \br^+; \\
u(t + 2\pi, x) = u(t,x),
\end{cases}
\end{equation}
where $u(t,x) \in V$, $(\alpha,\beta) \in \br \times \br$ and $(\nu,\delta,\tau) \in \bq \times \br^+ \times \br^+$ have the same interpretation they were given in Section \ref{sec:intro}. While the equations \eqref{eq:1DexampleND} faithfully capture the vibrations of a single stringed system, they are not a particularly realistic model of anything more elaborate. Indeed, our interest in strings often arises from physically integrated phenomena, such as the playing of musical instruments, which are rarely single stringed. 
\vs
A more useful model might instead require an arrangement of some number of coupled vibrating strings. For example, let $\mathbb G$ be an undirected graph, invariant under the permutation action of a finite group $\Gamma$, with $N$ vertices representing a collection of vibrating strings and with edges representing the coupling relations between vertices. Our two-parameter $(\alpha,\beta) \in \br \times \br$ model for this configuration is the system 
\begin{equation}\label{eq:NDexample}
\begin{cases}
\nu^2 \partial^2_t u - \partial^2_x u + \delta \partial_t u + \beta S_\tau u = |u|^pu - \zeta(\alpha) (L + \id)u, \quad u(t,x) \in V; \\
u(t,-\frac{\pi}{2})= u(t,\frac{\pi}{2}) = 0, \quad x \in [-\frac{\pi}{2},\frac{\pi}{2}], \; t \in \br^+; \\
u(t + 2\pi, x) = u(t,x) ,
\end{cases}
\end{equation}
where $|u|^pu := (|u_1|^pu, |u_2|^pu, \ldots, |u_N|^pu)$ for any even $p > 1$, $V:= \br^N$, $\zeta: \br \rightarrow \br$ is a continuous function related to the coupling strength between strings and $L:V \rightarrow V$ is the {\it graph Laplacian matrix} for $\mathbb G$.
\begin{remark} \rm
Since the symmetry group $\Gamma$ is left ambiguous in this section, the exact symmetric configuration of our strings is not specified. As we shall see, this does not prevent from making some definitive local and global symmetric bifurcation predictions for \ref{eq:NDexample}.
\end{remark}
Notice that the system \eqref{eq:NDexample} satisfies condition \ref{a0} by construction and that $u^3$ is a differentiable function satisfying the three conditions \ref{a1}--\ref{a3}. Notice also that, since $L$ is a $\Gamma$-equivariant, symmetric positive-semidefinite matrix, it has real, non-negative eigenvalues $z_{j,k}$ corresponding to eigenspaces $E_{j,k} \simeq \mathcal V_{j}$
such that condition \ref{a4} is satisfied for the $\Gamma$-equivariant family of matrices $A := \zeta (L + \id): \br \rightarrow L^\Gamma(V)$ with the eigenvalues
\begin{align} \label{def:zeta_jk_exampleND}
    \zeta_{j,k}(\alpha):= \zeta(\alpha) (z_{j,k} + 1), \quad j \in \{0,\ldots,r\}, \; k \in \{1,\ldots,m_j\}.
\end{align}
As a final prerequisite for using the computational formula \eqref{eq:H_fixed_local_bif_inv_comp_formula} suggested by Lemma \ref{lemm:H_fixed_local_bif_inv_comp_formula} to describe the local bifurcation invariant at each $\bm H$-fixed critical point, we must verify that the $\bm H$-fixed critical set is discrete. With this in mind, take $(\alpha_0,\beta_0,0) \in \Lambda^{\bm H}$, $(m,n,j,k) \in \Sigma^{\bm H}_0(\alpha_0,\beta_0)$ and notice that the Jacobian matrix
\begin{align} \label{eq:Jacobian_mu_mnj}
 D \mu_{m,n,j,k}(\alpha_0,\beta_0) := 
 \frac{1}{\xi_{m,n}}\begin{pmatrix*}[r]
   \partial_{\alpha}\zeta_{j,k}(\alpha_0) & \cos(m \tau) \\
    0 & -\sin(m \tau)
\end{pmatrix*}, 
 \end{align}
is nonsingular if and only if both $\partial_{\alpha}\zeta_{j,k}(\alpha_0) \neq 0$ and $m \tau \notin \pi \bz$. With this in mind, we propose the following pair of assumptions:
\begin{enumerate}[label=($D_\arabic*$)]
%\item\label{d1} the model variables $c,d \in \br \setminus \{0\}$ are such that $d > c$ and $2c > -d$; 
\item\label{d1} the function $\zeta: \br \rightarrow \br$ is differentiable and strictly monotonic;
\item\label{d2} the model variable $\tau$ is such that $\tau \notin \pi \bq$.
\end{enumerate}
Under conditions \ref{d1}--\ref{d2}, a trivial solution $(\alpha_0,\beta_0,0) \in \br \times \br \times \mathscr H$ is a $\bm H$-fixed critical point for \eqref{eq:NDexample} if and only there exists a triple $(m,n,j,k) \in \Sigma$ with $2 \nmid m$ satisfying both
\begin{align} \label{eq:critical_condition_alpha}
    \alpha_0 = \zeta^{-1}\left(\frac{\nu^2 m^2 - n^2 - \delta m \cot(m \tau)}{z_{j,k}+1} \right),
\end{align}
and
\begin{align} \label{eq:critical_condition_beta}
    \beta_0 = \frac{\delta m}{\sin(m \tau)}.
\end{align}
\begin{lemma} \rm \label{lemm:isolated_cpts_example3D}
Under conditions \ref{d1}--\ref{d2} every $\bm H$-fixed critical set $\Lambda^{\bm H}$ is discrete.
\end{lemma}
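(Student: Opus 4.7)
The plan is to exploit the explicit characterization \eqref{eq:critical_condition_alpha}--\eqref{eq:critical_condition_beta} of $\bm H$-fixed critical parameter pairs and reduce discreteness of $\Lambda^{\bm H}$ to the purely combinatorial claim that every bounded set $Q \subset \br \times \br$ contains only finitely many such pairs. Since a subset of $\br \times \br$ is discrete iff it has no finite accumulation point, this finiteness statement is equivalent to the desired conclusion.

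Before counting, I would verify that \ref{d1}--\ref{d2} render the characterization unambiguous: under \ref{d2} one has $m\tau \notin \pi\bz$ for every $m \in \bn$, so $\sin(m\tau) \neq 0$ and $\cot(m\tau)$ is finite, making $\beta_m := \delta m/\sin(m\tau)$ well defined; under \ref{d1}, $\zeta$ is a homeomorphism onto its image with continuous inverse, so $\alpha_{m,n,j,k}$ is a well-defined real number whenever the right-hand side of \eqref{eq:critical_condition_alpha} falls in $\zeta(\br)$. Each element of $\Lambda^{\bm H}$ is therefore uniquely indexed by an odd $m$ together with an admissible triple $(n,j,k)$.

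The counting then proceeds in two parts. Fix a bounded $Q \subset \br \times \br$. Since $|\sin(m\tau)| \leq 1$, one has $|\beta_m| = \delta m/|\sin(m\tau)| \geq \delta m$, and with $\delta > 0$ this restricts the odd integers $m$ for which $\beta_m$ lies in the $\beta$-projection of $Q$ to a finite set. For each such $m$, the values $\cot(m\tau)$ and the eigenvalues $z_{j,k}$ (indexed by the finite collection of $\Gamma$-isotypic components of $V$) are bounded, and $\zeta(\alpha_0)$ varies in a bounded range as $\alpha_0$ runs over the $\alpha$-projection of $Q$ by continuity of $\zeta$ on a compact interval. Rearranging \eqref{eq:critical_condition_alpha} as
\[
n^2 = \nu^2 m^2 - \delta m \cot(m\tau) - \zeta(\alpha_0)(z_{j,k}+1)
\]
then bounds $n^2$ from above, leaving only finitely many admissible $n \in \bn$. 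Combined with the finiteness of the $(j,k)$ index set, this shows $Q$ contains only finitely many critical pairs, and discreteness follows.

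The only subtlety worth flagging as a (minor) obstacle is the joint role of \ref{d1}--\ref{d2}: dropping \ref{d2} would permit $\sin(m\tau)=0$ for infinitely many positive integers $m$ (since $\tau \in \pi\bq$ with denominator $q$ gives vanishing at every multiple of $q$), producing a degenerate stratum of the critical set; dropping \ref{d1} would collapse the Jacobian \eqref{eq:Jacobian_mu_mnj} along the $\alpha$-direction and allow each individual zero of $\mu_{m,n,j,k}$ to fail to be locally isolated. Both conditions together are precisely what the two-step counting argument consumes, after which discreteness is immediate.
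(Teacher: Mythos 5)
Your proof is correct and takes a genuinely different route from the paper's. The paper argues locally around a given critical point $(\lambda_0,0)$: it invokes equicontinuity of the eigenvalue family $\{\mu_{m,n,j,k}\}$ to produce a uniform radius inside which the infinitely many eigenvalues not vanishing at $\lambda_0$ stay away from zero, then applies the inverse function theorem to the finitely many that do vanish, whose Jacobians \eqref{eq:Jacobian_mu_mnj} are nonsingular precisely because of \ref{d1}--\ref{d2}. You instead use the closed-form characterization \eqref{eq:critical_condition_alpha}--\eqref{eq:critical_condition_beta} to count critical pairs globally in any bounded region: the bound $|\beta_m|\ge\delta m$ limits $m$, rearranging the $\alpha$-equation then bounds $n^2$ once $\zeta$ is confined to a compact $\alpha$-interval, and $(j,k)$ ranges over a finite set from the start. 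This yields local finiteness of $\Lambda^{\bm H}$, which is strictly stronger than pointwise isolation (it also gives closedness) and is exactly what the Rabinowitz alternative requires. Your route is more elementary --- no equicontinuity estimate, no inverse function theorem --- but relies on the critical equation being solved explicitly; the paper's local argument would survive in settings where it is not.

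Two minor inaccuracies in your commentary, neither of which affects the validity of the counting. The equivalence ``discrete iff no finite accumulation point'' only holds in the direction you actually use; a discrete set may accumulate at a point lying outside itself. And your description of the role of \ref{d2} is slightly off: if $\sin(m\tau)=0$ for some $m$, the imaginary part of the numerator of $\mu_{m,n,j,k}$ reduces to $\delta m\neq 0$, so such indices contribute no critical point at all and the bound $|\beta_m|\ge\delta m$ still constrains the remaining $m$. What the counting argument genuinely consumes is the injectivity half of \ref{d1} (so $\zeta^{-1}$ is well defined); it is the paper's Jacobian-based argument that needs both the differentiability of $\zeta$ and $\tau\notin\pi\bq$.
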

\begin{proof}
Take $(\lambda_0,0) = (\alpha_0,\beta_0,0) \in \Lambda^{\bm H}$. Since the eigenvalues \eqref{def:mu_mnj_example} form an equicontinuous family, there exists a uniform $\hat \varepsilon > 0$ such that, for any $(m,n,j,k) \in \Sigma \setminus \Sigma^{\bm H}_0(\alpha_0,\beta_0)$, one has
\begin{align}
       \mu_{n,m,j,k}(\alpha,\beta) \neq 0,
\end{align}
for all $(\lambda,0) = (\alpha,\beta,0)$ with $0 < | \lambda - \lambda_0 | \leq \hat \varepsilon$. On the other hand, by the inverse function theorem, $(\alpha_0,\beta_0)$ is an isolated zero of every $\mu_{m,n,j,k}$ with $(m,n,j,k) \in \Sigma^{\bm H}_0(\alpha_0,\beta_0)$ and the result follows from the from finiteness of $\Sigma^{\bm H}_0(\alpha_0,\beta_0)$.
\end{proof}
\vs
Now that satisfaction of the conditions \ref{a0}--\ref{a4} and
the discreteness of $\Lambda^{\bm H}$ have been demonstrated, the local bifurcation invariant at any $\bm H$-fixed critical point $(\alpha_0,\beta_0,0) \in \Lambda^{\bm H}$ becomes
\begin{align}\label{eq:local_bif_inv_comp_formula_example}
        \omega_G(\lambda_0) =  \sum\limits_{(m,n,j,k) \in \Sigma_0^{\bm H}(\alpha_0,\beta_0)} \rho_{m,n,j,k}(\alpha_0,\beta_0) \deg_{\mathcal V_{m,j}^{[2 \nmid n]}},
    \end{align}
where
\begin{align}\label{def:local_bif_inv_coeff_example}
    \rho_{m,n,j,k}(\alpha_0,\beta_0) 
    := \deg(\mu_{m,n,j,k}(\alpha,\beta), B_{\varepsilon}(\lambda)).
\end{align}
Moreover, since $(\alpha_0,\beta_0) \in \br \times \br$ is a regular value for any $\mu_{m,n,j,k}$ with $(m,n,j,k) \in \Sigma^{\bm H}_0(\alpha_0,\beta_0)$, the Brouwer degrees \eqref{def:local_bif_inv_coeff_example} can be computed using the formula
\begin{align} \label{eq:rho_mnj_formula_exampleND}
    \rho_{m,n,j,k}(\alpha_0,\beta_0) 
    &= 
    \operatorname{sign}  \det  D \mu_{m,n,j,k}(\alpha_0,\beta_0) \\
 &= \operatorname{sign}  \det
 \begin{pmatrix*}[r]
   \partial_{\alpha}\zeta_{j,k}(\alpha_0) & \cos(m \tau) \\
    0 & -\sin(m \tau)
\end{pmatrix*} = \operatorname{sign}-\partial_{\alpha}\zeta_{j,k}(\alpha_0)\sin(m \tau). \nonumber
 \end{align}
We are now in a position to present the main local and global bifurcation results for this example. First, let's apply Theorem \ref{thm:main_local_bif} to detect the branches of non-stationary solutions to system \eqref{eq:NDexample} emerging from its $\bm H$-fixed critical points.
\begin{proposition} \label{prop:NDexample_local_bifurcation}
Under conditions \ref{d1}--\ref{d2}, every index quadruple $(m,n,j,k) \in 2 \bn - 1 \times \bn \times \{0,\ldots,r\} \times \{1,\ldots, m_j\}$ is associated with a parameter pair 
\[
\alpha_{m,n,j,k} := \zeta^{-1}\left(\frac{\nu^2 m^2 - n^2 - \delta m \cot(m \tau)}{z_{j,k}+1} \right), \quad \beta_{m,n,j,k} : = \frac{\delta m}{\sin(m \tau)},
\]
which serves as a branching point for a branch of non-stationary solutions to \eqref{eq:NDexample} with symmetries at least $({}^{m}H)$ for each orbit type of maximal kind $(H) \in \mathfrak M_{1,j}^{[2 \nmid n]}$. 
\end{proposition}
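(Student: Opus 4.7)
The plan is to apply the $\bm H$-fixed version of Corollary \ref{cor:main_local_bif} at each critical point $(\alpha_{m,n,j,k}, \beta_{m,n,j,k}, 0)$. Working in the $\bm H$-fixed setting is convenient because $\mathscr H^{\bm H}$ has no $S^1$-trivial sector, so Condition \ref{b1} is vacuously satisfied and non-stationarity is built into the ambient space.

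First, substituting the factorization $\zeta_{j,k}(\alpha) = \zeta(\alpha)(z_{j,k}+1)$ from \eqref{def:zeta_jk_exampleND} into Remark \ref{rm:critical_values} yields $\mu_{m,n,j,k}(\alpha_{m,n,j,k}, \beta_{m,n,j,k}) = 0$, so the pair lies in $\Lambda^{\bm H}$ (because $m$ is odd), and Lemma \ref{lemm:isolated_cpts_example3D} delivers isolation. Second, from \eqref{eq:rho_mnj_formula_exampleND} and the same factorization, every coefficient in the $m$-folding sector takes the form
\[
\rho_{m,n',j',k'}(\alpha_{m,n,j,k}, \beta_{m,n,j,k}) = \operatorname{sign}\bigl(-\partial_\alpha \zeta(\alpha_{m,n,j,k})\sin(m\tau)\bigr),
\]
which is \emph{independent} of $(n',j',k') \in \Sigma_m(\alpha_{m,n,j,k}, \beta_{m,n,j,k})$ since $z_{j',k'}+1 > 0$. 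Conditions \ref{d1} and \ref{d2} guarantee that this common sign is nonzero, so the product hypothesis of Corollary \ref{cor:main_local_bif} (equivalently, the uniform crossing direction of Remark \ref{rm:crossing_sign}) is automatically fulfilled.

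Finally, fix any $(H) \in \mathfrak M_{1,j}^{[2 \nmid n]}$. The triple $(n,j,k) \in \Sigma_m^{\bm H}(\alpha_{m,n,j,k}, \beta_{m,n,j,k})$ has $\rho_{m,n,j,k} \neq 0$ and meets the required maximal-orbit-type condition, so the $\bm H$-fixed analogue of Corollary \ref{cor:main_local_bif} produces a branch $\mathcal C$ of non-trivial solutions to \eqref{eq:H_fixed_operator_equation} bifurcating from $(\alpha_{m,n,j,k}, \beta_{m,n,j,k}, 0)$ with symmetries at least $({}^m H)$; because every element of $\mathscr H^{\bm H}$ is non-stationary, $\mathcal C$ is in particular a branch of non-stationary solutions to \eqref{eq:NDexample}. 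The main subtlety lies in the $\bm H$-fixed adaptation of Corollary \ref{cor:main_local_bif}: the coefficient-extraction argument from the proof of Theorem \ref{thm:main_local_bif} carries over verbatim with $\omega_G^{\bm H}$ and $\Sigma_0^{\bm H}$ replacing $\omega_G$ and $\Sigma_0$, and Lemmas \ref{lemm:distjoint_maxorbtyp_sets} and \ref{lemm:basicdegree_maxorbtyps_coeff} apply without modification.
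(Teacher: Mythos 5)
Your proof is correct and takes essentially the same route as the paper: verify that $(\alpha_{m,n,j,k},\beta_{m,n,j,k},0)$ is an isolated $\bm H$-fixed critical point, observe via $\zeta_{j,k}=\zeta\cdot(z_{j,k}+1)$ and formula \eqref{eq:rho_mnj_formula_exampleND} that every $\rho_{m,n',j',k'}$ in the $m$-folding sector carries the same nonzero sign, and invoke Corollary \ref{cor:main_local_bif}. Where you differ is in being more scrupulous than the paper's one-line proof: the paper cites Corollary \ref{cor:main_local_bif} verbatim even though its stated hypotheses include \ref{b1} (which need not hold at these critical parameters) and even though the conclusion asserts non-stationarity (which the plain corollary cannot deliver); you instead pass explicitly to the $\bm H$-fixed setting, where \ref{b1} is vacuous and non-stationarity is automatic, and note that Theorem \ref{thm:main_local_bif}'s coefficient-extraction argument carries over with $\omega_G^{\bm H}$ and $\Sigma_0^{\bm H}$ in place of $\omega_G$ and $\Sigma_0$. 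This is exactly the reduction the paper flags in Section \ref{sec:res_rab_alt} as the way to circumvent $\tilde B_1$, so your account is a faithful and somewhat more careful rendition of what the authors intend. One small caveat you share with the paper: concluding $\partial_\alpha\zeta(\alpha_0)\neq0$ from ``differentiable and strictly monotonic'' is not literally valid (e.g.\ $\zeta(\alpha)=\alpha^3$), but this stronger reading of \ref{d1} is already implicit in the paper's use of the inverse function theorem in Lemma \ref{lemm:isolated_cpts_example3D} and in the Jacobian nondegeneracy of \eqref{eq:Jacobian_mu_mnj}, so it is not a defect of your argument relative to the paper's.
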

\begin{proof}
Since the values $\partial_\alpha \zeta_{j,k}(\alpha)$ are of a fixed sign for all $j \in \{0,1,\ldots,r\}$, $k \in \{1,2,\ldots,m_j\}$ and $\alpha \in \br$, the result follows from formula \eqref{eq:rho_mnj_formula_exampleND} together with Corollary \ref{cor:main_local_bif}.
\end{proof}
\vs
Finally, let's apply Theorem \ref{thm:main_global_bif} in order to establish the global properties of the branches of non-stationary solutions predicted by Proposition \eqref{prop:NDexample_local_bifurcation}.
\begin{proposition} \label{prop:NDexample_global_bifurcation}
Under conditions \ref{d1}--\ref{d2}, every branch of non-trivial solutions to \eqref{eq:NDexample} emerging from a critical point $(\alpha_0,\beta_0,0) \in \Lambda$ satisfying 
\[
\alpha_{0} := \zeta_{j,k}^{-1}(\nu^2 m^2 - n^2 - \delta m \cot(m \tau)) \text{ and } \beta_{0} : = \frac{\delta m}{\sin(m \tau)},
\]
for some $(m,n,j,k) \in 2 \bn - 1 \times \bn \times \{0,\ldots,r\} \times \{1,\ldots, m_j\}$ consists only of non-stationary solutions and is also unbounded.
\end{proposition}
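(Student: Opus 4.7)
The plan is to invoke the $\bm H$-fixed Rabinowitz alternative (Theorem \ref{thm:H_fixed_rab_alt}) branch-by-branch, exploiting the sign-uniformity of the crossing numbers recorded in \eqref{eq:rho_mnj_formula_exampleND} to rule out the finite alternative. The non-stationary half of the claim is essentially automatic: any branch $\mathcal C$ detected by Proposition \ref{prop:NDexample_local_bifurcation} lies inside $\mathscr S^{\bm H}$, and by Remark \ref{rm:non_stationary_branches} every connected component of $\mathscr S^{\bm H}$ consists exclusively of non-stationary functions.

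For unboundedness I would fix an arbitrary branch $\mathcal C$ bifurcating from the given critical point $(\alpha_0,\beta_0,0) \in \Lambda^{\bm H}$ associated with an odd-$m$ quadruple $(m,n,j,k)$, and argue by contradiction. If $\mathcal C$ were bounded, one could surround it with an open bounded $G$-invariant $\mathcal U$ whose boundary misses $\Lambda^{\bm H}$; alternative (a) of Theorem \ref{thm:H_fixed_rab_alt} would fail by construction, and alternative (b) would yield a finite list $\overline{\mathcal C} \cap \Lambda^{\bm H} = \{(\lambda_0,0),\dots,(\lambda_{n_0},0)\}$ with
\[
\sum_{i=0}^{n_0} \omega_G^{\bm H}(\lambda_i) = 0.
\]
I would then exhibit an orbit type whose coefficient in this sum is strictly nonzero, producing the desired contradiction.

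Specifically, set $s := m$ and pick an orbit type of maximal kind $(H) \in \mathfrak M_{1,j}^{[2\nmid n]}$. Following the bookkeeping used in the proof of Theorem \ref{thm:main_global_bif} (based on Lemma \ref{lemm:distjoint_maxorbtyp_sets}, which annihilates the contributions of all $m' \neq s$ indices to the $({}^{s}H)$-coefficient), one arrives at
\[
\operatorname{coeff}^{{}^{s}H}\!\Bigl(\sum_{i=0}^{n_0} \omega_G^{\bm H}(\lambda_i) \Bigr) = \sum_{i=0}^{n_0} \sum_{(n',j',k') \in \Sigma_s(\alpha_i,\beta_i)} \rho_{s,n',j',k'}(\alpha_i,\beta_i)\, \operatorname{coeff}^{{}^{s}H}(\deg_{\mathcal V_{s,j'}^{[2\nmid n']}}).
\]
Each coefficient $\operatorname{coeff}^{{}^{s}H}(\deg_{\mathcal V_{s,j'}^{[2\nmid n']}})$ is non-negative by Lemma \ref{lemm:basicdegree_maxorbtyps_coeff}, and the $i = 0$ term carries the pair $(n,j,k) \in \Sigma_s(\alpha_0,\beta_0)$ with $(H) \in \mathfrak M_{1,j}^{[2\nmid n]}$, for which this coefficient is strictly positive.

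The last ingredient is the sign-uniformity of the factor $\rho_{s,n',j',k'}(\alpha_i,\beta_i)$. From \eqref{eq:rho_mnj_formula_exampleND} and the factorization $\zeta_{j',k'}(\alpha) = \zeta(\alpha)(z_{j',k'}+1)$ with $z_{j',k'}+1 > 0$, one reads off
\[
\rho_{s,n',j',k'}(\alpha_i,\beta_i) = -\operatorname{sign}\!\bigl(\partial_\alpha \zeta(\alpha_i)\bigr)\cdot \operatorname{sign}(\sin(s\tau)).
\]
Under \ref{d1} the first factor is independent of $i$, and under \ref{d2} with $s$ odd the second factor is a fixed nonzero sign; hence every nonzero contribution to the double sum carries the same sign. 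Combined with the strictly positive $i=0$ contribution, the total sum cannot vanish, and the contradiction forces $\mathcal C$ to meet the boundary of every admissible $\mathcal U$, which is exactly unboundedness. The technical obstacle I most anticipate is confirming that only the $m' = s$ indices contribute to the $({}^{s}H)$-coefficient, but this is precisely the bookkeeping performed in the proof of Theorem \ref{thm:main_global_bif} and should transfer essentially verbatim.
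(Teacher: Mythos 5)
Your proof is correct and follows the same logical path the paper takes; the only difference is that the paper collapses the argument into a single appeal to Corollary~\ref{cor:main_global_bif} (together with formula~\eqref{eq:rho_mnj_formula_exampleND} and the monotonicity of $\zeta$), while you unwind that corollary back to Theorem~\ref{thm:H_fixed_rab_alt} and carry out the sign bookkeeping explicitly. One small citation nit: Lemma~\ref{lemm:basicdegree_maxorbtyps_coeff} as stated gives strict positivity of $\operatorname{coeff}^{{}^{s}H}(\deg_{\mathcal V_{s,j'}^{[2\nmid n']}})$ only when $({}^{s}H)\in\mathfrak M_{s,j'}^{[2\nmid n']}$; the non-negativity you need for the remaining $(n',j',k')$ is not literally the lemma's conclusion but follows from the same maximality-plus-recurrence-formula reasoning used in its proof (no $(L)>({}^{s}H)$ contributes, so $n_{{}^sH}=\tfrac{1}{2}\dim\mathcal V^{{}^sH}/|W({}^sH)/S^1|\ge 0$). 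With that remark the argument is complete and faithful to the paper's.
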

\begin{proof}
Again, the result follows from the fact that all the values $\partial_\alpha \zeta_{j,k}(\alpha)$ are of a fixed sign, together with formula \eqref{eq:rho_mnj_formula_exampleND} and Corollary \ref{cor:main_global_bif}.
\end{proof}
\begin{remark} \rm
  Theorem \ref{thm:intro_arbitrary_Gamma} follows as a direct corollary of the Propositions \ref{prop:NDexample_local_bifurcation} and \ref{prop:NDexample_global_bifurcation}.   
\end{remark}
%%%%%%%%%%%%%%%%%%%%%%%%%%
%%%%%%%%%%%%%%%%%%%%%%%%%%
%%%%%%%%%%%%%%%%%%%%%%%%%%
\section{The Special Case of Dihedral Symmetries} \label{sec:example_2}
\epigraph{Of a musical string, of given length and weight, stretched by a given weight, to
find its vibrations.}{Bernoulli (cf. \cite{Struik})}
%%%%%%%%%%%%%%%%%%%%%%%%%%%%%%%%
%%%%%%%%%%%%%%%%%%%%%%%%%%%%%%%%
%\begin{figure}[H]
%\begin{center}
%\vskip3cm
%\rput(0,0){\psline(2.6,-1.5)(2.6,1.5)}
%\rput{60}(0,0){\psline(2.6,-1.5)(2.6,1.5)}
%\rput{120}(0,0){\psline(2.6,-1.5)(2.6,1.5)}
%\rput{180}(0,0){\psline(2.6,-1.5)(2.6,1.5)}
%\rput{-60}(0,0){\psline[linestyle=dashed](2.6,-1.5)(2.6,1.5)}
%\rput{-120}(0,0){\psline(2.6,-1.5)(2.6,1.5)}
%\rput{60}(0,0){\rput(0,3){\psdots[linewidth=14pt](0,0)\rput{-60}(0,0){\scriptsize \white $u_2$}}}
%\rput{0}(0,0){\rput(0,3){\psdots[linewidth=14pt](0,0)\rput(0,0){\scriptsize \white $u_1$}}}
%\rput{120}(0,0){\rput(0,3){\psdots[linewidth=14pt](0,0)\rput{-120}(0,0){\scriptsize \white $u_3$}}}
%\rput{-60}(0,0){\rput(0,3){\psdots[linewidth=14pt](0,0)\rput{60}(0,0){\scriptsize \white $u_{N}$}}}
%\rput{-120}(0,0){\rput(0,3){\psdots[linewidth=14pt](0,0)\rput{120}(0,0){\scriptsize \white $u_{N-1}$}}}
%\rput{180}(0,0){\rput(0,3){\psdots[linewidth=14pt](0,0)\rput{180}(0,0){\scriptsize \white $u_4$}}}
%\end{center}
%\vskip3cm
%\caption{Cycle of $N$ Vibrating Strings with $\Gamma=D_{N}$-symmetries}\label{fig:chain}
\begin{figure}
    \centering
\includegraphics[width=0.5\linewidth]{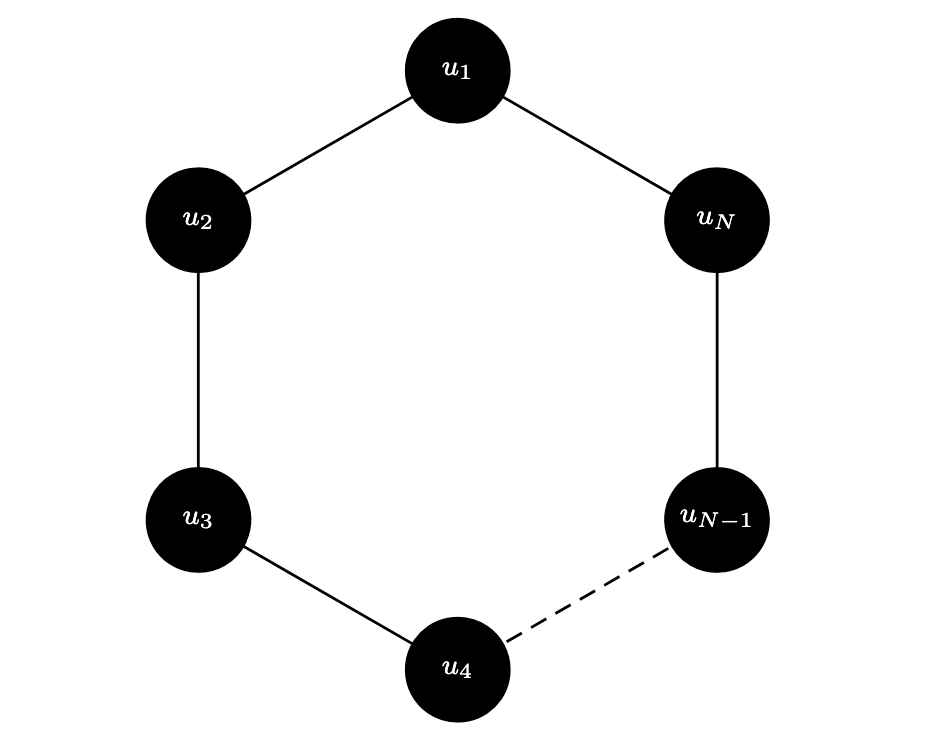}
    \caption{Cycle of $N$ Vibrating Strings with $\Gamma=D_{N}$ symmetries}
    \label{fig:dihedral_graph}
\end{figure}
%%%%%%%%%%%%%%%%%%%%%%%%%%%%%%%%
%%%%%%%%%%%%%%%%%%%%%%%%%%%%%%%%
In this section, we are able to offer a more definitive diagnosis on the symmetries expressed by the branches of non-stationary solutions to \eqref{eq:NDexample} emerging from its critical points after prescribing an exact coupling relation between strings in our system. Specifically, let $\Gamma$ be the dihedral group of order $2N$ such that our full symmetry group becomes
\[
G := S^1 \times \bz_2 \times \bz_2 \times D_N,
\]
and consider the natural permutation representation $\rho_V: \Gamma \rightarrow GL(V)$ on $V$ defined for any $\sigma \in \Gamma$ and $u \in V$ by
\begin{align} 
\rho_V(\sigma)(u_1,u_2,\ldots,u_N) := (u_{\sigma(1)},u_{\sigma(2)},\ldots,u_{\sigma(N)}). 
\end{align}
In particular, the rotation and reflection generators $\gamma, \kappa \in \Gamma$ are the permutations of the indices $i \in \{1,\ldots,N\}$ given by 
\[
\gamma(i):= i+1 \Mod{N} \text{ and } \kappa(i) := N-i \Mod{N},
\]
and with the characters 
\begin{align*}
    \chi_V(\gamma) = 0 \; \text{  and  } \; \chi_V(\kappa) = 
    \begin{cases}
        1 & \text{ if } N \text{ is  odd}; \\
        0 & \text{ if } N  \text{ is  even}.
    \end{cases}
\end{align*}
The number and character of the irreducible $\Gamma$ representations also depend on the dihedral order $N$: there are always the {\it trivial representation} $\mathcal V_0 \simeq \br$, on which $\Gamma$ acts trivially, $\lfloor \frac{N+1}{2} \rfloor - 1$ {\it geometric representations} $\mathcal V_j$, each with an action induced by the corresponding matrix representation $\rho_j: \Gamma \rightarrow GL(\bc)$
    \[
\rho_j(\kappa) := \begin{pmatrix}
    1&0 \\
    0&-1
\end{pmatrix}, \quad \rho_j(\gamma):=\begin{pmatrix}
    \cos(\frac{2j\pi}{N}) & -\sin(\frac{2j\pi}{N}) \\
    \sin(\frac{2j\pi}{N}) & \cos(\frac{2j\pi}{N})
\end{pmatrix}, \quad 1 \leq j < \lfloor \frac{N+1}{2} \rfloor,
\]
the {\it sign representation} $\mathcal V_{*} \simeq \br$, with the action
\[
\sigma \cdot v := \sign(\sigma) v, \quad v \in \mathcal V_{*},
\]
and, in the case that $N$ is even, two additional irreducible one-dimensional representations $\mathcal V_{\lfloor \frac{N+1}{2} \rfloor}, \mathcal V_{**} \simeq \br$ equipped, respectively, with the actions
\[
\sigma \cdot v := - \sign(\sigma) v, \quad v \in \mathcal V_{\lfloor \frac{N+1}{2} \rfloor},
\]
and
\[
\rho_{**}(\kappa) = \rho_{**}(\gamma) = -1.
\]
Comparing characters for the irreducible representations of $\Gamma$ with the character of $V$
\begin{table}[h]
\centering
\begin{tabular}{|c|ccccc|}
\hline
conjugacy classes &$e_\Gamma$ & \; & $\kappa$ & \; &$\gamma$\\\hline 
$\chi_0$ &$1$ & \; & $1$ & \; & $1$ \\
$\chi_1$ &$2$ & \; & $0$ & \; & $\cos(\frac{2 \pi}{N})+\cos(\frac{2 \pi}{N})$\\
$\vdots$ & $\vdots$ & \;& $\vdots$ & \; & $\vdots$ \\
$\chi_j$ &$2$ & \; & $0$ & \; & $\cos(\frac{2 j \pi}{N})+\cos(\frac{2 j\pi}{N})$\\
$\vdots$ & $\vdots$ & \;& $\vdots$ & \; & $\vdots$ \\
$\chi_{\lfloor \frac{N+1}{2} \rfloor}$ &$1$ & \; & $-1$ & \; & $1$\\
$\chi_{*}$ &$1$ & \; & $1$ & \; & $-1$\\
$\chi_{**}$ &$1$ & \; & $-1$ & \; & $-1$\\
\hline 
$\chi_V$ &$N$ & \; & $\frac{1}{2}(1-(-1)^N)$ & \; & $0$ \\
\hline 
\end{tabular}
\vs
\caption{Character Table for $D_N$.}
\end{table} \\
one obtains the relation
\begin{align}
    \chi_V = 
    \begin{cases}
        \chi_0 + \chi_1 + \cdots + \chi_{\frac{N+1}{2} - 1} &  \text{if } 2 \nmid N; \\
        \chi_0 + \chi_1 + \cdots + \chi_{\frac{N}{2}-1} + \chi_{\frac{N}{2}} & \text{if } 2 \mid N,
    \end{cases}
\end{align}
implying that $V$ has the $\Gamma$-isotypic decomposition 
\begin{align}
    V = \begin{cases}
        V_0 \oplus V_1 \oplus \cdots \oplus V_{\frac{N+1}{2} - 1} & \text{if } 2 \nmid N; \\
       V_0 \oplus V_1 \oplus \cdots \oplus V_{\frac{N}{2}-1} \oplus V_{\frac{N}{2}} & \text{if } 2 \mid N.
    \end{cases}
\end{align}
For the sake of generality, we adopt the following notation for the set of $\Gamma$-isotypic indices relevant to the $\Gamma$-isotypic decomposition of $V$
\[
\mathfrak J(N) := \begin{cases}
    \{0,1,\ldots, \frac{N+1}{2} - 1\} &  \text{if } 2 \nmid N; \\
    \{0,1,\ldots, \frac{N}{2} - 1, \frac{N}{2}\} &  \text{if } 2 \mid N.
\end{cases}
\]
The graph Laplacian associated with the undirected graph $\mathbb G$ with $N$ vertices invariant under the permutation action of $D_N$ has the form
\[
L=\left(
\begin{array}
[c]{ccccccc}%
-2 & 1 & 0 & \dots & 0 & 0 & 1\\
1 & -2 & 1 & \dots & 0 & 0 & 0\\
0 & 1 & -2 & \dots & 0 & 0 & 0\\
\vdots & \vdots & \vdots & \ddots & \vdots & \vdots & \vdots\\
0 & 0 & 0 & \dots & -2 & 1 & 0\\
0 & 0 & 0 & \dots & 1 & -2 & 1\\
1 & 0 & 0 & \dots & 0 & 1 & -2
\end{array}
\right).
\]
One can verify that such a matrix has the eigenvectors
\[
v_j := (1,\gamma^j,\gamma^{2j},\ldots,\gamma^{(N-1)j})^T, \; j \in \{0,1,\ldots,\lfloor \frac{N+1}{2} \rfloor - 1\},
\]
where $\gamma := e^{\frac{2\pi i}{N}}$, corresponding to the eigenvalues
\[
-z_j^2 := - 2 + \gamma^j + \gamma^{-j} = -4 \sin^2 \left( \frac{\pi j}{N} \right).
\]
And, in the case that $N$ is even, one will find that there is also the eigenpair
\[
v_{\frac{N}{2}} := (1,-1,1\ldots, -1)^T, \quad z_{\frac{N}{2}} = -4.
\]
Since each isotypic component of $V$ has simple isotypic multiplicity, i.e. since
\[
m_j = 1, \quad j \in \mathfrak J(N),
\]
the eigenvalues for the family of $\Gamma$-equivariant matrices $\zeta (L + \id): \br \rightarrow L^\Gamma(V)$ are specified with a single index (rather than an index pair, cf. \eqref{eq:Gamma_Z2Z2_block_matrix_decomp})
\begin{align} \label{def:zeta_j_example}
    \zeta_j(\alpha):= \zeta(\alpha) (z_j + 1), \quad j \in \mathfrak J(N),
\end{align}
and the eigenvalues for the family of $G$-equivariant linear operators $\mathscr A: \br \times \br \rightarrow L^G(\mathscr H)$ are specified with an index triple (rather than an index quadruple, cf. \eqref{def:eigenvalues_Anmjk})
\begin{align} \label{def:mu_mnj_example}
\mu_{m,n,j}(\alpha,\beta) := \frac{-\nu^2 m^2 + n^2 + i \delta m + \zeta_j(\alpha) + \beta e^{-im \tau}}{\xi_{m,n}}, \quad (m,n,j) \in \bn \cup \{0\} \times \bn \times \mathfrak J(N).
\end{align}
For compatibility with the setting of simple isotypic multiplicities, we can redefine the index set \eqref{def:index_set_full} by
\begin{align} \label{def:index_set_full_example}
    \Sigma := \{ (m,n,j) : m \in \bn \cup \{0\}, \; n \in \bn, \; j \in \mathfrak J(N) \},
\end{align}
and also the index sets \eqref{def:index_set_null} and \eqref{def:H_fixed_index_set}, respectively, by
\begin{align} \label{def:index_set_null_example}
    \Sigma_0 (\alpha,\beta) := \{ (m,n,j) \in \Sigma :  m > 0, \; \mu_{m,n,j}(\alpha,\beta) = 0\},
\end{align}
and
\begin{align} \label{def:H_fixed_index_set_example3D}
    \Sigma_0^{\bm H} (\alpha,\beta) := \{ (m,n,j) \in \Sigma_0 (\alpha,\beta) : 2 \nmid m \}.
\end{align}
A natural first candidate for function $\zeta: \mathbb{R} \to \mathbb{R}$, which describes the dependence of coupling intensity on the bifurcation parameter $\alpha \in \mathbb{R}$ might be $\zeta(\alpha) = \alpha$, implying an unbounded growth of coupling strength. However, such behavior is often not physically realistic as many systems exhibit saturation limits where coupling strength stabilizes beyond a certain range. A more reasonable candidate is the sigmoid function 
\begin{align}\label{eq:sigmoid_function}
\zeta(\alpha) := \frac{1}{1 + e^{-\alpha}}.
\end{align}
\begin{figure}[htbp]
    \centering \includegraphics[width=.90\textwidth]{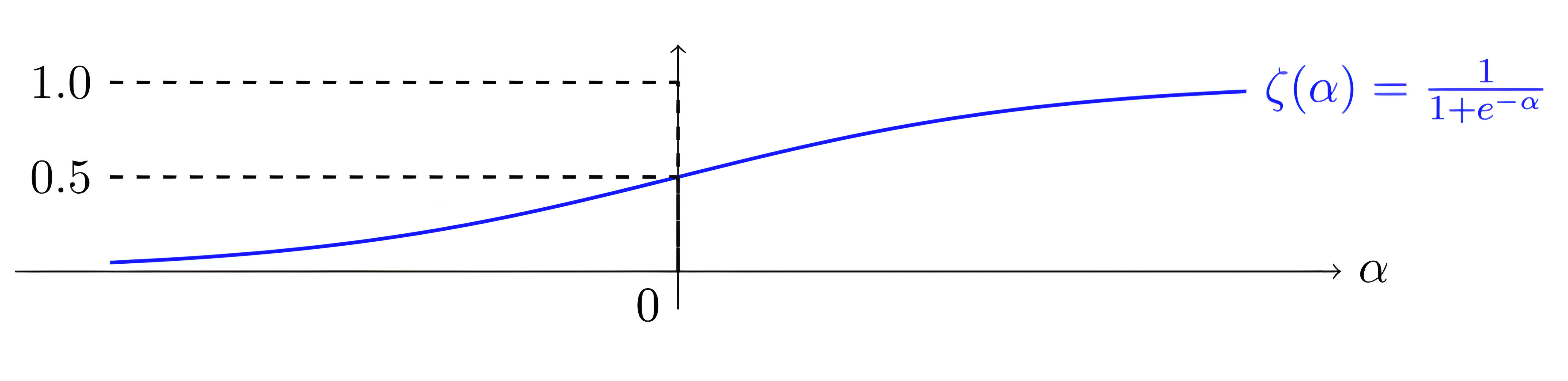}
    \caption{Graph of the sigmoid function $\zeta(\alpha)$.}
    \label{Fig-2}
\end{figure}
\begin{remark} \rm
The sigmoid function introduces a nonlinear dependence of the coupling in \eqref{eq:system}
on the bifurcation parameter $\alpha \in \mathbb{R}$ and its asymptotic behavior suggests the imposition of {\it coupling strength saturation limits,} which might be understood as the physical constraints of our system.
\end{remark}
In order to leverage our specification of the coupling group $\Gamma$ for predicting the symmetric properties of the branches of solutions emerging from the critical points to our system, we must explicitly describe the orbit types belonging to the sets $\mathfrak M_{m,j}^{[2 \nmid n]}$, i.e. the maximal elements of the orbit type lattices $\Phi_1^t(G; \mathscr H_{m,j}^{[2 \nmid n]} \setminus \{0\})$ for every $m > 0$, $j \in \{0,1,\ldots,r\}$, $n \in \bn$ and for both even and odd dihedral orders $N$. Since the invariant theory for $S^1 \times D_N$ is already described by Golubitsky in \cite{Gol}, we only need to consider the impact of an extra two copies of $\bz_2$ on the orbit types of maximal kind and modify his notation to fit our framework. 
\vs
There is a class of subgroup, which we denote by $H_{m,j}^{[2 \nmid n]} \leq S^1 \times \bz_2 \times \bz_2 \times D_N$, with a conjugacy class that is maximal in all orbit type lattices $\Phi_1^t(G; \mathscr H_{m,j}^{[2 \nmid n]} \setminus \{0\})$ with $m > 0$, $j \in \{0,1,\ldots,r\}$, $n \in \bn$ and for any dihedral parity (except in the case of $j = \frac{N}{2}$ where $N$ must be even), generated by the elements
\begin{align*}
    \begin{cases}
%\{ (e^{\frac{\pi}{m}},-1,1,1), (1, 1,-1,1), (1, 1,1,\gamma), (1, 1,1,\kappa) \} & \text{ if } j = 0 \text{ and } n \text{ is  even}; \\
\{(e^{\frac{\pi}{m}},-1,1,1), (1,(-1)^n,-1,1),  (1, 1,1,\gamma), (1, 1,1,\kappa) \} & \text{ if } j = 0; \\
%\{(e^{\frac{\pi}{m}},-1,1,1), (1,1,-1,1), (e^{\frac{\pi}{m}}, 1,1,\gamma)\} & \text{ if } j = \lfloor \frac{N+1}{2} \rfloor +1 \text{ and } n \text{ is  even}; \\
\{(e^{\frac{\pi}{m}},-1,1,1), (1,(-1)^n,-1,1),  (e^{\frac{\pi}{m}}, 1,1,\gamma)\} & \text{ if } j = \frac{N}{2}; \\
%\{(e^{\frac{\pi}{m}},-1,1,1), (1,1,-1,1),  (e^{\frac{-2\pi}{N}j}, 1,1,\gamma)\}, & \text{ if } 1 \leq j \leq \lfloor \frac{N+1}{2} \rfloor \text{ and } n \text{ is  even}; \\
\{(e^{\frac{\pi}{m}},-1,1,1), (1,(-1)^n,-1,1), (e^{\frac{-2\pi}{N} j}, 1,1,\gamma)\}, & \text{ if } 0 < j < \lfloor \frac{N+1}{2} \rfloor.
    \end{cases}
\end{align*}
Let's examine the possible generators of these subgroups and their implications for the functions $u \in \mathscr H$ with $(G_u) \geq (H_{m,j}^{[2 \nmid n]})$. The generator $(e^{\frac{\pi}{m}},-1,1,1)$ implies $\frac{\pi}{m}$-anti-periodicity and the generators $(1,1,-1,1), (1,-1,-1,1)$ imply, respectively,  evenness and oddness, such that any function $u \in \mathscr H$ with $(G_u) \geq (H_{m,j}^{[2 \nmid n]})$ satisfies the symmetry relations
\[
u(t + \frac{\pi}{m},x) = -u(t,x) \text{ and } u(-t,x) = (-1)^n u(t,x), \quad \forall_{(t,x) \in \Om}.
\]
On the other hand, presence of the generators $(1, 1,1,\gamma), (1, 1,1,\kappa)$ implies the subgroup inclusion $\{1\} \times \{1\} \times \{1\} \times D_N \leq H^{[2 \nmid n]}_{m,0}$ which, in turn, indicates total permutation invariance such that, for any $u \in \mathscr H$ with $(G_u) \geq (H^{[2 \nmid n]}_{m,0})$, one has 
\[
 \sigma u(t,x) = u(t,x), \quad \forall_{\sigma \in \Gamma}, \; \forall_{(t,x) \in \Om}.
\]
Finally, the generators $(e^{\frac{\pi}{m}}, 1,1,\gamma)$, $(e^{\frac{-2\pi}{\tilde N} j}, 1,1,\gamma)$ imply the traveling wave symmetries (cf. Figure \ref{fig:eigenfunctions})
\[
u_{i}(t+  \frac{\pi}{m},x) = u_{i+1\Mod{N}}(t,x), \quad \forall_{(t,x) \in \Om}, \; i \in \{1,\ldots, N\}, 
\]
for all $u \in \mathscr H$ with $(G_u) \geq (H^{[2 \nmid n]}_{m,\frac{N}{2}})$ and
\[
u_{i}(t - \frac{-2\pi}{ N} j,x) = u_{i+1\Mod{N}}(t,x), \quad \forall_{(t,x) \in \Om}, \; i \in \{1,\ldots, N\}, 
\]
for all $u \in \mathscr H$ with $(G_u) \geq (H^{[2 \nmid n]}_{m,j})$ where $0 < j < \lfloor \frac{N+1}{2} \rfloor$. 
\vs
In order to provide descriptions for the remaining orbit types of maximal kind, some additional notations must be introduced. Given any dihedral order $N$ and $\Gamma$-isotypic index $0 < j < \lfloor \frac{N+1}{2} \rfloor$, we put
\[
\tilde N := \frac{N}{\gcd(N,j)}, \quad \tilde j:= \frac{j}{\gcd(N,j)}, 
\]
and we denote by $h$ the modular inverse of $\tilde j$ with respect to $\tilde N$, i.e. 
\[
h \tilde j = 1 \Mod{\tilde{N}}.
\]
Now, for the cases with $0 < j  < \lfloor \frac{N+1}{2} \rfloor$, the lattices $\Phi_1^t(G; \mathscr H_{m,j}^{[2 \nmid n]}\setminus \{0\}) $ admit two additional 
maximal elements, arising as the conjugacy classes associated with two types of isotropy subgroup, which we denote by $S_{m,j}^{[2 \nmid n]}$ and $T_{m,j}^{[2 \nmid n]}$. The first of these subgroups $S_{m,j}^{[2 \nmid n]} \leq S^1 \times \bz_2 \times \bz_2 \times D_N$ is generated by the elements
\begin{align*}
    \begin{cases}
\{(e^{\frac{\pi}{m}},-1,1,1), (1,(-1)^n,-1,1),  (1,1,1,\kappa), (1, 1,1,\gamma^{\tilde N}) \} & \text{ if } \tilde N \text{ is odd}; \\
\{(e^{\frac{\pi}{m}},-1,1,1), (1,(-1)^n,-1,1), (1,1,1,\kappa), (e^{\frac{\pi}{m}}, 1,1,\gamma^{\frac{\tilde N}{2} h})\} & \text{ if } \tilde N \text{ is even},
    \end{cases}
\end{align*}
while the second $T_{m,j}^{[2 \nmid n]} \leq S^1 \times \bz_2 \times \bz_2 \times D_N$ has the generators
\begin{align*}
    \begin{cases}
\{(e^{\frac{\pi}{m}},-1,1,1), (1,(-1)^n,-1,1), (e^{\frac{\pi}{m}},1,1, \kappa), (1, 1,1,\gamma^{\tilde N})\} & \text{ if } \tilde N \text{ is odd}; \\
\{(e^{\frac{\pi}{m}},-1,1,1), (1,(-1)^n,-1,1), (1,1,1,\kappa \gamma^h), (e^{\frac{\pi}{m}}, 1,1,\gamma^{\frac{\tilde N}{2} h})\}  & \text{ if } \tilde N \text{ is even}.
    \end{cases}
\end{align*}
Let's examine the the generators of $S_{m,j}^{[2 \nmid n]}$ and $T_{m,j}^{[2 \nmid n]}$ which do not appear in the subgroup $H_{m,j}^{[2 \nmid n]}$. First, the generators $(1,1,1,\kappa)$, $(1,1,1,\kappa \gamma^h)$ and $(e^{\frac{\pi}{m}},1,1, \kappa)$ imply the permutation invariance 
\[
\kappa u(t,x) = u(t,x), \quad \forall_{(t,x) \in \Om},
\]
for all $u \in \mathscr H$ with $(G_u) \geq (S_{m,j}^{[2 \nmid n]})$ the invariance
\[
\kappa \gamma^h u(t,x) = u(t,x), \quad \forall_{(t,x) \in \Om},
\]
for all $u \in \mathscr H$ with $(G_u) \geq (T_{m,j}^{[2 \nmid n]})$ where $\tilde N$ is even and
\[
u_i(t+\frac{\pi}{m},x) = u_{N-i \Mod{N}}(t,x), \quad \forall_{(t,x) \in \Om}, \; i \in \{1,\ldots, N\},
\]
for all $u \in \mathscr H$ with $(G_u) \geq (T_{m,j}^{[2 \nmid n]})$ where $\tilde N$ is odd. On the other hand, the generators $(e^{\frac{\pi}{m}},1,1,\gamma^{\frac{\tilde N}{2}h})$ and $(e^{\frac{\pi}{m}},1,1,\gamma^{\tilde N h})$ imply a third class of travelling wave (cf. Figure \ref{fig:eigenfunctions})
\[
u_{i}(t+  \frac{\pi}{m},x) = u_{i + \frac{\tilde N}{2}h \Mod{N}}(t,x), \quad \forall_{(t,x) \in \Om}, \; i \in \{1,\ldots, N\}, 
\]
for all $u \in \mathscr H$ with $(G_u) \geq (S_{m,j}^{[2 \nmid n]})$ or $(G_u) \geq (T_{m,j}^{[2 \nmid n]})$ where $\tilde N$ is even and 
\[
u_{i}(t+  \frac{\pi}{m},x) = u_{i + \tilde N h \Mod{N}}(t,x), \quad \forall_{(t,x) \in \Om}, \; i \in \{1,\ldots, N\}, 
\]
for all $u \in \mathscr H$ with $(G_u) \geq (S_{m,j}^{[2 \nmid n]})$ or $(G_u) \geq (T_{m,j}^{[2 \nmid n]})$ where $\tilde N$ is odd.
\vs
With generator descriptions for the maximal elements of the isotropy lattices 
$\Phi_1^t(G; \mathscr H_{m,j}^{[2 \nmid n]} \setminus \{0\})$ with $(m,n,j) \in \Sigma$ and $m > 0$ according to the $\Gamma$-isotypic index $j  \in \{0,1,\ldots,r\}$ and the parity of the dihedral order $N$, we can strengthen the claims made in Theorem \ref{thm:intro_arbitrary_Gamma}, as follows:
\begin{theorem} \label{thm:intro_dihedral} \rm
The trivial solution to \eqref{eq:system} with the assignments $\Gamma:= D_N$, $A(\alpha):= \zeta(\alpha)(L+\id)$, $\zeta: \br \rightarrow \br$ differentiable, strictly monotonic
and under the conditions $\nu \in \bq$, $\delta > 0$, $\tau \neq \pi \bq$, undergoes a global bifurcation at the critical parameter values (if they exist)
\[
\alpha_{m,n,j} := \zeta^{-1}\left(\frac{\nu^2 m^2 - n^2 - \delta m \cot(m \tau)}{z_{j}+1} \right), \quad \beta_{m,n,j} : = \frac{\delta m}{\sin(m \tau)},
\]
for every index triple $(m,n,j) \in 2 \bn - 1 \times \bn \times \{0,\ldots,r\}$. In particular, every trivial solution of the form
$(\alpha_{m,n,j}, \beta_{m,n,j},0)$ is a branching point for an unbounded branch of non-stationary solutions with symmetries at least $(H_{m,j}^{[2 \nmid n]})$ and, in the case that, $0 < j < \lfloor \frac{N+1}{2} \rfloor$, also a branching point for two additional unbounded branches of non-stationary solutions with symmetries at least $(T_{m,j}^{[2 \nmid n]})$ and $(S_{m,j}^{[2 \nmid n]})$.
\end{theorem}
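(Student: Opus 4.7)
The plan is to derive Theorem \ref{thm:intro_dihedral} as a specialization of Propositions \ref{prop:NDexample_local_bifurcation} and \ref{prop:NDexample_global_bifurcation} to the case $\Gamma = D_N$, combined with the explicit classification of orbit types of maximal kind in the relevant isotropy lattices outlined in the preceding pages. First I would verify that the hypotheses of these propositions hold in our setting: the differentiability and strict monotonicity of $\zeta$ yield Condition \ref{d1}, the assumption $\tau \notin \pi\bq$ is exactly \ref{d2}, and Conditions \ref{a0}--\ref{a4} were already observed to hold at the opening of Section \ref{sec:n_strings}, with the spectral decomposition of the graph Laplacian $L$ supplying \ref{a4} via \eqref{def:zeta_j_example}. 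Applied directly, Proposition \ref{prop:NDexample_local_bifurcation} identifies every parameter pair $(\alpha_{m,n,j},\beta_{m,n,j})$ with $(m,n,j) \in 2\bn - 1 \times \bn \times \{0,\ldots,r\}$ as a branching point, and Proposition \ref{prop:NDexample_global_bifurcation} promotes the resulting branches to unbounded branches of non-stationary solutions.

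The essential new content is the interpretation of the quantifier \emph{``for each orbit type of maximal kind $(H) \in \mathfrak M_{1,j}^{[2 \nmid n]}$''} in the dihedral setting, i.e.\ the enumeration of $\mathfrak M_{1,j}^{[2 \nmid n]}$ under $\Gamma = D_N$. The idea is to combine Golubitsky's classification in \cite{Gol} of the maximal isotropy subgroups of $S^1 \times D_N$ acting on $\mathcal W_1 \otimes \mathcal V_j$ --- which produces one maximal conjugacy class for $j \in \{0,\tfrac{N}{2}\}$ and three (of type $H$, $S$, $T$) for $0 < j < \lfloor \tfrac{N+1}{2} \rfloor$ --- with the lifting procedure described in Remark \ref{rm:maximal_kind_orbit_types} that enlarges $S^1 \times D_N$ to the full group $G = S^1 \times \bz_2 \times \bz_2 \times D_N$. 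The extra $\bz_2 \times \bz_2$ factor acts antipodally on $\mathcal V_j$ according to the parity $[2 \nmid n]$, so for any representative $u(t,x) = v_n(x)(\cos(mt)a + \sin(mt)b)$ the two generators $(e^{\pi/m},-1,1,1)$ and $(1,(-1)^n,-1,1)$ automatically belong to the isotropy; consequently every maximal orbit type in $\Phi_1^t(G;\mathscr H_{1,j}^{[2 \nmid n]}\setminus\{0\})$ is obtained by simply adjoining these two generators to one of Golubitsky's subgroups. Transferring to an arbitrary odd folding $m \in 2\bn-1$ is then immediate from the relation $\Psi_s(\mathfrak M_{1,j}^{[2 \nmid n]}) = \mathfrak M_{s,j}^{[2 \nmid n]}$, recovering exactly the subgroups $H_{m,j}^{[2 \nmid n]}$, $S_{m,j}^{[2 \nmid n]}$, $T_{m,j}^{[2 \nmid n]}$ written out in the statement.

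The main obstacle will be confirming that these three subgroups are genuinely maximal and that no further conjugacy classes arise --- in other words, that no proper enlargement of any of them inside $G$ remains the isotropy of some nonzero element of $\mathscr H_{m,j}^{[2 \nmid n]}$, and that the lift from $S^1 \times D_N$ to $G$ does not split a Golubitsky orbit type into several distinct $G$-orbit types. I would handle this by a direct invariance computation on representatives $u(t,x) = v_n(x)(\cos(mt)a + \sin(mt)b)$ with $a,b$ in the $D_N$-irreducible summand corresponding to $\mathcal V_j$, using the concrete $G$-action \eqref{def:isometric_G_action} together with the explicit matrix description of $\rho_j$ to show that any candidate extra generator in $G \setminus H_{m,j}^{[2 \nmid n]}$ (respectively $S_{m,j}^{[2 \nmid n]}$, $T_{m,j}^{[2 \nmid n]}$) forces $a = b = 0$; this is the genuinely case-dependent step, requiring separate attention to $j = 0$, $j = \tfrac{N}{2}$, and $0 < j < \lfloor \tfrac{N+1}{2} \rfloor$, as well as to the parity of $\tilde N = N/\gcd(N,j)$ in the last case.

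Once the set $\mathfrak M_{1,j}^{[2 \nmid n]}$ is pinned down, Corollary \ref{cor:main_global_bif} applied at each critical point with folding $s = m$ delivers the unbounded branch of non-stationary solutions with symmetries at least $({}^m H)$ for every $(H) \in \mathfrak M_{1,j}^{[2 \nmid n]}$; the sign hypothesis of the corollary is verified because \eqref{eq:rho_mnj_formula_exampleND}, combined with the strict monotonicity of $\zeta$, forces every nonzero coefficient $\rho_{m,n,j}(\alpha_{m,n,j},\beta_{m,n,j})$ relevant to the fixed folding $s = m$ to share the common sign $\operatorname{sign}\bigl(-\partial_\alpha \zeta(\alpha_{m,n,j}) \sin(m\tau)\bigr)$. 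This yields the single $(H_{m,j}^{[2 \nmid n]})$-branch in the cases $j \in \{0,\tfrac{N}{2}\}$ and the two additional branches with symmetries $(S_{m,j}^{[2 \nmid n]})$ and $(T_{m,j}^{[2 \nmid n]})$ in the remaining cases $0 < j < \lfloor \tfrac{N+1}{2} \rfloor$, completing the proof.
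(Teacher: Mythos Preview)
Your proposal is correct and follows essentially the same route as the paper: the paper does not supply a separate proof of Theorem \ref{thm:intro_dihedral} but presents it as a direct strengthening of Theorem \ref{thm:intro_arbitrary_Gamma} (equivalently, of Propositions \ref{prop:NDexample_local_bifurcation} and \ref{prop:NDexample_global_bifurcation}) once the maximal orbit types $H_{m,j}^{[2 \nmid n]}$, $S_{m,j}^{[2 \nmid n]}$, $T_{m,j}^{[2 \nmid n]}$ have been identified from Golubitsky's classification augmented by the extra $\bz_2 \times \bz_2$ generators. Your outline is in fact more detailed than the paper's treatment, explicitly flagging the verification of maximality and the non-splitting of orbit types under the lift to $G$, which the paper simply asserts.
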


\begin{figure}
    \centering
\includegraphics[width=.95\linewidth]{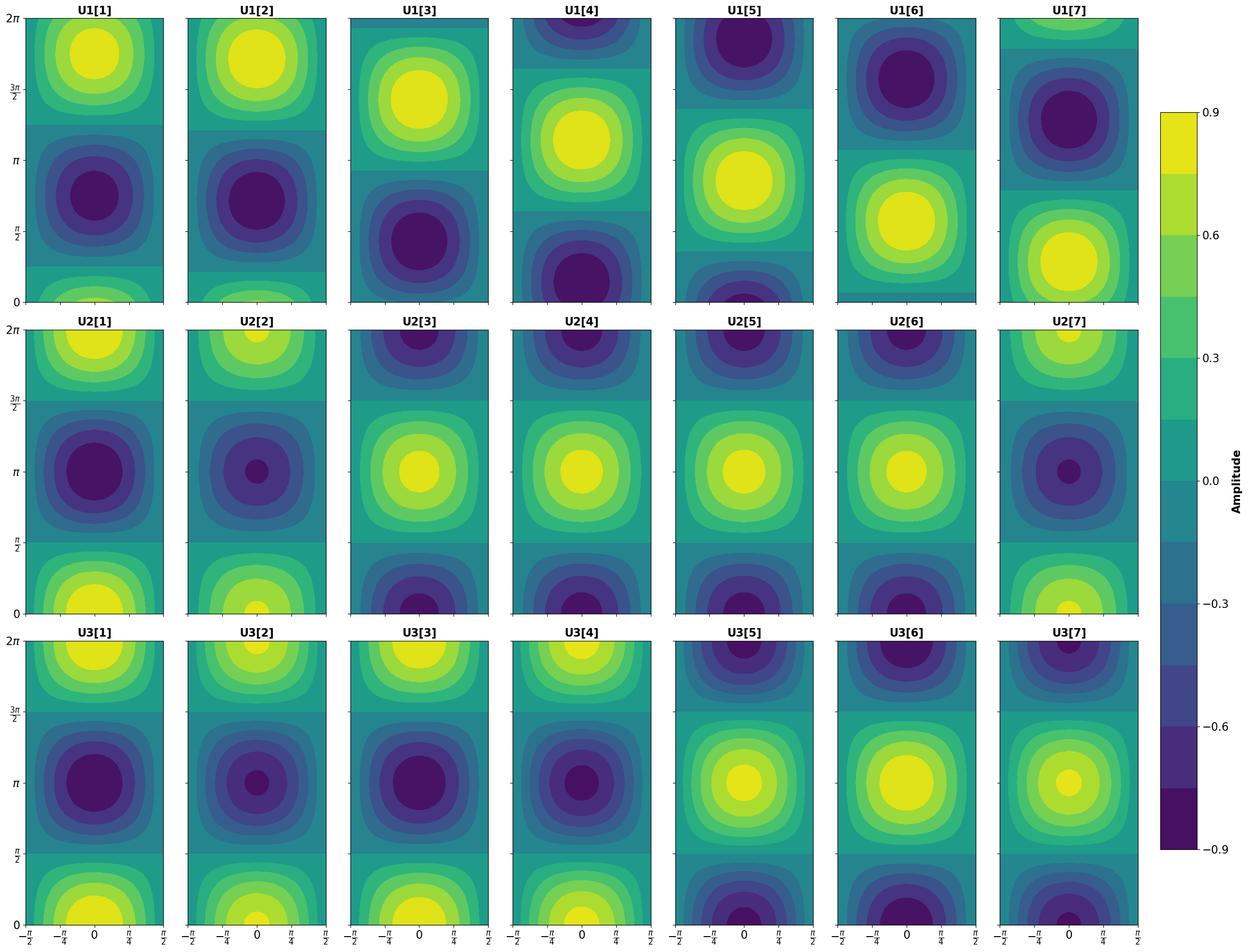}
    \caption{For the Dihedral symmetry group $\Gamma = D_7$, the functions $U_1(t,x) = \cos(x)(\cos(t)\operatorname{Re}(v_1) - \sin(t)\operatorname{Im}(v_1))$, $U_2(t,x) = \cos(x)\cos(t)\operatorname{Re}(v_1)$ and $U_3(t,x) = \cos(x)\cos(t)\operatorname{Im}(v_1)$ exhibit symmetries at least $(H_{1,1}^0)$, $(T_{1,1}^0)$ and $(S_{1,1}^0)$,
 respectively}
\label{fig:eigenfunctions}
\end{figure}
%%%%%%%%%%%%%%%%%%%%%%%%%%
%%%%%%%%%%%%%%%%%%%%%%%%%%
%%%%%%%%%%%%%%%%%%%%%%%%%%
\newpage

\appendix
\section{The $G$-Equivariant Degree}\label{sec:appendix_eqdeg}
\noi{\bf  Equivariant notation.}
Let $G$ be a compact Lie group. For any subgroup  $H \leq G$, we denote by $(H)$ its conjugacy class,
by $N(H)$ its normalizer by $W(H):=N(H)/H$ its Weyl group in $G$. The set of all subgroup conjugacy classes in $G$ is denoted by $\Phi(G):=\{(H): H\le G\}$ and has a natural partial order defined as follows
\[
(H)\leq (K) \iff \exists_{ g\in G}\;\;gHg^{-1}\leq K.
\]
As is possible with any partially ordered set, we extend the natural order over $\Phi(G)$ to a total order, which we indicate by $<$ to differentiate the two relations. Moreover, we put $\Phi_0 (G):= \{ (H) \in \Phi(G) \; : \; \text{$W(H)$  is finite}\}$ and, for any $(H),(K) \in \Phi_0(G)$, we denote by $n(H,K)$ the number of subgroups $\tilde K \leq G$ with $\tilde K \in (K)$ and $H \leq \tilde K$.
\vs
Given a $G$-space $X$ with an element $x \in X$, we denote by
$G_{x} :=\{g\in G:gx=x\}$ the {\it isotropy group} of $x$
and we call $(G_{x}) \in \Phi(G)$  the {\it orbit type} of $x \in X$. Put $\Phi(G,X) := \{(H) \in \Phi_0(G)  : 
(H) = (G_x) \; \text{for some $x \in X$}\}$ and  $\Phi_0(G,X):= \Phi(G,X) \cap \Phi_0(G)$. For a subgroup $H\leq G$, the subspace $
X^{H} :=\{x\in X:G_{x}\geq H\}$ is called the {\it $H$-fixed-point subspace} of $X$. If $Y$ is another $G$-space, then a continuous map $f : X \to Y$ is said to be {\it $G$-equivariant} if $f(gx) = gf(x)$ for each $x \in X$ and $g \in G$.
\vs
\noi{\bf The Burnside Ring and Axioms of Equivariant Brouwer Degree.}
The free $\mathbb{Z}$-module $A(G) := \mathbb{Z}[\Phi_0(G)]$ has a natural ring structure when equipped with the multiplicative operation defined as follows
\begin{align} \label{def:burnside_product}
    (H) \cdot (K) := \sum\limits_{(L) \in \Phi_0(G)} n_L(L), 
\end{align}
for any pair of generators $(H),(K) \in \Phi_0(G)$, and where the coefficients $n_L \in \mathbb{Z}$ are given by the recurrence formula
\begin{align} \label{def:recurrence_formula_coefficients_burnside_product}
    n_L := \frac{n(L,H) |W(H)| n(L,K) |W(K)| - \sum_{(\tilde L) > (L)} n_{\tilde L} n(L,\tilde L) |W(\tilde L)|}{|W(L)|}.
\end{align}
Any {\it Burnside Ring} element $a \in A(G)$ can be expressed as a formal sum over some finite number of generator elements  
\[
a = n_1(H_1) + n_2(H_2) + \cdots + n_N(H_N),
\]
and we use the notation
\[
\operatorname{coeff}^H(a) = n_H,
\]
to specify the integer coefficient standing next to the generator element $(H) \in \Phi_0(G)$.
\vs
Let $V$ be an orthogonal $G$-representation with an open bounded $G$-invariant set $\Om \subset V$. A $G$-equivariant map $f:V \rightarrow V$ is said to be $\Om$-admissible if $f(x) \neq 0$ for all $x \in \partial \Om$, in which case the pair $(f,\Om)$ is called an admissible $G$-pair in $V$. We denote by $\mathcal M^G(V)$ the set of all admissible $G$-pairs in $V$ and by $\mathcal{M}^{G}$ the set of all admissible $G$-pairs defined by taking a union over all orthogonal $G$-representations, i.e.
\[
\mathcal M^G := \bigcup\limits_V \mathcal M^G(V).
\]
The $G$-equivariant Brouwer degree provides an algebraic count of solutions, according to their symmetric properties, to equations of the form
\[
f(x) = 0, \; x \in \Omega,
\]
where $(f, \Omega) \in \mathcal M^G$. In fact, it is standard (cf. \cite{AED}, \cite{book-new}) to define the {\it $G$-equivariant Brouwer degree} as the unique map associating to every admissible $G$-pair $(f,\Om)\in \mathcal M^G$ an element from the Brouwer Ring $A(G)$, satisfying the four {\it degree axioms} of existence, additivity, homotopy and normalization:
\vs
\begin{theorem} \rm
	\label{thm:GpropDeg} There exists a unique map $\gdeg:\mathcal{M}
	^{G}\to A(G)$, that assigns to every admissible $G$-pair $(f,\Omega)$ the Burnside Ring element
	\begin{equation}
		\label{eq:G-deg0}\gdeg(f,\Omega)=\sum_{(H) \in \Phi_0(G)}%
		{n_{H}(H)},
	\end{equation}
	satisfying the following properties:
	\begin{itemize}
		\item[] \textbf{(Existence)} If  $n_{H} \neq0$ for some $(H) \in \Phi_0(G)$ in \eqref{eq:G-deg0}, then there
		exists $x\in\Omega$ such that $f(x)=0$ and $(G_{x})\geq(H)$.

		\item[] \textbf{(Additivity)} 
  For any two  disjoint open $G$-invariant subsets
  $\Omega_{1}$ and $\Omega_{2}$ with
		$f^{-1}(0)\cap\Omega\subset\Omega_{1}\cup\Omega_{2}$, one has
		\begin{align*}
			\gdeg(f,\Omega)=\gdeg(f,\Omega_{1})+\gdeg
			(f,\Omega_{2}).
		\end{align*}

		\item[] \textbf{(Homotopy)} For any 
  $\Omega$-admissible $G$-homotopy, $h:[0,1]\times V\to V$, one has
		\begin{align*}
			\gdeg(h_{t},\Omega)=\mathrm{constant}.
		\end{align*}

		\item[] \textbf{(Normalization)}
  For any open bounded neighborhood of the origin in an orthogonal $G$-representation $V$ with the identity operator $\id:V \rightarrow V$, one has
		\begin{align*}
			\gdeg(\id,\Omega)=(G).
		\end{align*}
	\end{itemize}
 \vs
The following are additional properties of the map $\gdeg$ which can be derived from the four axiomatic properties defined above (cf. \cite{AED}, \cite{book-new}):		
\begin{itemize}
		\item[] {\textbf{(Multiplicativity)}} For any $(f_{1},\Omega
		_{1}),(f_{2},\Omega_{2})\in\mathcal{M} ^{G}$,
		\begin{align*}
			\gdeg(f_{1}\times f_{2},\Omega_{1}\times\Omega_{2})=
			\gdeg(f_{1},\Omega_{1})\cdot \gdeg(f_{2},\Omega_{2}),
		\end{align*}
		where the multiplication `$\cdot$' is taken in the Burnside ring $A(G )$.

		\item[] \textbf{(Recurrence Formula)} For an admissible $G$-pair
		$(f,\Omega)$, the $G$-degree \eqref{eq:G-deg0} can be computed using the
		following Recurrence Formula:
		\begin{equation}
			\label{eq:RF-0}n_{H}=\frac{\deg(f^{H},\Omega^{H})- \sum_{(K)>(H)}{n_{K}\,
					n(H,K)\, \left|  W(K)\right|  }}{\left|  W(H)\right|  },
		\end{equation}
		where $\left|  X\right|  $ stands for the number of elements in the set $X$
		and $\deg(f^{H},\Omega^{H})$ is the Brouwer degree of the map $f^{H}%
		:=f|_{V^{H}}$ on the set $\Omega^{H}\subset V^{H}$.
	\end{itemize}
\end{theorem}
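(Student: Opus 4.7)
The plan is to establish Theorem~\ref{thm:GpropDeg} by a two-stage argument: first I would prove uniqueness by showing that the four axioms force the Recurrence Formula \eqref{eq:RF-0}, and then I would establish existence by \emph{defining} $\gdeg$ through that very formula and verifying the axioms. For the uniqueness half, I would fix an admissible $G$-pair $(f,\Omega)\in\mathcal M^{G}(V)$ and, for any $(H)\in\Phi_{0}(G)$, restrict attention to the $W(H)$-equivariant map $f^{H}:=f|_{V^{H}}:V^{H}\to V^{H}$, which is $\Omega^{H}$-admissible. Since $W(H)$ is finite, the normalization, homotopy and additivity axioms applied to $f^{H}$, together with a generic (equivariant) perturbation making $(f^{H})^{-1}(0)\cap\Omega^{H}$ a finite disjoint union of $W(H)$-orbits $W(H)\cdot x_{i}$, would force the classical Brouwer degree to decompose as
\[
\deg(f^{H},\Omega^{H})=\sum_{(K)\ge(H)} n_{K}\,n(H,K)\,|W(K)|,
\]
because each $G$-orbit in $\Omega$ of type $(K)\ge(H)$ contributes $n(H,K)$ copies of $W(K)$-orbits to the zero set of $f^{H}$. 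Solving this triangular system from the top of the poset $\Phi_{0}(G)$ downward yields \eqref{eq:RF-0}, and hence uniqueness of the coefficients $n_{H}$.

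For existence, I would \emph{define} $\gdeg(f,\Omega)\in A(G)$ by the formula \eqref{eq:G-deg0} with coefficients prescribed inductively by \eqref{eq:RF-0} (starting from maximal orbit types in $\Phi_{0}(G,\overline\Omega)$, of which there are only finitely many by compactness of $\overline\Omega$ and the slice theorem). The nontrivial content here is integrality: one must verify that the right-hand side of \eqref{eq:RF-0} is divisible by $|W(H)|$ for every $(H)$. I would handle this by the standard reduction to a normal (generic) $G$-equivariant approximation $\tilde f$ of $f$ whose zero set in $\Omega$ is a disjoint union of regular $G$-orbits; for such $\tilde f$ the proposed coefficients $n_{H}$ count, with signs, the number of $G$-orbits of type exactly $(H)$ in $\tilde f^{-1}(0)\cap\Omega$, which is visibly an integer. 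Homotopy invariance of the classical degrees on the fixed-point subspaces $V^{H}$ (together with the fact that a $G$-homotopy restricts to homotopies on each $V^{H}$) then ensures that this integrality and the value of $\gdeg(f,\Omega)$ are independent of the choice of approximation.

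With $\gdeg$ so defined, verifying the four axioms would be routine: \textbf{(Existence)} follows because if $n_{H}\ne 0$ then by \eqref{eq:RF-0} there must be a contribution coming from a zero of $f^{K}$ with $(K)\ge(H)$, giving an $x\in\Omega$ with $f(x)=0$ and $(G_{x})\ge(H)$; \textbf{(Additivity)} follows because the classical degrees $\deg(f^{H},\Omega^{H})$ are additive over disjoint unions, and the recurrence \eqref{eq:RF-0} is linear in these classical degrees; \textbf{(Homotopy)} follows because an $\Omega$-admissible $G$-homotopy $h$ restricts to an $\Omega^{H}$-admissible homotopy $h^{H}$ on each fixed-point subspace, so each $\deg(h_{t}^{H},\Omega^{H})$ is constant in $t$, and hence so are the inductively defined coefficients; \textbf{(Normalization)} is immediate since $\deg(\id|_{V^{H}},\Omega^{H})=1$ for the ball, and feeding $n_{K}=0$ for $(K)>(G)$ together with $\deg(\id^{G},\Omega^{G})=1$ into \eqref{eq:RF-0} produces $n_{G}=1$ and forces $n_{H}=0$ for all $(H)\ne(G)$ by a downward induction.

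The main obstacle is the integrality step in the existence part: showing that the quotient in \eqref{eq:RF-0} is always an integer. A naive induction produces rational numbers, and one genuinely needs the geometric interpretation via a generic normal $G$-approximation (so that zeros arrange into disjoint regular orbits) to see that the coefficients count orbits. Once integrality is in hand, \textbf{(Multiplicativity)} follows from the definition of the product \eqref{def:burnside_product}--\eqref{def:recurrence_formula_coefficients_burnside_product} in $A(G)$ together with the product formula for classical Brouwer degree applied on each $(V_{1}\times V_{2})^{H}=V_{1}^{H}\times V_{2}^{H}$, and the \textbf{Recurrence Formula} is built in by construction.
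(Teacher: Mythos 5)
The paper itself supplies no proof of Theorem~\ref{thm:GpropDeg}; it is recorded as a standard fact with a pointer to \cite{AED} and \cite{book-new}, so your proposal has to be judged on its own terms rather than against an in-paper argument. Your overall plan does match the template used in those references: prove uniqueness by passing to a regular normal approximation and solving the resulting triangular system (which is exactly the Recurrence Formula), then prove existence by taking \eqref{eq:RF-0} as the definition, checking integrality, and verifying the axioms. You also correctly isolate integrality of the quotient in \eqref{eq:RF-0} as the key technical obstacle and propose the right remedy, namely interpreting $n_H$ as a signed count of $G$-orbits of type $(H)$ in the zero set of a regular normal approximation.

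There is, however, a real gap in the uniqueness half. You assert that normalization, homotopy and additivity, after a generic equivariant perturbation $\tilde f$, ``force'' the decomposition $\deg(\tilde f^{H},\Omega^{H})=\sum_{(K)\ge(H)} n_{K}\,n(H,K)\,|W(K)|$. This conflates two different facts. The identity $\deg(\tilde f^{H},\Omega^{H})=\sum_{(K)\ge(H)} m_{K}\,n(H,K)\,|W(K)|$, where $m_K$ is the signed number of $G$-orbits of type $(K)$ in $\tilde f^{-1}(0)\cap\Omega$, is an elementary statement about the classical Brouwer degree (each such orbit meets $\Omega^{H}$ in exactly $|(G/K)^{H}|=n(H,K)|W(K)|$ regular zeros of like sign). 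But this says nothing yet about $\gdeg$. The nontrivial step, which your plan skips, is to show that the coefficients $n_K$ of \emph{any} map satisfying the four axioms must equal those orbit counts $m_K$. That requires the lemma that, after localizing by additivity to a tubular neighborhood of a single regular normal zero orbit of type $(K)$, the value of $\gdeg$ must be $\pm(K)$; normalization alone does not give this, since it only covers $\id$ on a ball (a configuration whose zero set has no nontrivial orbit type), and the references prove the lemma by a tubular-neighborhood and complementing-map homotopy argument that is the real content of the uniqueness proof. Without that lemma your triangular system is uniquely solvable, but you have not shown that an arbitrary $\gdeg$ satisfying the axioms actually satisfies that system, so uniqueness does not yet follow. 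Once the lemma is granted, the remainder of your plan — the recurrence-based definition, integrality via orbit counting, axiom verification, and multiplicativity from the Burnside product combined with the product formula for the Brouwer degree on fixed-point spaces — is sound.
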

\vs
\noi{\bf Computation of Brouwer equivariant degree.} 
We denote by $\{ \mathcal V_i \}_{i \in \mathbb{N}}$ the set of all irreducible $G$-representations and define the $i$-th basic degree as follows
\begin{align*}
\deg_{\mathcal{V}_{i}}:=\gdeg(-\id,B(\mathcal{V} _{i})).
\end{align*} 
Given any orthogonal $G$-representation with a $G$-isotypic decomposition
\[
V = \bigoplus_{i \in \mathbb{N}} V_i,
\]
and any $G$-equivariant  linear isomorphism $T:V\to V$, the Multiplicativity and Homotopy properties of the $G$-equivariant Brouwer degree, together with Schur's Lemma implies
\begin{align*}
%\label{eq:prod-prop}
  \gdeg(T,B(V))=\prod_{i \in \mathbb{N}} \gdeg
	(T_{i},B(V_{i}))= \prod_{i \in \mathbb{N}}\prod_{\mu\in\sigma_{-}(T)} \left(
	\deg_{\mathcal{V} _{i}}\right)  ^{m_{i}(\mu)}%  
\end{align*}
where $T_{i}=T|_{V_{i}}$ and $\sigma_{-}(T)$ denotes the real negative
spectrum of $T$. \vskip.3cm

Notice that each of the basic degrees: 
\begin{align*}
	\deg_{\mathcal{V} _{i}}=\sum_{(H)}n_{H}(H),
\end{align*}
can be practically computed, using the recurrence formula  \eqref{eq:RF-0}, as follows
\begin{align*}
n_{H}=\frac{(-1)^{\dim\mathcal{V} _{i}^{H}}- \sum_{H<K}{n_{K}\, n(H,K)\, \left|  W(K)\right|  }}{\left|  W(H)\right|  }.
\end{align*}
\vs
The following fact is well-known (see for example \cite{survey}).
\begin{lemma} \rm
    For any irreducible $G$-representation $\mathcal V$, the basic degree $\deg_{\mathcal V} \in A(G)$ is an involutive element of the Burnside Ring, i.e.
    \[
    (\deg_{\cV})^2=\deg_{\cV} \cdot \deg_{\cV}=(G).
    \]
\end{lemma}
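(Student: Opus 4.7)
The plan is to use the multiplicativity axiom to express $(\deg_{\mathcal V})^2$ as the equivariant degree of a single linear map on the doubled representation $\mathcal V \oplus \mathcal V$, and then construct an equivariant admissible homotopy from this map back to the identity. Starting from the definition $\deg_{\mathcal V} = \gdeg(-\id_{\mathcal V}, B(\mathcal V))$, the multiplicativity property from Theorem~\ref{thm:GpropDeg} gives
\begin{equation*}
(\deg_{\mathcal V})^2 = \gdeg\bigl((-\id_{\mathcal V}) \times (-\id_{\mathcal V}),\; B(\mathcal V) \times B(\mathcal V)\bigr) = \gdeg\bigl(-\id_{\mathcal V \oplus \mathcal V},\; \Omega\bigr),
\end{equation*}
where $\Omega := B(\mathcal V) \times B(\mathcal V)$ is an open bounded $G$-invariant neighborhood of the origin in the diagonal $G$-representation $\mathcal V \oplus \mathcal V$, and the product map is literally $-\id$ on $\mathcal V \oplus \mathcal V$.

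The key step is to identify $\mathcal V \oplus \mathcal V \simeq \mathcal V \otimes \bbR^2$ and define the rotation homotopy
\begin{equation*}
h_t(x,y) := \bigl(\cos(\pi t)\, x - \sin(\pi t)\, y,\; \sin(\pi t)\, x + \cos(\pi t)\, y\bigr), \quad t \in [0,1].
\end{equation*}
At $t=0$ we recover $h_0 = \id_{\mathcal V \oplus \mathcal V}$, and at $t=1$ we recover $h_1 = -\id_{\mathcal V \oplus \mathcal V}$. Equivariance of $h_t$ under the diagonal $G$-action $g\cdot(x,y)=(gx,gy)$ holds because $G$ acts by the same linear operator on each copy of $\mathcal V$, and this diagonal action commutes with the rotation acting on the $\bbR^2$-factor. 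Moreover, each $h_t$ is an orthogonal linear isomorphism, so its only zero is the origin, which lies in the interior of $\Omega$; hence the homotopy is $\Omega$-admissible.

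Applying the homotopy and normalization axioms of the $G$-equivariant Brouwer degree yields
\begin{equation*}
(\deg_{\mathcal V})^2 \;=\; \gdeg\bigl(-\id_{\mathcal V \oplus \mathcal V}, \Omega\bigr) \;=\; \gdeg\bigl(\id_{\mathcal V \oplus \mathcal V}, \Omega\bigr) \;=\; (G),
\end{equation*}
which is the claimed identity. The only point requiring a moment's attention is the admissibility of $h_t$ on the polydisc $B(\mathcal V)\times B(\mathcal V)$ rather than on a round ball, but this reduces to injectivity of each $h_t$, which is immediate since $h_t$ is an orthogonal transformation; alternatively one could invoke the additivity/excision-style flexibility allowed by the degree axioms to replace $\Omega$ by $B(\mathcal V\oplus\mathcal V)$. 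The main conceptual content of the proof is therefore the construction of the equivariant rotation homotopy, which trivializes $-\id$ on any even-multiple direct sum of a $G$-representation.
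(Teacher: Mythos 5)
Your proof is correct. Note that the paper itself does not prove this lemma but simply records it as a well-known fact with a citation to the survey of Balanov, Krawcewicz, Rybicki and Steinlein, so there is no internal proof to compare against. Your argument---passing to $\mathcal V \oplus \mathcal V$ via multiplicativity, observing that the rotation $h_t = \cos(\pi t)\id - \sin(\pi t)J$ (with $J$ the complex structure swapping the two factors) is a $G$-equivariant, $\Omega$-admissible linear homotopy from $\id$ to $-\id$, and then invoking normalization---is the standard self-contained justification of this identity and would be a sensible addition if the authors wished to avoid the external reference. All the steps are sound: the admissibility of the homotopy holds because each $h_t$ is an orthogonal isomorphism whose unique zero is the origin, which lies in the interior of $B(\mathcal V)\times B(\mathcal V)$; the equivariance holds because $G$ acts diagonally and commutes with the scalar coefficients of the rotation; and the normalization axiom as stated in the paper applies to any open bounded $G$-invariant neighborhood of the origin, so there is no need to replace the polydisc by a round ball.
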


\section{The $S^1$-Equivariant Degree} \label{sec:appendix_S1}
Let $W = \br^n$ be an orthogonal $S^1$-representation with an open bounded $S^1$-invariant set $\Om \subset \br \times W$ (we assume that $S^1$ acts trivially on $\br$). An $S^1$-equivariant map $f: \br \times W \rightarrow W$ is said to be {\it $\Om$-admissible} if $f^{-1}(0) \cap \partial \Om = \emptyset$, in which case the pair $(f,\Om)$ is called an {\it admissible $S^1$-pair in $W$}. We denote by $\mathcal M_1^{S^1}(\br \times W)$ the set of all admissible $S^1$-pairs in $W$ and by $\mathcal M_1^{S^1}$ the set of {\it all} admissible $S^1$-pairs, defined by taking a union over all orthogonal $S^1$-representations as follows
\[
\mathcal M_1^{S^1}:= \bigcup\limits_W \mathcal M_1^{S^1}(\br \times W).
\]
\begin{remark} \rm
    If $W$ is a complex $S^1$-representation, then any complex linear map $W \rightarrow W$ is $S^1$-equivariant. Moreover, for any $w \in W$ one has the following two possibilities
    \[
    G_w = \begin{cases}
        \bz_k \quad & \text{ for some } k \in \bz; \\
        S^1
    \end{cases}
    \]
\end{remark}
We denote by  $\Phi(S^1)$ the set of all subgroup conjugacy classes in $S^1$ and by $\Phi_n(S^1)$ the subgroup conjugacy classes in $S^1$ with $n$-dimensional Weyl groups, i.e.
\[
\Phi_n(S^1) := \{ (H) \in \Phi(S^1) : \dim(W(H)) = n \}.
\]
\begin{remark} \rm
The subgroup conjugacy classes are specified according to the dimension of their Weyl groups as follows
  \[
\Phi_n(S^1) := \begin{cases}
    (S^1) \quad & \text{ if } n = 0; \\
    \{ (\bz_k) \}_{k \in \bn} \quad & \text{ if } n = 1; \\
    \emptyset \quad & \text{ otherwise.}
\end{cases}
\]  
\end{remark}
We denote by $A_1(S^1)$ the free $\bz$-module generated by the set $\Phi_1(S^1) = \{ (\bz_k) \in \Phi_(S^1): k \in \bn \}$. Any element $a \in A_1(S^1) = \bz[\Phi_1(S^1)]$ can be represented by a formal sum
\[
a = \sum_{k \in \bn} n_k(\bz_k), \quad n_k \in \bz,
\]
where $n_k = 0$ for almost every $k \in \bn$. We are now in a position to present an axiomatic definition of the $S^1$-equivariant degree.
\begin{theorem} \rm
    There exists a function $\s1deg: \mathscr M_1^{S^1} \rightarrow A_1(S^1)$ satisfying the properties:
    \begin{enumerate}[label=($S_\arabic*$)]
        \item {\bf (Additivity)}: If for any two $S^1$-invariant, disjoint subsets $\Om_1,\Om_2 \subset \Om$, one has $f^{-1}(0) \cap \Om \subset \Om_1 \cup \Om_2$, then
        \[
        \s1deg(f,\Om) = \s1deg(f,\Om_1) + \s1deg(f,\Om_2).
        \]
        \item {\bf (Homotopy)}: If $h:[0,1] \times \br \times W \rightarrow W$ is an $\Om$-admissible $S^1$-equivariant homotopy, then
        \[
        \s1deg(h_0,\Om) = \s1deg(h_1,\Om),
        \]
        where $h_t: \br \times W \rightarrow W$ is used to indicate the map $h_t(\cdot, \cdot) := h(t, \cdot,\cdot)$.
        \item {\bf (Normalization)}: If $f$ is regular normal in $\Om$ and $f^{-1}(0) \cap \Om = G(w_0)$ for some $w_0 \in \Om$, then one has
        \[
        \s1deg(f,\Om) = \begin{cases}
            \rho_0 (\bz_l) \quad & \text{if } G_{w_0} = \bz_l \text{ for some } l \in \bn; \\
             0  \quad & \text{if } G_{w_0} = S^1, 
        \end{cases}
        \]
        where $\rho_0 := \operatorname{sign}\det(\left.Df(w_0)\right|_{S_{w_0}})$ (here $S_{w_0} \subset \br \times W$ denotes the slice to the orbit $G(w_0)$ at $w_0$, i.e. the orthogonal subspace in $\br \times W$ to the orbit $G(w_0)$ at $w_0$).
    \end{enumerate}
\end{theorem}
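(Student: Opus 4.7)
The plan is to construct $\s1deg$ explicitly on a dense subclass of admissible pairs first and then to extend continuously to all of $\mathscr M_1^{S^1}$ by approximation. Specifically, one begins with an admissible pair $(f,\Om) \in \mathscr M_1^{S^1}(\br \times W)$ in which $f$ is \emph{regular normal} in the sense that the zero set $f^{-1}(0) \cap \Om$ consists of finitely many isolated $S^1$-orbits $S^1(w_1),\ldots,S^1(w_N)$, at each of which $Df(w_i)$ restricts to an isomorphism on the slice $S_{w_i}$. By the Slice Theorem, one may choose a small tubular $S^1$-invariant neighborhood around each orbit and, via the Additivity axiom one intends to verify, reduce the computation to a single orbit $S^1(w_i)$. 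On such a tube one then \emph{defines} $\s1deg$ orbit-by-orbit via the Normalization formula $\rho_i(\bz_{l_i})$ (with $\rho_i = 0$ if $G_{w_i} = S^1$), and sets $\s1deg(f,\Om):=\sum_{i=1}^N \rho_i(\bz_{l_i})$.

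The next step is the extension to an arbitrary admissible pair $(f,\Om)\in \mathscr M_1^{S^1}$. Because the admissibility condition $f^{-1}(0)\cap \partial \Om = \emptyset$ is open in the $C^0$-topology, one can invoke an equivariant Sard--Brown-type approximation theorem to exhibit a regular normal $S^1$-equivariant map $\tilde f$ with $\sup_{(\lambda,w)\in \overline \Om}\|\tilde f-f\|$ arbitrarily small, hence $\Om$-admissibly $S^1$-homotopic to $f$ via the straight-line homotopy $h_t := (1-t)f + t\tilde f$. Then one \emph{defines} $\s1deg(f,\Om):= \s1deg(\tilde f,\Om)$ using the regular-normal construction above. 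The core task at this stage is well-definedness: given two regular normal approximations $\tilde f_0,\tilde f_1$, one constructs an $\Om$-admissible $S^1$-homotopy $H:[0,1]\times \br \times W \to W$ between them which is itself transverse, in the sense that $H^{-1}(0)\cap ([0,1]\times \Om)$ is a disjoint union of compact one-dimensional $S^1$-invariant manifolds with boundary lying in $\{0,1\}\times \Om$. Tracing the orbit types along each such ``arc'' (which are constant on connected components because any change of $\bz_l$ would force the Weyl-group dimension to jump) and computing that the algebraic contributions at the two endpoints of a component cancel yields $\s1deg(\tilde f_0,\Om)= \s1deg(\tilde f_1,\Om)$ in $A_1(S^1)$.

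Once well-definedness is established, the three axioms follow in order. Additivity is immediate from the construction via orbit-counting, because a regular normal approximation of $(f,\Om)$ restricts to regular normal approximations on $(f,\Om_1)$ and $(f,\Om_2)$. Homotopy invariance is the heart of the construction: any $\Om$-admissible $S^1$-homotopy $h_t$ can be uniformly approximated by a regular normal $S^1$-homotopy $\tilde h_t$ that remains $\Om$-admissible, and the same transversality cobordism argument used for well-definedness shows $\s1deg(\tilde h_0,\Om) = \s1deg(\tilde h_1,\Om)$. Normalization holds by definition.

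The main obstacle will be the well-definedness step, namely controlling the one-dimensional transverse zero set of a homotopy $H$ between two regular normal approximations. The delicate point is that on a connected arc of $H^{-1}(0)$, generic degeneracies may occur where $DH$ has corank one in the slice direction; one must check that the orientation sign $\rho$ flips precisely when required so that the algebraic count in $A_1(S^1)$ remains invariant along the arc, and that orbits with continuous isotropy $S^1$ (which contribute $0$ by Normalization) can be generically avoided by perturbation within the equivariant category. This transversality bookkeeping in the presence of the $S^1$-action is the technical nucleus; all other steps are standard consequences. A detailed execution appears in \cite{AED,book-new}.
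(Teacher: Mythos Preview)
The paper does not actually prove this theorem: it is stated in Appendix~\ref{sec:appendix_S1} as background material and presented as an axiomatic definition of the $S^1$-equivariant degree, with no argument supplied beyond implicit reference to \cite{AED,book-new}. So there is no ``paper's own proof'' to compare your proposal against.

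That said, your sketch is the standard construction one finds in those references: define the degree first on regular normal pairs via the orbit-by-orbit normalization formula, extend to arbitrary admissible pairs by equivariant generic approximation, and prove well-definedness (and simultaneously the Homotopy axiom) via a transverse-homotopy/cobordism argument tracking isotropy types along one-dimensional zero sets. The outline is accurate and the identification of the well-definedness step as the technical core is correct. Two minor remarks: (i) you should make explicit that the equivariant approximation is done stratum-by-stratum over the orbit-type stratification of $\br\times W$, since naive genericity need not respect the $S^1$-action; (ii) the claim that isotropy $\bz_l$ is constant along a connected arc of the transverse zero set deserves a line of justification (it follows because the isotropy is locally constant on the smooth part of the orbit-type stratification and the arc lies in a single stratum by transversality). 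With those clarifications, your proposal matches the standard proof in \cite{AED,book-new} that the paper is citing.
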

There is a well-defined degree, called the {\it cumulative $S^1$-degree}, associated with the $S^1$-degree and whose role in the construction of the twisted equivariant degree is analogous to the role played by the local Brouwer degree in the equivariant Brouwer degree.
\begin{definition} \rm
The cumulative $S^1$-degree is the function $\deg_{S^1}:\mathscr M_1^{S^1} \rightarrow \bz$, given by
\[
\deg_{S^1}(f,\Om) := \sum_{s > 0} d_s, \quad d_s := \operatorname{coeff}^{(\bz_s)}(\s1deg(f,\Om)).
\]
\end{definition}
\begin{remark} \rm
    The cumulative $S^1$-degree is a total algebraic count of the non-trivial $S^1$ orbits in the solution set $f^{-1}(0) \cap \Om$.
\end{remark}
\section{The Twisted Equivariant Degree} \label{sec:appendix_twisted_deg}
Let $\Gamma$ be a compact Lie group and define the product group $G:= S^1 \times \Gamma$. Any closed subgroup in $G$
can be uniquely identified with a triple $(K,\varphi,l)$, called the {\it twisted decomposition} of that subgroup, and consisting of a subgroup $K \leq \Gamma$, a homomorphism $\varphi: \Gamma \rightarrow S^1$ and a number $l \in \bn \cup \{0\}$ as follows
\begin{align} \label{def:twisted_subgroups}
 K^{\varphi,l} := \{(z,\gamma) \in S^1 \times K: \varphi(\gamma) = z^l \}.
\end{align}
\begin{remark} \rm
In the case that $l = 0$, the triple $(K,\varphi,0)$ is always (that is, for any subgroup $K \leq \Gamma$ and homomorphism $\varphi: \Gamma \rightarrow S^1$) associated with
the {\it product subgroup}
\[
K^{\varphi,0} = S^1 \times K.
\]
\end{remark}
We denote by $\Phi_1^t(G)$ the set of {\it conjugacy classes} of twisted subgroups \eqref{def:twisted_subgroups} for which $W_\Gamma(K)$ is finite, i.e.
\begin{align} \label{def:twisted_subgroup_conjugacy_classes}
\Phi_1^t(G) := \{ (K^{\varphi,s}) \in \Phi(G) : \dim W_\Gamma(K) < \infty \},
\end{align}
and we define the free $\bz$-module generated by $\Phi_1^t(G)$ as follows
\[
A^t_1(G) := \bz[\Phi_1^t(G)].
\]
All elements $a \in A^t_1(G)$ are of the form
\begin{align}\label{eq:arbitrary_At1_element}
  a = \sum\limits_{(H) \in \Phi_1^t(G)} n_H(H),  \quad n_H \in \bz, 
\end{align}
where $n_H = 0$ except for a finite number of conjugacy classes $(H) \in  \Phi_1^t(G)$. The coefficient of any particular twisted orbit type $(H) \in \Phi_1^t(G)$ in \eqref{eq:arbitrary_At1_element} is specified with the notation
\begin{align}
\label{def:coefficient_operator_notation} 
\operatorname{coeff}^H(a) := n_H.    \end{align}
There is a natural $A(\Gamma)$-module structure on $A^t_1(G)$ induced by the product $A(\Gamma) \cdot A^t_1(G) \rightarrow A^t_1(G)$ defined, for any pair of generators $((K),(H)) \in \Phi_0(\Gamma) \times \Phi_1^t(G)$, as follows
\begin{align} \label{def:module_product}
(K) \cdot (H^{\varphi,l}) := \sum\limits_{(L) \leq (K)} n_L (L^{\varphi,l}),
\end{align}
where $n_L \in \bz$ is given by the number of type $(L^{\varphi,l})$-type orbits in the $G$-space $\frac{G}{K \times S^1} \times \frac{G}{H^{\varphi,l}}$. More practically,
a multiplication table for the $A(\Gamma)$-module $A_1^t(G)$ can be computed using the recurrence formula
\begin{align} \label{def:recurrence_formula_module_product}
    n_L = \frac{n(L,K)|W_\Gamma(K)|n(L^{\varphi,l},H^{\varphi,l})|W(H^{\varphi,l})/S^1| - \sum_{(\tilde{L}) > (L)} n_{\tilde L} n(L^{\varphi,l},\tilde L^{\varphi,l}) | W(\tilde L^{\varphi,l})/S^1 |}{| W(L^{\varphi,l})/S^1 |}.
\end{align}
An {\it admissible $G$-pair} $(f, \Om)$ in an orthogonal $G$-representation $U$ consists of an open bounded $G$-invariant set $\Om \subset \br \times U$ and a continuous $G$-equivariant map  $f: \br \times U \rightarrow U$ with $f^{-1}(0) \cap \Om = \emptyset$. We denote by $\mathcal M_1^{G}(U)$ the set of all admissible $G$-pairs in $U$ and by $\mathcal M_1^{G}$ the set of {\it all} admissible $G$-pairs, defined by taking a union over all orthogonal $G$-representations as follows 
\[
\mathcal M_1^{G} := \bigcup_{U} \mathcal M_1^{G}(U).
\]
Whereas an axiomatic definition for the $S^1$-degree was provided in Section \ref{sec:appendix_S1}, we now present a closed-form definition for the twisted $G$-equivariant degree.
\begin{definition} \rm
    The {\it twisted $G$-equivariant degree} is a map $\gdeg: \mathcal M_1^G \rightarrow A_1^t(G)$ assigning to every admissible $G$-pair $(f, \Om) \in \mathcal M_1^G$ the $A_1^t(G)$ element
        \[
\gdeg(f,\Om) := \sum\limits_{(H) \in \Phi_1^t(G)} n_H (H),
\]
where the integer coefficients $n_H \in \bz$ are given by the recurrence formula 
\begin{align} \label{def:twisted_degree_recurrence}
n_H := \frac{\deg_{S^1}(f^H,\Om^H) - \sum_{(L)>(H)} n_L n(H,L) | W(L)/S^1|}{|W(H)/S^1|}.    
\end{align}
\end{definition}
\begin{remark} \rm
The twisted $G$-equivariant degree satisfies the standard degree properties
\begin{enumerate}[label=($T_\arabic*$)]
\item\label{t1} {\bf (Existence)} If for any $(H) \in \Phi_1^t(G)$, one has $\operatorname{coeff}^H(\gdeg(f,\Om))$, then there is a solution $x \in \Om$ to the equation $f(x) = 0$ with $(G_x) \geq (H)$.
\item\label{t2} {\bf (Additivity)} If, for any two $G$-invariant disjoint subsets $\Om_1,\Om_2 \subset \Om$, one has $f^{-1}(0) \cap \Om \subset \Om_1 \cup \Om_1$, then
\[
\gdeg(f,\Om) = \gdeg(f,\Om_1) + \gdeg(f,\Om_2).
\]
\item\label{t3} {\bf (Homotopy)} If $h: [0,1] \times \br \times V \rightarrow V$ is an $\Om$-admissible $G$-homotopy, then
\[
\gdeg(h_0,\Om) = \gdeg(h_1,\Om),
\]
where $h_t: \br \times V \rightarrow V$ is used to indicate the map $h_t(\cdot, \cdot) := h(t, \cdot,\cdot)$.
\end{enumerate}
\end{remark}
We require an additional property, linked to the product property of the Brouwer equivariant degree and which can be derived from the properties \ref{t1}--\ref{t3}, to effectively employ the twisted $G$-equivariant degree.
\begin{lemma} \rm \label{lemm:twisted_degree_product_property}
For any admissible $\Gamma$-pair $(\phi,D) \in \mathcal M^\Gamma$ and for any admissible $G$-pair $(f,\Om) \in \mathcal M^{G}_1$, one has $(f \times \phi, \Om \times D) \in \mathcal M^{G}_1$ and
\[
\gdeg(f \times \phi, \Om \times D) = \Gamma \text{\rm -deg}(\phi,D) \cdot \gdeg(f,\Om).
\]
\end{lemma}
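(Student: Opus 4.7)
The plan is to show that both sides of the claimed identity are characterized by the same recurrence relation over the lattice $\Phi_1^t(G)$, and so must coincide. Admissibility of $(f\times\phi,\Om\times D)$ is immediate: since $(f\times\phi)(x,y)=(f(x),\phi(y))$, a zero of $f\times\phi$ on the boundary $\partial(\Om\times D)=(\partial\Om\times\overline D)\cup(\overline\Om\times\partial D)$ would force either $f$ to vanish on $\partial\Om$ or $\phi$ to vanish on $\partial D$, both excluded by hypothesis. This shows $(f\times\phi,\Om\times D)\in\mathcal M_1^G$.

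Next I would describe the fixed-point data. Since $S^1$ acts trivially on $D$, for any twisted subgroup $H=K^{\varphi,l}\le G$ the identity
\[
(\Om\times D)^{K^{\varphi,l}}=\Om^{K^{\varphi,l}}\times D^K,\qquad (f\times\phi)^{K^{\varphi,l}}=f^{K^{\varphi,l}}\times\phi^K
\]
holds. Indeed, a pair $(x,y)$ is fixed by $K^{\varphi,l}$ iff $(z,\gamma)x=x$ for every $(z,\gamma)\in K^{\varphi,l}$ and $\gamma y=y$ for every $\gamma\in K$ (the $S^1$-coordinate is irrelevant on $D$). The cumulative $S^1$-degree then inherits from the Brouwer degree its multiplicativity in the absence of $S^1$ action on one factor:
\[
\deg_{S^1}\!\bigl(f^{K^{\varphi,l}}\!\times\phi^K,\;\Om^{K^{\varphi,l}}\!\times D^K\bigr)=\deg_{S^1}(f^{K^{\varphi,l}},\Om^{K^{\varphi,l}})\cdot\deg(\phi^K,D^K).
\]
This is the only ``analytic'' input of the argument, and I would derive it by noting that a regular normal $G$-approximation $\tilde f$ of $f$ times a regular normal $\Gamma$-approximation $\tilde\phi$ of $\phi$ provides a regular normal $G$-approximation of $f\times\phi$, so that the count of orbits on the product reduces to the product of counts by the standard multiplicativity of Brouwer degree on slices.

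Writing $\gdeg(f,\Om)=\sum_H n_H(H)$ and $\Gamma\text{-deg}(\phi,D)=\sum_K m_K(K)$, I would then prove the formula by Noetherian induction on the partially ordered set $\Phi_1^t(G)$ (starting from the maximal twisted orbit types, which are isolated, and descending). At a twisted orbit type $(L^{\varphi,l})$, apply the defining recurrence \eqref{def:twisted_degree_recurrence} to $(f\times\phi,\Om\times D)$ and substitute the product formula above for $\deg_{S^1}$; on the other side, expand $\Gamma\text{-deg}(\phi,D)\cdot\gdeg(f,\Om)$ via the module recurrence \eqref{def:recurrence_formula_module_product}, using the known Brouwer-degree recurrence to write $\deg(\phi^K,D^K)$ in terms of the $m_{K'}$'s for $(K')\ge(K)$. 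The main obstacle --- and the key combinatorial step --- is to verify that the resulting two expressions for $\operatorname{coeff}^{L^{\varphi,l}}$ agree: this reduces to the identity
\[
n(L^{\varphi,l},K^{\varphi,l})\,|W(K^{\varphi,l})/S^1|=n(L,K)\,|W_\Gamma(K)|
\]
(and its analogues), which holds because the projection $G\to\Gamma$ sends the orbit $G/K^{\varphi,l}$ bijectively onto $\Gamma/K$ and identifies the corresponding Weyl-group quotients. Using the inductive hypothesis for all $(\tilde L)>(L)$, this collapses the two recurrences to the same expression and completes the proof.
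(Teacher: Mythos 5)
The paper gives no proof of this lemma---it states it as a standard property and cites \cite{AED, book-new}---so I will assess your proposal on its own. Your admissibility argument, the fixed-point identifications $(\Om\times D)^{K^{\varphi,l}}=\Om^{K^{\varphi,l}}\times D^{K}$ (valid for $l\geq 1$, since then $K^{\varphi,l}$ projects onto all of $K$), the multiplicativity of $\deg_{S^1}$ against a trivially-acted $\Gamma$-factor, and the Noetherian-induction framework are all correct and are indeed the right ingredients. The gap lies in the step you single out as the ``key combinatorial step'': the identity $n(L^{\varphi,l},K^{\varphi,l})\,|W(K^{\varphi,l})/S^1|=n(L,K)\,|W_\Gamma(K)|$ is false in general, and your justification for it is also false. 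The projection $G\to\Gamma$ sends $G/K^{\varphi,l}$ onto $\Gamma/K$ with full $S^1$-fibers, not bijectively; and $W(K^{\varphi,l})/S^1$ embeds in $W_\Gamma(K)$ only as the subgroup of classes whose conjugation preserves $\varphi|_K$, which can be proper. Concretely, with $\Gamma=D_N$, $K=\langle\gamma\rangle\cong\bz_N$ ($N\geq 3$), $\varphi(\gamma)=e^{2\pi i/N}$, $l=1$, one has $|W_\Gamma(K)|=2$ but $|W(K^{\varphi,1})/S^1|=1$ because $\kappa\gamma\kappa^{-1}=\gamma^{-1}$ alters $\varphi$; the identity already fails with $L=K$.

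Fortunately no such pointwise identity is needed, and your proof closes once it is dropped. After substituting $\deg_{S^1}\bigl((f\times\phi)^{L^{\varphi,l}}\bigr)=\deg_{S^1}\bigl(f^{L^{\varphi,l}}\bigr)\cdot\deg\bigl(\phi^{L}\bigr)$, rewrite each factor in cumulative form using the Brouwer-degree recurrence and the cumulative form of \eqref{def:twisted_degree_recurrence}, namely $\deg(\phi^{L})=\sum_{K}m_K\,n(L,K)|W_\Gamma(K)|$ and $\deg_{S^1}(f^{L^{\varphi,l}})=\sum_{H}n_H\,n(L^{\varphi,l},H^{\varphi,l})|W(H^{\varphi,l})/S^1|$. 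The leading term of the module recurrence \eqref{def:recurrence_formula_module_product} is exactly the product of these two summands term by term; summing \eqref{def:recurrence_formula_module_product} against the weights $m_K n_H$ and invoking the inductive hypothesis for all $(\tilde L)>(L)$ then shows that $\operatorname{coeff}^{L^{\varphi,l}}$ of both sides of the claimed identity satisfies the same upper-triangular linear system, hence the two coincide. In short, the strategy you sketched is sound, but the combinatorial identity you posited as its crux is both false and unnecessary.
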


\subsection{A Computational Formula for the Twisted $G$-Equivariant Leray-Schauder Degree} \label{sec:appendix_twisted_comp_form}
Let $V$ be a finite dimensional, orthogonal $\Gamma$-representation and let 
$\mathcal H$ be an isometric Banach $G$-representation of maps taking values in $V$. For each $m \in \bn$ we denote by $\mathcal W_m \simeq \bc$, the irreducible $S^1$-representation equipped with the {\it $m$-folding $S^1$-action}
\begin{align*} 
%\label{def:S1_action}
    e^{i \theta}w := e^{i m\theta} w, \quad \theta \in S^1, \; w \in \mathcal W_m,
\end{align*}
and by $\mathcal W_0 \simeq \br$, the irreducible $S^1$ representation on which $S^1$ acts trivially. Assuming that a complete list of irreducible $\Gamma$-representations $\{ \mathcal V_j \}_{j = 0}^r$ are made available, $\mathcal H$ has a
$G$-isotypic decomposition of the form
\begin{align} \label{eq:calg_iso_decomp}
    \mathcal H = \bigoplus_{m \geq 0} \bigoplus_{j = 0}^r  \mathcal H_{m,j},
\end{align}
where each {\it $G$-isotypic component} $\mathcal H_{m,j}$  is modeled on the irreducible $G$-representation
\begin{align} \label{eq:calg_irrep}
    \mathcal V_{m,j} := \mathcal W_s \otimes \mathcal V_j. 
\end{align}
Notice that, for $m = 0$, the irreducible $G$-representation $\mathcal V_{0,j}$ coincides with the irreducible $\Gamma$-representation $\mathcal V_j$. On the other hand, every irreducible $G$-representation $\mathcal V_{m,j}$ with $m > 0$ is associated with an admissible $G$-pair $(b_{m,j},\mathscr O_{m,j}) \in \mathcal M^{G}_1(\mathcal V_{m,j})$, called the {\it basic pair} on $\mathcal V_{m,j}$,  consisting of 
the $G$-invariant set,
\begin{align}  \label{def:basic_set}
\mathscr O_{m,j} := \{ (t,v) \in \br \times \mathcal V_{m,j} : \frac{1}{2} < \|v\|_{\mathcal H} < 2, \; |t| < 1 \},
\end{align}
and the $\mathscr O_{m,j}$-admissible $G$-equivariant map
\begin{align} \label{def:basic_map}
    b_{m,j}(t,v) := (1-\|v\|_{\mathcal H} + it) \cdot v.
\end{align}
The {\it basic twisted degree} on $\mathcal V_{m,j}$, denoted $\deg_{\mathcal V_{m,j}} \in A_1^t(G)$, is defined as the twisted $G$-equivariant degree
\begin{align} \label{def:basic_deg}
\deg_{\mathcal V_{m,j}} := \gdeg(b,\Om).
\end{align}
The following result can be derived from the recurrence formula \eqref{def:twisted_degree_recurrence}.
\begin{lemma} \rm
Given an irreducible  $G$-representation $\mathcal V_{m,j}$, one has
    \[
    \deg_{\mathcal V_{m,j}} = \sum\limits_{(H) \in \Phi_1^t(G)} n_H (H),
    \]
    where
    \[
    n_H := \frac{\frac{1}{2}\dim \mathcal V_{m,j}^H - \sum_{(L)>(H)} n_L n(H,L)|W(L)/S^1|}{|W(H)/S^1|}.
    \]
\end{lemma}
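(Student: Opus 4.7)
The plan is to apply the recurrence formula \eqref{def:twisted_degree_recurrence} directly to the basic pair $(b_{m,j}, \mathscr{O}_{m,j})$, which reduces the lemma to a single computation: showing that for every $(H) \in \Phi_1^t(G)$ with $\dim W(H) = 1$, the cumulative $S^1$-degree satisfies
\[
\deg_{S^1}\bigl((b_{m,j})^H, (\mathscr{O}_{m,j})^H\bigr) = \tfrac{1}{2}\dim \mathcal{V}_{m,j}^H.
\]
Once this identity is established, substitution into \eqref{def:twisted_degree_recurrence} yields the stated recurrence for the coefficients $n_H$ verbatim.

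To set up this computation, I would first identify the $H$-fixed-point structure. Since $\mathcal{V}_{m,j} = \mathcal{W}_m \otimes \mathcal{V}_j$ is irreducible and the $S^1$-action on $\mathcal{V}_{m,j}$ is the diagonal $m$-folding action coming from $\mathcal{W}_m$, the fixed subspace $\mathcal{V}_{m,j}^H$ is a complex subspace of complex dimension $k := \tfrac{1}{2}\dim_{\mathbb{R}} \mathcal{V}_{m,j}^H$, and the residual $S^1$-action on $\mathcal{V}_{m,j}^H$ is still the $m$-folding action. Consequently, the restricted map $(b_{m,j})^H(t,v) = (1 - \|v\|_{\mathcal H} + it) v$ is, modulo the trivial identification $\mathcal{V}_{m,j}^H \cong \mathcal{W}_m^k$, again a basic-type $S^1$-equivariant map on $\mathbb{R} \times \mathcal{W}_m^k$. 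Its zero set inside $(\mathscr{O}_{m,j})^H$ is the unit sphere $\{(0,v) : \|v\|_{\mathcal H} = 1\}$, which collapses under the $S^1$-action to the complex projective space $\mathbb{C}P^{k-1}$; this is precisely the geometric content that should produce the factor $k$.

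The genuinely substantive step — and the main obstacle — is the computation of the cumulative $S^1$-degree of this basic pair on $\mathcal{W}_m^k$ for arbitrary $k$. I would handle this by an $S^1$-equivariant perturbation argument: pick a generic $\mathbb{C}$-linear operator $L:\mathcal{W}_m^k \to \mathcal{W}_m^k$ (which is automatically $S^1$-equivariant under the $m$-folding action) and consider
\[
b_\epsilon(t,v) := (1 - \|v\|_{\mathcal H} + it)v + \epsilon L v,
\]
which is $\mathscr{O}$-admissibly $S^1$-homotopic to $b_{m,j}$. For small $\epsilon$ and generic $L$, $b_\epsilon$ becomes regular normal on $\mathscr{O}_{m,j}^H$, with exactly $k$ isolated $S^1$-orbits of zeros (each with isotropy $\mathbb{Z}_m$), all crossing the zero set with the same orientation; invoking the Normalization axiom of the $S^1$-degree for each such orbit and summing gives $\deg_{S^1}(b_\epsilon, \mathscr{O}^H) = k$, and hence the same for $b_{m,j}^H$ by Homotopy. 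An equivalent route — which I would use as a cross-check — is induction on $k$: the base case $k=1$ is the one-dimensional normalization giving $(\mathbb{Z}_m)$, and the inductive step follows from the Additivity axiom applied to a splitting of $\mathscr{O}^H$ into a tubular neighborhood of $\mathcal{W}_m^{k-1} \subset \mathcal{W}_m^k$ and its complement. Combining this cumulative $S^1$-degree value with the recurrence \eqref{def:twisted_degree_recurrence} produces the formula claimed in the lemma.
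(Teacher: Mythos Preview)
Your proposal is correct and follows exactly the route the paper indicates: the paper states (without further detail) that the lemma ``can be derived from the recurrence formula \eqref{def:twisted_degree_recurrence},'' and your argument carries out precisely that derivation by identifying the single nontrivial ingredient, namely $\deg_{S^1}\bigl((b_{m,j})^H,(\mathscr O_{m,j})^H\bigr)=\tfrac{1}{2}\dim \mathcal V_{m,j}^H$, and computing it via a generic $\bbC$-linear perturbation (or equivalently induction on the complex dimension). One small remark: the isotropy of the perturbed zero orbits need not literally be $\bz_m$ (it depends on the residual $S^1$-action through $W(H)$), but since the cumulative $S^1$-degree sums all coefficients $d_s$ this does not affect your count, so the conclusion stands.
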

Let $a: S^1 \rightarrow GL^{G}(\mathcal H)$ be a continuous family of $G$-equivariant invertible linear operators 
We are interested in a computational formula for the twisted $G$-equivariant Leray-Schauder degree of an admissible $G$-pair $(T,\mathscr D) \in \mathcal M^{G}_1(\mathcal H)$ consisting of the $G$-invariant set
\[
\mathscr D := \{ (\lambda,v) \in \bc \times \mathcal H : \| v \|_{\mathcal H} < 2, \; \frac{1}{2} < | \lambda | < 2 \},
\]
and the $\mathscr D$-admissible $G$-equivariant operator $T: \bc \times \mathcal H \rightarrow \br \times \mathcal H$ given by
\[
T(\lambda,v) := \left(\theta(\lambda,v), a\left(\frac{\lambda}{|\lambda|}\right)v\right),
\]
where $\theta: \overline{\mathscr D} \rightarrow \br$ is any $G$-invariant function satisfying
\begin{align} 
\begin{cases}
\theta(\lambda,0) < 0 \quad & \text{ for } |\lambda | = 2 \\
\theta(\lambda,u) > 0  \quad & \text{ for } |\lambda | = \frac{1}{2}.
\end{cases}    
\end{align}
For convenience, we adopt the notations
\begin{align}\label{def:Dmj}
 \mathscr D_{m,j} := \{(\lambda,v) \in \mathscr D : v \in \mathcal V_{m,j} \},   
\end{align}
and
\begin{align}\label{def:Tmj}
T_{m,j}(\lambda,v) := (1 - |\lambda|, a_{m,j}(\lambda) \cdot v), \quad a_{m,j}:= a|_{\mathcal H_{m,j}} : \mathcal H_{m,j} \rightarrow \mathcal H_{m,j}.  
\end{align}
We are now in a position to present, without proof, the first of a set of two essential tools, the so-called Splitting Lemma, for this purpose.
\begin{lemma}\label{lemm:splitting_lemma} \rm
Using the notations \eqref{def:Dmj} and \eqref{def:Tmj} and for any $m,m' > 0$, $j,j' \in \{0,1,\ldots,r\}$, one has
\[
\gdeg(T_{m,j} \times T_{m',j'},\mathscr D_{m,j} \times \mathscr D_{m',j'}) =  \gdeg(T_{m,j}, \mathscr D_{m,j}) + \gdeg(T_{m',j'}, \mathscr D_{m',j'})
\]
\end{lemma}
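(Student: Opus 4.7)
The strategy is to exploit the disjointness structure of the two factors' zero sets by constructing an admissible $G$-equivariant homotopy that separates the combined zero set into two $G$-invariant components, each localized at one factor's critical circle, then applying the additivity of the twisted $G$-equivariant degree.

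Since both $a_{m,j}$ and $a_{m',j'}$ are invertible at every $\lambda \in S^1$, the zero set of $T_{m,j} \times T_{m',j'}$ on $\mathscr D_{m,j} \times \mathscr D_{m',j'}$ sits entirely inside the critical locus $\{|\lambda_1|=|\lambda_2|=1,\ v_1 = v_2 = 0\}$. I would deform the pair of real-valued output components $(1-|\lambda_1|,\, 1-|\lambda_2|)$ through a continuous $G$-invariant family $(\phi_1(t,\cdot),\, \phi_2(t,\cdot))$, leaving the linear ``$a$-factors'' untouched, so that the joint zero locus of $(\phi_1(1,\cdot),\, \phi_2(1,\cdot))$ in the parameter bidisc is the disjoint union of two localized circles: one at $\{|\lambda_1|=1\} \times \{|\lambda_2|=c\}$ and the other at $\{|\lambda_1|=c'\} \times \{|\lambda_2|=1\}$ for suitable constants $c, c' \in (1/2, 2)\setminus\{1\}$. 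Admissibility on $\partial(\mathscr D_{m,j} \times \mathscr D_{m',j'})$ must be maintained throughout; on the radial faces $\{\|v_i\|=2\}$ this is automatic from the invertibility of the $a$-factors, while on the parameter faces the required signs of $(\phi_1,\phi_2)$ can be preserved by an appropriate interpolation. Letting $H_1$ denote the homotopy endpoint and $\Omega_1,\Omega_2$ disjoint $G$-invariant open neighborhoods enclosing the two components, the homotopy and additivity properties of $\gdeg$ yield
\[
\gdeg(T_{m,j} \times T_{m',j'}, \mathscr D_{m,j} \times \mathscr D_{m',j'}) = \gdeg(H_1, \Omega_1) + \gdeg(H_1, \Omega_2).
\]

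On $\Omega_1$ the coordinate $\lambda_2$ is pinned away from $S^1$, so $a_{m',j'}(\lambda_2)$ is an invertible $G$-equivariant linear operator on $\mathcal V_{m',j'}$; a further admissible $G$-homotopy trivializes the $(\lambda_2, v_2)$-dependence, and the product property of Lemma \ref{lemm:twisted_degree_product_property} combined with normalization reduces $\gdeg(H_1, \Omega_1)$ to $\gdeg(T_{m,j}, \mathscr D_{m,j})$. A symmetric argument identifies $\gdeg(H_1, \Omega_2)$ with $\gdeg(T_{m',j'}, \mathscr D_{m',j'})$, completing the proof. \emph{The main obstacle} is the separating homotopy itself: it must deform the joint zero locus from a $2$-dimensional critical torus into a disjoint pair of $1$-dimensional $G$-invariant circles while staying admissible on the parameter boundary throughout. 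The additive (rather than multiplicative) nature of the final formula is exactly what precludes a direct application of Lemma \ref{lemm:twisted_degree_product_property}, and reflects the cobordism-theoretic content of decomposing a product of two twisted $S^1$-loops into an additive contribution from each loop.
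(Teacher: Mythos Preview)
The paper explicitly presents this lemma \emph{without proof}: the sentence immediately preceding it reads ``We are now in a position to present, without proof, the first of a set of two essential tools, the so-called Splitting Lemma, for this purpose.'' There is therefore no argument in the paper against which to compare your proposal; the result is simply quoted from the equivariant degree literature (cf.\ \cite{AED,book-new}).

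As for your sketch on its own merits: the high-level strategy --- separate the zero locus by an admissible $G$-homotopy, apply additivity, then trivialize the redundant factor on each piece via Lemma~\ref{lemm:twisted_degree_product_property} --- is the standard route to splitting formulas of this kind and is correct in spirit. Two points deserve care. First, taken literally, the product $T_{m,j}\times T_{m',j'}$ maps $(\bc\times\mathcal V_{m,j})\times(\bc\times\mathcal V_{m',j'})$ to $(\br\times\mathcal V_{m,j})\times(\br\times\mathcal V_{m',j'})$, so the domain exceeds the range by \emph{two} real dimensions rather than one, and the twisted degree (axiomatized for maps $\br\times W\to W$) is not directly defined on the left-hand side; the paper's actual use of the lemma in Section~\ref{sec:computation_local_bif_inv} involves direct sums sharing a \emph{single} parameter $\lambda$, and you should make clear which convention you are working under before writing down the separating homotopy. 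Second, under the two-parameter reading you adopt, the joint zero locus of $(\phi_1,\phi_2)$ in the $4$-real-dimensional bidisc is generically a surface, not a pair of circles, so ``deforming the $2$-torus into two disjoint circles'' is dimensionally off as stated; what one actually does is arrange that on each piece the \emph{second} real output becomes a nonvanishing function paired with an invertible $G$-linear map on a complex isotypic component (hence $G$-homotopic to the identity through $GL^G$), so that the extra $(\lambda,v)$-block contributes the unit $(G)$ via the product property. Once these two issues are addressed, your outline goes through.
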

The second tool essential to the computation of $\gdeg(T,\mathscr D)$ provides a means to relate the twisted $G$-equivariant Leray-Schauder degrees $\gdeg(T_{m,j}, \mathscr D_{m,j}) \in A_1^t(G)$ with the corresponding basic degree $\deg_{\mathcal V_{m,j}}$.
\begin{lemma} \rm
Using the notations \eqref{def:Dmj} and \eqref{def:Tmj} and for any $m > 0$, $j \in \{0,1,\ldots,r\}$, one has
    \[
    \gdeg( T_{m,j}, \mathscr D_{m,j}) = \deg(\det\nolimits_\bc(a_{m,j})) \deg_{\mathcal V_{m,j}}.
    \]
\end{lemma}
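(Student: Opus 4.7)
The plan is to reduce $\gdeg(T_{m,j}, \mathscr D_{m,j})$ to the basic degree $\deg_{\mathcal V_{m,j}}$ via Schur's Lemma, together with the Homotopy and Additivity properties of the twisted $G$-equivariant degree, isolating the contribution of $\det_\bc a_{m,j}$ as a scalar winding number. Since $m > 0$, the irreducible $G$-representation $\mathcal V_{m,j} = \mathcal W_m \otimes \mathcal V_j$ carries a complex structure with $\operatorname{End}_G(\mathcal V_{m,j}) \cong \bc$, so (after the standard Leray--Schauder finite-dimensional reduction) I may fix an identification $\mathcal H_{m,j} \cong \bc^N \otimes_\bc \mathcal V_{m,j}$ under which $a_{m,j}(\lambda) = M(\lambda) \otimes \id_{\mathcal V_{m,j}}$ for a continuous family $M : S^1 \to GL_N(\bc)$. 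The winding number $d := \deg(\det_\bc a_{m,j})$ then equals $\deg(\det M)$, and since $GL_N(\bc)$ is path-connected with fundamental group generated by $\det$, the loop $M$ is homotopic inside $GL_N(\bc)$ to $\operatorname{diag}((\lambda/|\lambda|)^d, 1, \ldots, 1)$.

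By the Homotopy property of the twisted degree, I may therefore replace $T_{m,j}$ with the simplified $\mathscr D_{m,j}$-admissible map
\[
T^{(d)}(\lambda, v_1, \ldots, v_N) := \bigl(1 - |\lambda|,\; (\lambda/|\lambda|)^d v_1,\; v_2, \ldots, v_N\bigr),
\]
whose zero set reduces to the orbit $\{(\lambda, 0) : |\lambda| = 1\}$. I would then invoke Additivity and Excision to shrink $\mathscr D_{m,j}$ to a $G$-invariant neighborhood of this orbit of product form $\mathscr D' \times B(\mathcal V_{m,j}^{N-1})$, and apply the product property (Lemma \ref{lemm:twisted_degree_product_property}) to factor the degree as
\[
\gdeg(T_{m,j}, \mathscr D_{m,j}) = \Gamma\text{\rm -deg}\bigl(\id, B(\mathcal V_{m,j}^{N-1})\bigr) \cdot \gdeg\bigl((\lambda, v) \mapsto (1-|\lambda|, (\lambda/|\lambda|)^d v),\; \mathscr D'\bigr).
\]
Since $\Gamma\text{\rm -deg}(\id, B(\mathcal V_{m,j}^{N-1})) = (\Gamma)$ acts as the identity element of the $A(\Gamma)$-module $A_1^t(G)$, this reduces the entire computation to the multiplicity-one case on $\bc \times \mathcal V_{m,j}$.

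For the remaining base case, I must show that $\gdeg((\lambda, v) \mapsto (1-|\lambda|, (\lambda/|\lambda|)^d v),\; \mathscr D') = d \cdot \deg_{\mathcal V_{m,j}}$. When $d = 1$, I would construct an explicit admissible $G$-equivariant homotopy linking this single-copy map to the basic pair $(b_{m,j}, \mathscr O_{m,j})$; passing to polar coordinates $\lambda = re^{i\theta}$ and $v = \rho \hat v$ suggests interpolating so as to exchange the roles of the pair $(r-1, \theta)$ and the pair $(t, \rho-1)$, keeping the zero set confined to a single $G$-orbit throughout. For general $d$, I would use the $d$-fold self-cover $S^1 \to S^1$, $z \mapsto z^d$, together with the Recurrence Formula \eqref{def:twisted_degree_recurrence} and the corresponding scaling of $\deg_{S^1}$ on each fixed-point subspace $\mathcal V_{m,j}^H$ to multiply every coefficient $n_H$ by $d$. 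The main obstacle is this final base-case identification, because the single-copy map and the basic pair a priori live on the distinct domains $\bc \times \mathcal V_{m,j}$ and $\br \times \mathcal V_{m,j}$ respectively, so producing the required $G$-equivariant homotopy demands a careful reparametrization that preserves admissibility on the boundary throughout.
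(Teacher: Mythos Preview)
The paper does not prove this lemma: it is the second of ``two essential tools'' in Appendix~\ref{sec:appendix_twisted_comp_form} that are explicitly presented \emph{without proof}, the authors deferring to the monographs \cite{AED,book-new}. So there is no paper proof to compare against; I can only assess your sketch on its own merits.

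Your strategy is the standard one and is sound in outline: Schur's Lemma identifies $\operatorname{End}_G(\mathcal V_{m,j})\cong\bc$, so $a_{m,j}$ is determined by a loop $M:S^1\to GL_N(\bc)$; since $\pi_1(GL_N(\bc))\cong\bz$ via $\det$, you may homotope to the diagonal model and peel off the identity factors. Two points deserve more care than you give them. First, Lemma~\ref{lemm:twisted_degree_product_property} as stated applies to a $\Gamma$-pair $(\phi,D)\in\mathcal M^\Gamma$, but your factor $(\id,B(\mathcal V_{m,j}^{N-1}))$ carries a nontrivial $S^1$-action and is not a $\Gamma$-pair; what you actually need is the \emph{suspension invariance} of the twisted degree (i.e.\ $\gdeg(f\times\id_W,\Om\times B(W))=\gdeg(f,\Om)$ for any orthogonal $G$-representation $W$), which is true but is a separate statement. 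Second, your handling of general $d$ via the $d$-fold cover and the recurrence formula is vague; the clean route is to observe that in $\bc^*$ the loop $z\mapsto z^d$ is homotopic rel basepoint to a $d$-fold concatenation of the generating loop, then use additivity of the twisted degree over a decomposition of the annulus $\{1/2<|\lambda|<2\}$ into $|d|$ angular sectors, each of which contributes $\pm\deg_{\mathcal V_{m,j}}$ by the $d=\pm 1$ base case. Your acknowledged ``main obstacle''---matching the single-copy map on $\bc\times\mathcal V_{m,j}$ with the basic pair on $\br\times\mathcal V_{m,j}$---is resolved by the same suspension principle (the extra trivial $\br$ factor in $\bc\cong\br\times\br$ does not change the degree), after which the required homotopy in polar coordinates is straightforward.
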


\end{document}